\documentclass{article}
\usepackage[utf8]{inputenc}
\usepackage{mathrsfs}
\usepackage{authblk}
\usepackage{mathtools}
\usepackage[colorinlistoftodos]{todonotes}  

\usepackage{comment}
\usepackage{amsmath}
\usepackage{amssymb}
\usepackage{amsthm}
\usepackage{mathrsfs}

\numberwithin{equation}{section}

\usepackage[left=3cm, right=3cm]{geometry}

\usepackage[T1]{fontenc}
\usepackage[utf8]{inputenc}
\usepackage[english]{babel}
  \usepackage{lmodern}  
\usepackage{graphicx}
\usepackage{subfigure}
\usepackage{tikz}
\usepackage{makeidx}
\makeindex

\usepackage{multicol}
\usepackage{float}
\usepackage{tikz,pgf}
\usepackage{tikz-cd}
\usepackage{float}
\usepackage{xcolor}
\usepackage[all,knot,arc,color,web]{xy}
\usepackage{graphicx}
\xyoption{curve}
\usepackage{bbold}
\usepackage{fancyhdr}
\usepackage{stmaryrd}
\usepackage{hyperref}
\theoremstyle{plain}
\newtheorem{theorem}{Theorem}[section]
\newtheorem*{theorem*}{Theorem}
\newtheorem{proposition}[theorem]{Proposition}
\newtheorem*{proposition*}{Proposition}
\newtheorem{corollary}[theorem]{Corollary}
\newtheorem{lemma}[theorem]{Lemma}
\newtheorem{definition}[theorem]{Definition}

\theoremstyle{definition}
\newtheorem{remark}[theorem]{Remark}

\newtheorem{notation}[theorem]{Notation}

\newtheorem*{theoremA}{Theorem A}
\newtheorem*{theoremB}{Theorem B}
\newtheorem*{theoremC}{Theorem C}
\newtheorem*{theoremD}{Theorem D}

\newcommand{\luca}[1]{\todo[size=\tiny, backgroundcolor=green, linecolor=green]{LUCA: #1}}

\newcommand{\olga}[1]{\todo[size=\tiny]{OLGA: #1}}

 \title{Generic properties of planar symplectic billiards}
\author[1]{Luca Baracco}
\author[1]{Olga Bernardi}
\author[2]{Jos\'e Pedro Gaivão}
\affil[1]{Dipartimento di Matematica Tullio Levi--Civita, Università di Padova, Italy}
\affil[2]{ISEG, Universidade de Lisboa, Portugal}
\date{\today}

\begin{document}

\maketitle

\abstract{\noindent We study generic properties of planar symplectic billiards, a symplectic analogue of classical Birkhoff billiards 
introduced by P. Albers and S. Tabachnikov. We prove that, for a residual set of $C^\infty$ strongly convex domains, periodic symplectic billiard trajectories are in general position and non-degenerate. We also prove a Franks' lemma for symplectic billiards and deduce that stably hyperbolic periodic points form a hyperbolic set given by the closure of the set of hyperbolic periodic points. We further show that, for a residual set of $C^\infty$ strongly convex domains, elliptic periodic points are stable and hyperbolic periodic points have transverse homoclinic points. Based on these results, we conclude that --generically-- $C^\infty$ strongly convex symplectic billiard has positive topological entropy.}


\section{Introduction}

\noindent The study of billiards occupies a central place in dynamical systems, geometry, and mathematical physics. Given a bounded domain $\Omega\subset\mathbb{R}^2$ with 
boundary $\Gamma=\partial\Omega$, the billiard flow describes the motion of a point particle moving with unit speed inside $\Omega$ and undergoing elastic reflections at the boundary according to the law of equal angles.  Recording only successive reflections, we obtain the so-called billiard map, an exact area-preserving twist map encoding the essential dynamical information of the billiard flow, see e.g. \cite{Sergei} and \cite[Chapter 3]{Siburg}. Many fundamental questions in billiard dynamics, including the existence and stability of periodic trajectories, invariant curves, hyperbolic sets, and positive entropy, can be formulated in terms of the billiard map. Since the pioneering works of Birkhoff, billiards have provided a rich source of examples exhibiting a wide range of dynamical phenomena, ranging from complete integrability to hyperbolicity and chaos. 
In recent years, substantial progress has been achieved in understanding the generic properties of convex billiards \cite{lazutkin1981convex,Gruber1990}, including generic properties of periodic orbits \cite{Sto87,PeSto,PetkovStojanov1987b,CARNEIROKAMPHORSTCARVALHO2003}, transverse intersections of invariant manifolds \cite{Donnay2005,DCOKPC07}, and positive topological entropy for generic convex billiards \cite{Cheng2004,BDDGT24}. \\
~\newline
\noindent Symplectic billiards, introduced in 2018 by P. Albers and S. Tabachnikov \cite{AT}, provide a symplectic counterpart of the classical Birkhoff billiard. Whereas the classical billiard map is generated by the Euclidean length functional, the symplectic billiard map is generated by the symplectic area functional. An equivalent description of symplectic billiards, with a more geometric flavour, is the following: given any three consecutive points $x,y,z\in\Gamma$ of a symplectic billiard trajectory, the tangent line to $\Gamma$ at $y$ is parallel to the chord $xz$. 
In \cite{AT} Albers and Tabachnikov developed the basic theory of symplectic billiards, establishing their variational framework, proving the existence of periodic orbits, deriving an analogue of Lazutkin's invariant curve theorem, and revealing both similarities and differences with respect to classical Birkhoff billiards. Since then, several questions regarding symplectic billiards have attracted attention. Rigidity and integrability questions have been studied in analogy with the classical Birkhoff conjecture; see, for example, \cite{BaBe}, \cite{BBN2}, \cite{BBN1}, \cite{Tsod}, \cite{BBN3} and \cite{FSV}. Moreover, several questions related to polygonal symplectic billiards have also studied in \cite{Albers}, inspired by numerical investigations, and connections with Minkowski billiards have been exploited in \cite{ACT26}. Finally, in \cite{BBFN}, the authors have addressed the attention to dissipative symplectic billiards in terms both of the strength of dissipation and of the shape of the table. All these developments suggest that symplectic billiards provide a rich class in which one may revisit the classical questions of billiard dynamics. In contrast, generic dynamical properties of symplectic billiards remain largely unexplored. \\
~\newline
\noindent The purpose of this paper is to initiate a systematic study of generic dynamics in planar symplectic billiards. By a generic property, we mean a property that holds on a residual subset of strongly convex domains endowed with the $C^\infty$ topology. Our guiding principle is that many of the fundamental generic properties known for classical billiards should admit symplectic counterparts, although their proofs require substantial and non-trivial modifications due to the different reflection law and variational formulation. \\
~\newline
\noindent After a brief introduction to symplectic billiards and recalling some basic facts in Section \ref{INTRO DEF}, we start our study with periodic orbits. In the classical billiard setting, generic positions of periodic orbits were investigated by V. Petkov and L. Stojanov (see \cite{Sto87} and \cite{PeSto}), who proved that, generically, periodic billiard trajectories have ``zero defect'' --that is do not pass through the same reflection point two or more times-- and different periodic orbits do not share reflection points. The core of Section \ref{section 3} is proving the symplectic analogues of Petkov and Stojanov's generic theorems: 
\begin{theoremA}
\textit{$(a)$ For a residual set of smooth strongly convex domains, no periodic symplectic billiard orbit passes --within one period-- through the same point more than once. \\
$(b)$ For a residual set of smooth strongly convex domains, each pair of distinct periodic symplectic billiard orbits have no common reflection points.}  
\end{theoremA}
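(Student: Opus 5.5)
We follow the Petkov--Stojanov scheme: a countable intersection of open dense sets, with density obtained from a transversality argument applied to the generating function. Parametrize a strongly convex curve $\Gamma$ by its support function (or by the angle of the tangent line), so $C^\infty$ strongly convex domains form an open subset of $C^\infty(S^1)$. A period-$n$ symplectic billiard orbit is a critical point of the area functional
\[
L_\Gamma(x_1,\dots,x_n)=\sum_{i=1}^n \omega(\gamma(x_i),\gamma(x_{i+1})),\qquad x_{n+1}=x_1,
\]
restricted to the open set of configurations with $x_i\neq x_{i+1}$. Equivalently, $\gamma'(x_i)$ is parallel to $\gamma(x_{i+1})-\gamma(x_{i-1})$. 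For each $n$ and each compactness level $k$, let $\mathcal U_{n,k}$ be the set of domains for which every $n$-periodic configuration with consecutive points at distance at least $1/k$ satisfies the required property. For (a) this means $x_i\neq x_j$ for $i\neq j$; for (b) it means that two such configurations of periods $n,m$ are either the same orbit up to shift or share no point. We prove that $\mathcal U_{n,k}$ is open and dense, and the residual set is then $\bigcap_{n,k}\mathcal U_{n,k}$.

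Openness comes from compactness and continuity of the critical set in $\Gamma$. We would first establish, in the same inductive way, that generically all periodic orbits are nondegenerate, since this is also proved in this section. Nondegenerate critical points persist under small perturbations and vary continuously. A nondegenerate orbit that avoids self-intersection, or meets another orbit nowhere, keeps that property nearby, and orbits near the boundary of the compact set are controlled by the $1/k$ margin.

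For density in (a), take an orbit with $x_i=x_j$, $i<j$, and consider the map $F(\Gamma,x)=(\nabla_x L_\Gamma(x),\,x_i-x_j)$ on configurations. We want $0$ to be a regular value of $F$ jointly in $(\Gamma,x)$ over a finite-dimensional family of perturbations $\Gamma_\epsilon=\Gamma+\sum\epsilon_\ell\phi_\ell$. Since the target has dimension $n+1$ and the domain has dimension $n$, the parametric transversality theorem then gives $F_\Gamma^{-1}(0)=\emptyset$ for a dense set of $\Gamma$. For (b), use the pair configuration $(x,y)\in (S^1)^n\times(S^1)^m$ and the map $(\nabla L(x),\nabla L(y),x_i-y_j)$. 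The case where the two orbits coincide as sets is excluded by working modulo orbits with distinct point sets. That case is handled by (a) together with the observation that two orbits sharing two consecutive points coincide, since the map is determined by two consecutive points.

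The main obstacle is surjectivity of the derivative in the $\Gamma$-direction. We must build perturbations $\phi$ supported near chosen boundary points whose effect on $\partial_{x_l}L$ is independent. The derivative of $\partial_{x_l}L$ in $\Gamma$ involves $\phi$ and $\phi'$ at $x_l$, coupled to neighbours through $\omega(\gamma'(x_l),\gamma(x_{l+1})-\gamma(x_{l-1}))$. The difficulty is the repeated point $p=x_i=x_j$: a perturbation localized near $p$ changes both equations $\partial_{x_i}L$ and $\partial_{x_j}L$ simultaneously. We must show they move independently, using that the chords $x_{i-1}x_{i+1}$ and $x_{j-1}x_{j+1}$ are distinct. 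Genuinely distinct chords force the pairs $\{x_{i\pm1}\}$ and $\{x_{j\pm1}\}$ to differ, and otherwise the orbit retraces itself, which reduces to a shorter period. Where the chords differ, we perturb near the neighbours $x_{i+1}$ versus $x_{j+1}$ and obtain independent first-order effects. The delicate subcase is when a neighbour point also coincides with another one. We plan to handle this by induction on the number of coincidences, choosing a point of the orbit visited exactly once whenever one exists, and treating the fully degenerate case (every point visited at least twice) through the shorter-period reduction.
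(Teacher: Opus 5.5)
\paragraph{Verdict: genuine gap in the density argument for (a).} You impose only the single coincidence $x_i=x_j$ and work on all of $(S^1)^n$. That means $0$ must be a regular value of $F=(\nabla_xL,\,x_i-x_j)$ also at zeros that carry further coincidences, and there the perturbations of $\Gamma$ are too weak.

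Take a boundary point $p$ visited at the indices $l\in V_p$. Every chord $\gamma(x_{l+1})-\gamma(x_{l-1})$ is parallel to $\gamma'(p)$. So varying $\gamma'(p)$ moves the whole block $(\partial_{x_l}L)_{l\in V_p}$ along one fixed direction, and varying $\gamma(p)$ adds at most two more directions. Hence the $\Gamma$-derivative of $\nabla_xL$ has rank at most $3s$, where $s$ is the number of distinct points. When $n>3s$, the missing directions would have to come from the Hessian of $L$ at an uncontrolled configuration, and nothing in your plan controls it.

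Such patterns are not excluded a priori, and your fallback for the fully degenerate case is false. An orbit in which every point is visited at least twice need not have a shorter period; a shorter period is forced only by a repeated \emph{oriented} edge. For example, take five points equally spaced on a circle and the cyclic pattern $1,2,3,4,5,1,3,5,2,4$:
\begin{itemize}
\item it uses ten distinct edges;
\item every step advances by $72^\circ$ or $144^\circ$, so it is admissible in the positive phase space;
\item every point is visited twice;
\item its minimal period is $10$.
\end{itemize}
An Eulerian circuit of the circulant tournament on nine points with steps $+1,\dots,+4$ gives, in the same way, $n=36>27=3s$.

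\paragraph{The missing idea (the paper's route).} Fix the full combinatorial type $\sigma$. Then the parameters $t_1,\dots,t_s$ are pairwise distinct, the domain is the $s$-dimensional set $\mathbb{S}^{(s)}$, and the Multijet Transversality Theorem lets you vary jets at these distinct points independently. You then ask for only $s+1$ independent conditions instead of $n+1$.
\begin{itemize}
\item For each point $k\neq 1$, keep only the summed equation $\sum_{j\in\sigma^{-1}(k)}\omega(v_k,x_{\sigma(j+1)}-x_{\sigma(j-1)})=0$. Varying $v_k$ moves this equation and no other. Its derivative is nonzero because all these chords are positive multiples of $\gamma'(t_k)$: successors lie on the arc from $\gamma(t_k)$ to $\gamma(t_k^*)$ and predecessors on the other arc, so the chords cannot cancel.
\item At the point $t_1$ visited $r\ge 2$ times, keep the $r$ separate reflection conditions $\omega(v_1,x_{\sigma(j_h+1)}-x_{\sigma(j_h-1)})=0$. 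These are independent via the positions of the successors $x_{\sigma(j_h+1)}$. The successors are pairwise distinct and distinct from all predecessors precisely because no unoriented edge repeats; this is your own ``retracing'' observation, used for the right purpose.
\end{itemize}
The Jacobian is then block triangular with invertible diagonal blocks. So the bad set has codimension $s-1+r\ge s+1$, and transversality of the $s$-dimensional multijet map forces empty intersection; the same device gives (b).

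This route needs neither the Hessian nor an induction on coincidences, and it yields residuality directly. It also avoids your openness step through nondegeneracy, which is delicate as stated: the paper's nondegeneracy theorem is itself proved only for orbits in general position, so relying on it here risks circularity.
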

\noindent We refer to Theorems~\ref{T1} and Theorem~\ref{T2} respectively. The proofs combine the variational characterization of periodic orbits with the Multijet Transversality Theorem. \\
With similar techniques, in Section \ref{section 4} we establish generic non-degeneracy of periodic symplectic billiard orbits. More precisely, the main results of this section are the next ones.
\begin{theoremB}
\textit{$(a)$ For a residual set of smooth strongly convex domains, the corresponding symplectic billiard dynamics admits only non-resonant elliptic periodic points. \\
$(b)$ For a residual set of smooth strongly convex domains, the corresponding symplectic billiard dynamics admits only a finite number of periodic orbits for each period $n \ge 3$ and they are all non-degenerate. \\
$(c)$ For a residual set of smooth strongly convex domains, the corresponding symplectic billiard dynamics admits only periodic orbit with at least one non self-conjugate point.}   
\end{theoremB}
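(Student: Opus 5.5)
We will prove the three parts of Theorem B by the same scheme as Theorem A: combine the variational characterization of periodic orbits with the Multijet Transversality Theorem. We parametrize the boundary of a strongly convex domain by a $C^\infty$ embedding $\gamma:\mathbb{T}\to\mathbb{R}^2$. For each $n\ge 3$ we consider the generating function
\[
L_n(t_1,\dots,t_n)=\sum_{i=1}^n \omega\bigl(\gamma(t_i),\gamma(t_{i+1})\bigr),
\]
with indices mod $n$. By Theorem A we may restrict to orbits with pairwise distinct reflection points. Periodic orbits are critical points of $L_n$ on the open set $\{t_i\neq t_j\}$. Since $\partial L_n/\partial t_i=\omega(\gamma(t_{i+1})-\gamma(t_{i-1}),\gamma'(t_i))$ depends only on the first jet of $\gamma$ at $t_i$, the Hessian depends only on the $2$-jets. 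The Hessian of $L_n$ is a periodic tridiagonal (Jacobi) matrix whose off-diagonal entries $\omega(\gamma'(t_i),\gamma'(t_{i+1}))$ are nonzero by strong convexity. Standard twist-map theory then gives that the monodromy $M$ of the orbit satisfies
\[
\det \mathrm{Hess}\, L_n=\pm\bigl(\operatorname{tr}M-2\bigr)\prod_i b_i,
\]
and more generally that $\det(\mathrm{Hess}-\lambda\text{-twisted})$ encodes $\operatorname{tr}M-2\cos\theta$ through the Floquet characteristic polynomial.

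The key steps are as follows.

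\textbf{Step 1: the multijet map and its bad set.} For each $n$ we define the multijet map $\Phi_\gamma:(t_1,\dots,t_n)\mapsto (j^2\gamma(t_1),\dots,j^2\gamma(t_n))$ on the off-diagonal configuration space. The "bad" conditions form a subset $\Sigma$ of the multijet space: being a critical point gives $n$ equations, and degeneracy gives one extra equation, $\operatorname{tr}M=2$. We will show that $\Sigma$ is a finite union of submanifolds of codimension $>n$. For this it suffices to check that these $n+1$ functions are independent as functions of the jet variables. This holds because $\gamma''(t_i)$ enters the diagonal entry $a_i=\omega(\gamma(t_{i+1})-\gamma(t_{i-1}),\gamma''(t_i))$ linearly and freely, while it does not affect the first-order criticality equations. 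Hence $\partial(\operatorname{tr}M)/\partial a_i$ must be shown nonzero for some $i$; it is a cofactor of the Jacobi matrix, and we expect it to be nonzero at a degenerate point.

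\textbf{Step 2: parts (b) and (a).} Multijet transversality then makes $\Phi_\gamma$ transverse to $\Sigma$ for residual $\gamma$, so $\Phi_\gamma^{-1}(\Sigma)=\emptyset$ by dimension count. This proves non-degeneracy. Finiteness for each $n$ follows from non-degeneracy, because critical points are isolated and cannot accumulate on the diagonal; Theorem A together with a uniform lower bound on chord lengths near collisions for strongly convex curves prevents this. Part (a) is handled in the same way. For each root of unity $e^{2\pi i p/q}$ with $q\le 4$, resonance is the condition $\operatorname{tr}M=2\cos(2\pi p/q)$, again one extra equation affine in each $a_i$. Taking the countable intersection over $n,p,q$ keeps the set residual.

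\textbf{Step 3: part (c).} Here we will interpret a "self-conjugate point" via the twist structure: $t_i$ is self-conjugate when the appropriate off-diagonal cofactor of the Hessian vanishes. Requiring this at all $n$ points imposes $n$ further equations. It even suffices to show that two of them, for example at consecutive $i$, are independent modulo criticality by varying $a_i$ and $a_{i+1}$, which already gives codimension $>n$.

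\textbf{Main obstacle.} The hard step is the independence claim in Step 1 and its analogues: proving that the relevant polynomial in $(a_1,\dots,a_n)$, namely $\operatorname{tr}M$ minus a constant or a cofactor, has nonvanishing gradient on the zero set. Our first attempt will use the continuant recursion, which gives $\partial_{a_i}\operatorname{tr}M$ as the continuant of the chain with $i$ removed. If it vanished for every $i$ at a degenerate point, the Jacobi recursion would force a zero off-diagonal $b_i$, contradicting strong convexity. The other delicate point is checking that the symplectic reflection law leaves the $2$-jets genuinely free, and that prescribing $j^2\gamma(t_i)$ at distinct points is compatible with strong convexity. This should follow as in Theorem A by local perturbations supported near each $t_i$.
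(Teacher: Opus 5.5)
Your overall scheme is the paper's: the $n$ criticality equations $\omega(v_i,x_{i+1}-x_{i-1})=0$ plus one determinant condition on the $n$-fold $2$-jet bundle, followed by multijet transversality. The key claim of Step 1 is false, however. The $n+1$ defining functions are \emph{not} independent at every point of $\Sigma$, and the mechanism you propose (simultaneous vanishing of all $\partial_{a_i}\operatorname{tr}M$ at a degenerate point forces some off-diagonal entry to vanish) breaks down.

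Write $g=\det\mathcal{H}$ for the symmetric periodic Jacobi matrix (your Hessian). Since $g$ depends on the jets only through the entries of $\mathcal{H}$, we have $dg=\operatorname{tr}(\operatorname{adj}(\mathcal{H})\,d\mathcal{H})$. In particular, $\partial_{a_i}g$ equals the principal cofactor $\det\mathcal{K}^{(i)}$ ($\mathcal{H}$ with row and column $i$ deleted) times the nonzero covector $a\mapsto\omega(a,x_{i+1}-x_{i-1})$, and it vanishes iff $t_i$ is self-conjugate. When the monodromy is the identity, every Jacobi field is $n$-periodic. Then $\dim\ker\mathcal{H}=2$ and $\operatorname{adj}(\mathcal{H})=0$, so every point is self-conjugate and the \emph{entire} gradient of $g$ vanishes, even though every off-diagonal entry $l_i$ (your $b_i$) is nonzero.

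Such jets exist. Keep the positions and unit tangents of the regular pentagon orbit in the unit circle, but prescribe curvature $\cos(2\pi/5)>0$ at the five vertices; these are $2$-jets of strongly convex curves. Then $B_i=-2\cos(2\pi/5)\sin(2\pi/5)$ and $l_i=\sin(2\pi/5)$. The one-step transfer matrix is $\bigl(\begin{smallmatrix}2\cos(2\pi/5)&-1\\ 1&0\end{smallmatrix}\bigr)$, which is conjugate to rotation by $2\pi/5$, so its fifth power is $\mathrm{Id}$. Your continuant argument is valid only when $\dim\ker\mathcal{H}=1$: then $\operatorname{adj}(\mathcal{H})=c\,uu^{T}$ with $c\neq0$ and $u$ spanning the kernel, so $\partial_{a_i}g\neq0$ whenever $u_i\neq0$.

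So $\Sigma$ is singular along $\{\operatorname{rank}\mathcal{H}\le n-2\}$, and that stratum needs separate treatment. The paper sidesteps pointwise independence altogether. The equations are analytic, $\{f_i=0\}$ is a codimension-$n$ submanifold on which the $a_i$ are free, and on each fibre $g$ is a polynomial in $(a_1,\dots,a_n)$ that is not identically zero (the monomial $\prod_i\omega(a_i,x_{i+1}-x_{i-1})$ has nonzero coefficient). Hence the analytic set $\Sigma_n$ is a union of submanifolds of codimension $\ge n+1$.

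Within your framework you can instead stratify by rank. The set $\{\operatorname{rank}\mathcal{H}=n-1\}$ is a codimension-$(n+1)$ submanifold by the adjugate argument. The set $\{\operatorname{rank}\mathcal{H}\le n-2\}$ is contained in $\{\det\mathcal{K}^{(1)}=\det\mathcal{K}^{(2)}=0\}$, which has codimension $n+2$ by exactly your Step 3 computation. There independence does hold everywhere. The non-periodic tridiagonal matrix $\mathcal{K}^{(1)}$ has at most one-dimensional kernel, spanned by a vector $u$ with $u_2\neq0$. Hence the minor obtained by deleting index $2$ from $\mathcal{K}^{(1)}$, namely the entry $c\,u_2^{2}$ of $\operatorname{adj}(\mathcal{K}^{(1)})$, is nonzero. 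Both $\partial_{a_2}\det\mathcal{K}^{(1)}$ and $\partial_{a_1}\det\mathcal{K}^{(2)}$ are nonzero covectors times this same minor, while $\det\mathcal{K}^{(1)}$ does not involve $a_1$ and $\det\mathcal{K}^{(2)}$ does not involve $a_2$.

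Three smaller points:
\begin{itemize}
\item In Step 3 the relevant object is this principal cofactor, not an off-diagonal one, and a single such equation already gives codimension $n+1$.
\item For (a) your pointwise claim is correct, since an elliptic monodromy has no real invariant line and hence no self-conjugate point. However, you must use all $q\ge3$, not only $q\le4$, to get full non-resonance.
\item Finiteness should be argued in phase space, as in the paper: every $n$-periodic orbit meets a fixed compact subset of the open cylinder, because an orbit staying close to either boundary component cannot close up after $n$ steps. Your collision-based argument ignores the boundary component where $t_{i+1}$ approaches $t_i^{*}$.
\end{itemize}
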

\noindent See respectively Theorem~\ref{nonresonant}, Corollary ~\ref{finite and nondeg} and Theorem~\ref{T4}. These results are the starting point for the perturbative analysis developed later in the paper and may be regarded as a symplectic counterpart of the corresponding genericity statements known for classical billiards. \\
~\newline
\noindent Another classical result in generic perturbations of dynamical systems is the so-called Franks' lemma. Originally introduced by J. Franks in the context of diffeomorphisms \cite[Lemma 1.1]{franks71}, this lemma allows to realize prescribed perturbations of the derivative along a finite orbit by small perturbations of the diffeomorphism itself. Analogous results have proved extremely useful in conservative and Hamiltonian dynamics \cite{AlishahDias14,Buzzi_2017}. In Section \ref{section 5} we establish a Franks' lemma for planar symplectic billiards: 
\begin{theoremC}
\textit{There exists a residual set of smooth strongly convex domains such that, for every periodic orbit $\mathcal{O}$ inside one of these domains $\Gamma$ of period at least $3$ and for every $C^2$ neighborhood $U$ of $\Gamma$, there exists an open ball $B \subset Sp(1)$ centered in the linearized dynamics along $\mathcal{O}$ such that any element of $B$ is realizable as the linearized dynamics around $\tilde{\Gamma}$ for some $\tilde{\Gamma} \in U$. Moreover, such a perturbation can be supported in an arbitrarily small neighborhood of three successive bouncing points of $\mathcal{O}$.}
\end{theoremC}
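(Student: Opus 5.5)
We would prove Theorem C with the generating function formalism of symplectic billiards. Parametrize $\Gamma$ by $\gamma(t)$ and let $S(t,s)=\omega(\gamma(t),\gamma(s))$ be the area generating function. A periodic orbit $\mathcal{O}=(t_1,\dots,t_n)$ is a critical point of $\sum_i S(t_i,t_{i+1})$.

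\textbf{Linearization.} We compute the linearized dynamics along $\mathcal{O}$ from the second derivatives. These are $a_i=\partial_1\partial_2 S(t_i,t_{i+1})$, which is nonzero by strong convexity (twist condition), and $b_i=\partial_2^2S(t_{i-1},t_i)+\partial_1^2S(t_i,t_{i+1})$. The differential of the billiard map at the $i$-th point is then, in suitable coordinates, a product of elementary matrices of the form
\[
\begin{pmatrix} -b_i/a_i & -a_{i-1}/a_i\\ 1 & 0\end{pmatrix}.
\]
The monodromy $M\in Sp(1)$ is the ordered product of these matrices.

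\textbf{Perturbation.} We perturb $\Gamma$ near three successive points $\gamma(t_{j-1}),\gamma(t_j),\gamma(t_{j+1})$. The perturbation is a normal deformation $\gamma+\varepsilon\,\phi\,N$, where $\phi$ is supported in small disjoint neighbourhoods of these parameters. We require $\phi$ to vanish to first order at $t_{j-1},t_j,t_{j+1}$, so that the points of $\mathcal{O}$ and their tangent lines are preserved. Then $\mathcal{O}$ remains an orbit of the perturbed table, since the reflection law only involves tangent directions and chords.

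The remaining freedom is the curvature change $\phi''(t_k)$ at $k=j-1,j,j+1$, which is small in $C^2$ because the support is small. A change of curvature at $t_k$ only modifies the diagonal term $b_k$, because $a_i$ depends only on $\gamma,\gamma'$ at the two endpoints. The monodromy therefore becomes a smooth map
\[
(\delta_{j-1},\delta_j,\delta_{j+1})\mapsto M(\delta)\in Sp(1), \qquad M(0)=M.
\]
Each $b_k$ enters through a factor $\begin{pmatrix}1&0\\ \ast&1\end{pmatrix}$-type shear in suitable coordinates.

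\textbf{Submersion.} We show that this map is a submersion at $0$. Its differential is spanned by $M\cdot X_k$, where $X_k=A_k\,E\,A_k^{-1}$ are conjugates of a fixed nilpotent $E$ by partial products $A_k$. The matrices $A_k$ are products of $0,1,2$ consecutive factors of the orbit.

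Three such nilpotents span $\mathfrak{sp}(1)$ exactly when the three lines $A_k^{-1}$-images of $\ker E$ are pairwise distinct. Concretely, this means the consecutive partial products send the vertical direction to three distinct lines. For twist maps, consecutive images of the vertical are always distinct, because $a_i\neq0$. Distinctness of the first and third image is where genericity enters: it is a nondegeneracy condition on $\mathcal{O}$ resembling the "non self-conjugate point" condition of Theorem B(c). We would take the residual set to be the one from Theorems B(b),(c) (Theorem~\ref{T4}), possibly choosing the three successive points around a non self-conjugate point.

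Once the differential is onto, the inverse function theorem (or a local surjectivity argument on a 3-dimensional parameter) gives an open ball $B$ around $M$ in $Sp(1)$ realized by small perturbations. These perturbations are $C^2$-small once $|\delta|$ is small and the supports are fixed, so $\tilde\Gamma\in U$ and strong convexity is preserved.

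\textbf{Main obstacle.} We expect the main obstacle to be the spanning step: converting the algebraic condition on the three directions into exactly the generic property established earlier. We would first try to compute the Jacobian determinant explicitly in terms of $a_{j-1},a_j$ and the entries of $M$. We would then show that its vanishing at every choice of three successive points forces each point of $\mathcal{O}$ to be self-conjugate, contradicting Theorem~\ref{T4}.

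A secondary technical point is that the points of $\mathcal{O}$ must be distinct. Otherwise local perturbations affect several indices at once. This is guaranteed on the residual set of Theorem~\ref{T1}, so we intersect with that residual set.
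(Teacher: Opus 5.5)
Your architecture is the same as the paper's. You perturb the curvature at three successive bouncing points of an orbit in general position, which is all that $\mathcal{T}_1$ is used for. You note that curvature enters only the diagonal Jacobi coefficients, show that the map from the three curvatures to $Sp(1)$ is a submersion, and conclude with the inverse function theorem. The paper realizes the curvatures through Lemma~\ref{lemma Viss} rather than an explicit normal bump, which is immaterial.

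\textbf{The gap} is the spanning step you flag as the main obstacle. You leave it unproved and plan to obtain it from genericity via Theorem~\ref{T4}; that route does not work, and it is not needed. In your own reduction the three kernel lines are, in the space of Jacobi fields, $\{\xi_{j-1}=0\}$, $\{\xi_j=0\}$ and $\{\xi_{j+1}=0\}$. The first and third coincide iff some nonzero Jacobi field vanishes at $t_{j-1}$ and $t_{j+1}$. In that case \eqref{jacobi} at index $j$ reduces to $b_j\xi_j=0$. So distinctness is exactly $b_j\neq 0$. This condition is local to a single two-link segment and has nothing to do with $\det\mathcal{K}=0$ over a full period. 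At the level of the Jacobi algebra, a periodic operator with all $b_j=0$ and $n$ odd has $\det\mathcal{K}\neq 0$. So there is no implication of the form ``Jacobian vanishes at every triple $\Rightarrow$ every point is self-conjugate''.

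\textbf{The missing idea} is that $b_j\neq 0$ holds for every orbit of every strongly convex table:
\[
b_j=L_{22}(t_{j-1},t_j)+L_{11}(t_j,t_{j+1})=\omega\bigl(\gamma''(t_j),\gamma(t_{j+1})-\gamma(t_{j-1})\bigr).
\]
The reflection law makes $\gamma(t_{j+1})-\gamma(t_{j-1})$ a nonzero multiple of $\gamma'(t_j)$. Hence only the normal component of $\gamma''(t_j)$ contributes, and $b_j$ is a nonzero multiple of the curvature at $\gamma(t_j)$. This is precisely the paper's observation $A_2\neq 0$, used together with $B_3\neq 0$ from the twist condition in its explicit rank computation of the three matrices $\partial_{k_i}M$.

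Your expectation that genericity is needed comes from Birkhoff billiards. There the diagonal term contains a length-dependent part besides the curvature and can vanish (focusing), which is why \cite{Viss} imposes a residual no-focusing condition. The remark after Theorem~\ref{Franks} points out that this obstruction disappears in the symplectic setting. With $b_j\neq 0$ inserted, your argument closes, and $\mathcal{T}_1$ alone suffices as the residual set.

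A minor point: the $C^2$-smallness of your bump comes from $|\delta|$ being small, not from the support being small. Shrinking the support only makes the $C^0$ and $C^1$ norms small.
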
 
\noindent We  refer to Theorem~\ref{Franks} for details. Such a result has been obtained for classical planar billiards in \cite{Viss}. See also \cite[Theorem 6.1]{BDDGT24} for the multidimensional version. Interestingly, we remark that --unlike the classical billiard case-- the Franks' lemma for symplectic billiards does not require a no-focusing condition along three-link segments of periodic trajectories. This leads to a considerably simpler version.
Using Franks' lemma for symplectic billiards, in Theorem~\ref{stably hyperbolicity}, we show that symplectic billiards with stably hyperbolic periodic points have a hyperbolic set as the closure of the set of hyperbolic periodic points. \\
~\newline
\noindent Subsequently, in Section \ref{Jose}, we study the dynamics in a neighborhood of elliptic periodic points and prove that elliptic islands occur generically in planar symplectic billiards (Theorem~\ref{marzo}). This phenomenon is familiar from the theory of area-preserving twist maps and constitutes an important obstruction to global hyperbolicity. Similar results were obtained for classical billiards in \cite{CARNEIROKAMPHORSTCARVALHO2003,BG10}. \\
~\newline
\noindent In the final part of the paper, we first show how to make transverse heteroclinic intersections between invariant manifolds of hyperbolic periodic orbits (Proposition~\ref{prop:transversality}). This result, combined with the generic properties of periodic orbits, constitutes a version of the Kupka-Smale theorem for symplectic billiards and provides a geometric mechanism for the creation of chaotic dynamics. In particular, we consequently show that, generically, every hyperbolic periodic point has a transverse homoclinic point (Theorem~\ref{transverse homoclinic}). As a corollary, we prove the next result. 
\begin{theoremD}
\textit{For a residual set of smooth strongly convex domains, the corresponding symplectic billiard dynamics has positive topological entropy.} 
\end{theoremD}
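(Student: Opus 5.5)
The plan is to derive Theorem D from the generic existence of transverse homoclinic points, via the Birkhoff--Smale horseshoe theorem. The whole argument reduces to producing, for a residual set of domains, at least one hyperbolic periodic point of the symplectic billiard map $T_\Gamma$. Once such a point $p$ exists, Theorem~\ref{transverse homoclinic} gives a transverse homoclinic point $q\in W^s(p)\pitchfork W^u(p)$ on a residual set. The Smale--Birkhoff theorem then provides an iterate $T_\Gamma^N$ and a compact invariant set on which $T_\Gamma^N$ is conjugate to a full shift on two symbols. Hence $h_{top}(T_\Gamma)\ge \frac{1}{N}\log 2>0$. Positive entropy of the restriction to an invariant compact set bounds the entropy of the map on the whole phase space. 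That phase space is the open annulus of oriented pairs of boundary points, or equivalently its compactification. Since entropy only needs a compact invariant subset, the non-compactness of the open annulus causes no trouble.

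The main obstacle is to guarantee, generically, the existence of a hyperbolic periodic orbit. I would proceed as follows.
\begin{enumerate}
\item Variational existence (\cite{AT}). For each rotation number $1/n$, $n\ge3$, the symplectic area functional on inscribed $n$-gons has a maximum. A minimax argument gives a second critical point.
\item Generic non-degeneracy. Intersect with the residual sets of Theorem B, so that all periodic orbits are non-degenerate and the elliptic ones are non-resonant.
\item Index count. For a non-degenerate maximizing $n$-gon of a twist map, the standard relation between the Hessian of the generating function and the trace of the linearized map shows the orbit is hyperbolic, possibly with reflection, or elliptic. The Mather/Aubry--MacKay--Meiss index argument shows that the minimax orbit has the opposite Morse-index parity. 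Since the Lefschetz indices of the Birkhoff periodic orbits of a given type must sum to zero, at least one orbit of rotation number $1/n$ has index $-1$, i.e.\ it is hyperbolic (positive eigenvalues).
\end{enumerate}
I expect the sign convention of the generating function for symplectic billiards to need care. I would check it against the relation $\operatorname{tr} DT^n - 2$ versus $\det$ of the Hessian of the periodic action, which is presumably already used in Section~\ref{section 4}.

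As a fallback, if the index argument is delicate, I would use Theorem C together with Theorem~\ref{marzo}. Given any periodic orbit, the Franks' lemma lets us perturb its linearization within an open ball of $Sp(1)$. If the orbit is elliptic, a nearby element of $Sp(1)$ is not, so a $C^2$-small perturbation (then smoothed to $C^\infty$ by density) turns it hyperbolic. Hyperbolicity is an open condition. The set of domains with a hyperbolic periodic orbit is therefore open, and by this perturbation argument dense. Intersecting it with the residual sets of Theorem~\ref{transverse homoclinic} and Theorem B gives a residual set on which positive entropy holds. This completes the proof.
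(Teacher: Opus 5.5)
Your primary route is correct, and it differs from the paper's at the one nontrivial point: producing a hyperbolic periodic point.

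The paper argues by contradiction inside $\mathcal{R}$. If there were no hyperbolic periodic point, a Moser-stable elliptic point (property (c)) would exist. Angenent's zero-entropy criterion, applied to $T_\gamma^n$ on the annulus around that point, would then force either a curve of degenerate periodic points or a hyperbolic one.

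You use the variational structure instead, and your step 3 can be made much sharper than you wrote it. The Hessian of $W=\sum_i L(t_i,t_{i+1})$ at a periodic configuration is exactly the matrix $\mathcal{H}$ of \eqref{matrice}. The periodic Jacobi-matrix identity gives
\[
\det\mathcal{H}=(-1)^n\Big(\prod_{i=1}^n l_i\Big)\big(\operatorname{tr}DT_\gamma^n(t_1,s_1)-2\big),\qquad l_i=L_{12}(t_{i-1},t_i)>0 .
\]
At the area-maximizing inscribed $n$-gon, $\mathcal{H}$ is negative semidefinite, so $\operatorname{tr}DT_\gamma^n\ge 2$. Non-degeneracy (Theorem~\ref{Nondeg}) then gives $\operatorname{tr}DT_\gamma^n>2$.

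So the maximizer itself is hyperbolic with positive eigenvalues, not ``possibly with reflection, or elliptic''. The minimax/Lefschetz-sum detour is therefore unnecessary. That claim is true, but it needs a boundary-twist homotopy of $\tau^{-1}\circ\tilde T_\gamma^n$ on the closed strip, which you did not supply.

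What your route buys is an elementary, constructive argument. It gives a hyperbolic orbit in every rotation class $1/n$ using only non-degeneracy and the existence of area maximizers. It also removes Moser's twist theorem and Angenent's criterion from the proof of Corollary~\ref{positive entropy}. The paper's route stays purely dynamical, using the generic properties already assembled in $\mathcal{R}$, and never needs the determinant--trace identity.

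Discard your fallback argument, because it fails. Ellipticity, $|\operatorname{tr}|<2$, is an open condition in $Sp(1)$. Theorem~\ref{Franks} only realizes a small ball around $DT_\gamma(\mathcal{O})$, whose radius shrinks with the $C^2$-neighbourhood $U$. Hence for an elliptic orbit every realizable nearby linearization is still elliptic, and density of tables with a hyperbolic periodic orbit does not follow this way. Density follows instead from the maximizer argument above, so the fallback is not needed.
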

\noindent We refer to Corollary~\ref{positive entropy}. This result may be viewed as a symplectic counterpart of the generic positive topological entropy results established for classical convex billiards \cite{Cheng2004,BDDGT24}. See also the recent preprint \cite{FLORIO} showing that positive topological entropy is generic in the analytic category. \\
~\newline
All together, the results of this paper establish a generic theory of planar symplectic billiards parallel to that developed over the past decades for classical billiards, showing that many of the fundamental mechanisms responsible for complex behavior in classical billiards translate to the symplectic setting.

\section{Symplectic billiards} \label{INTRO DEF}
\noindent Let $\Omega \subset (\mathbb{R}^2,\omega)$ be a strictly convex planar domain with $C^k$ boundary $\Gamma$, $k\geq 2$. Assume that 
the perimeter of $\Gamma$ is normalized to $2 \pi$. Fix the origin $O \in \mathrm{int}(\Omega)$ and the positive counter-clockwise orientation on $\Gamma$. Let $\gamma: \mathbb{S} \to \Gamma$ be a $C^k$ parametrization of $\Gamma$ such that $\gamma(\mathbb{S})=\Gamma$, where $\mathbb{S} = \mathbb{R}/2\pi\mathbb{Z}$. Given $x_1,x_2 \in \mathbb{R}^2$,
\[
\omega\colon \mathbb{R}^2 \times \mathbb{R}^2  \ni (x_1,x_2)\mapsto \omega(x_1,x_2):=\det(x_1-O,x_2 - O)\in\mathbb{R}\, ,
\]
see Figure \ref{L}. In particular, for all $t_1,t_2\in\mathbb{S}$, the notation \[
\omega(\gamma(t_1),\gamma(t_2))
\]
indicates the signed area of the parallelogram of sides $\gamma(t_1)-O$ and $\gamma(t_2)-O$. 
\begin{figure}[ht]
 \centering
\includegraphics[scale=0.35]{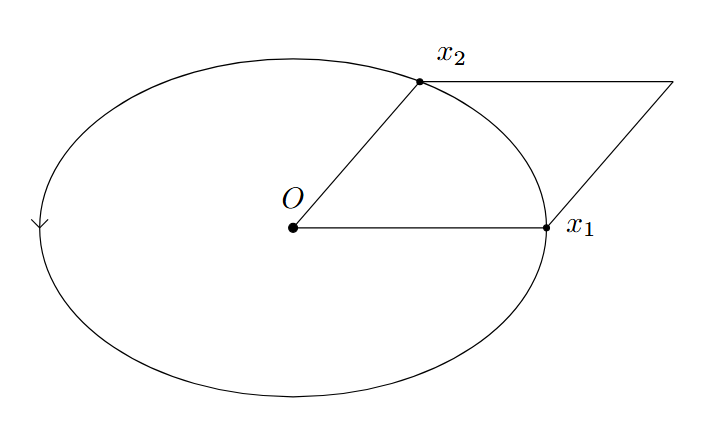}
    \caption{$\det(x_1-O,x_2-O)$ is the area of the parallelogram in figure.}
    \label{L}
\end{figure}
\\

\noindent Since $\Omega$ is strictly convex, for every point $\gamma(t) \in \Gamma$, there exists a unique point $\gamma(t^*)$ ($t^* \ne t$) such that 
\[
\omega(\gamma'(t),\gamma'(t^*)) = 0\, .
\]
(In the above formula, $\gamma'(t)$ and $\gamma'(t^*)$ denote the tangent vectors at $\Gamma$ at the points $\gamma(t)$ and $\gamma(t^*)$ respectively). We refer to
\begin{equation} \label{PS}
\hat{\mathcal{P}}_\gamma = \{ (\gamma(t_1),\gamma(t_2)) \in \Gamma \times \Gamma: \ \gamma(t_1) < \gamma(t_2) < \gamma(t^*_1)\}
\end{equation}
as the (open, positive) phase space, see \cite[Section 2.2]{BBFN} for details. We define the symplectic billiard map on $\hat{\mathcal{P}}_\gamma$ as follows (see \cite[Page 5]{AT}):
$$\hat{T}_\gamma: \hat{\mathcal{P}}_\gamma \to \hat{\mathcal{P}}_\gamma, \qquad (\gamma(t_1), \gamma(t_2) \mapsto (\gamma(t_2),\gamma(t_3))$$
where $\gamma(t_3) \in \Gamma$ is the unique point such that
$$\omega( \gamma^{\prime} (t_2),\gamma(t_3)-\gamma(t_1))=0\, ,$$ 
\noindent see Figure \ref{SB}. The next proposition summarizes the main properties of the symplectic billiard map. We refer to \cite[Section 2]{AT} for further details and for the proofs.

\begin{figure}[ht]
\centering
\includegraphics[scale=0.4]{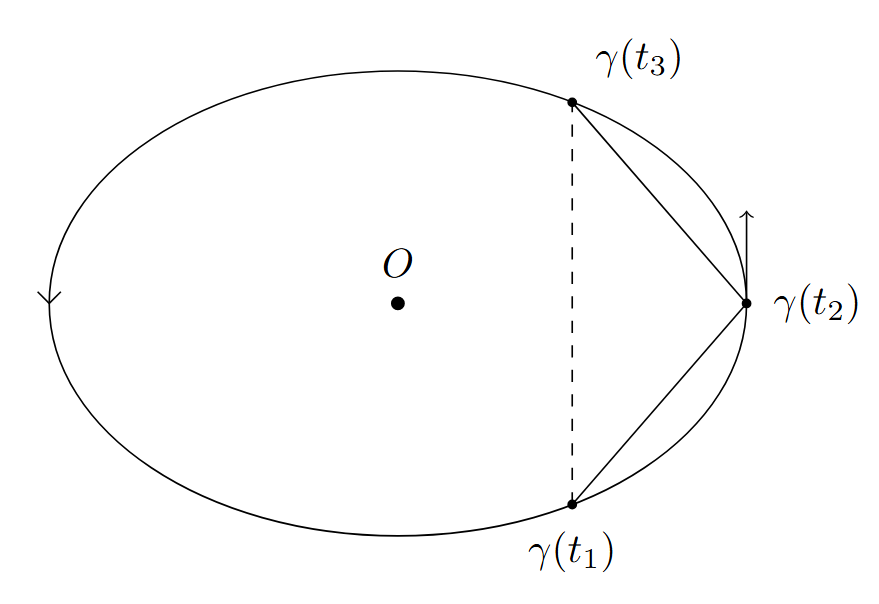}
    \caption{The symplectic billiard map reflection.}
    \label{SB}
\end{figure}

\begin{proposition}\label{properties}
The following properties hold.
\begin{enumerate}
    \item 
    $\hat{T}_\gamma$ is $C^{k-1}$ and it extends continuously to the closure of $\hat{\mathcal{P}}_\gamma$ so that 
$$\hat{T}_\gamma(\gamma(t),\gamma(t)) = ((\gamma(t),\gamma(t)) \qquad \text{and} \qquad \hat{T}_\gamma(\gamma(t),\gamma(t^*)) = ((\gamma(t^*),\gamma(t))\, .$$
\item For every $(\gamma(t_1),\gamma(t_2))\in\hat{\mathcal{P}}_\gamma$ one has
\begin{equation} \label{GF}
\hat{T}_\gamma(\gamma(t_1),\gamma(t_2)) = (\gamma(t_2),\gamma(t_3)) \qquad \Longleftrightarrow \qquad \omega(\gamma(t_1),\gamma'(t_2)) + \omega(\gamma'(t_2),\gamma(t_3)) = 0\, .
\end{equation}
In other words, $\omega$ is a generating function for $\hat{T}_\gamma$.
\item The map $\hat T_\gamma$ does not depend on the choice of the origin $O$. 
\item\label{punto 4 trsnf affini} The map $\hat T_\gamma$ commutes with any map obtained as affine transformation of the plane, since they preserve tangent directions. 
\end{enumerate}
\end{proposition}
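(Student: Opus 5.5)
We check the four items of Proposition~\ref{properties} directly from the geometric definition of $\hat T_\gamma$, namely the condition $\omega(\gamma'(t_2),\gamma(t_3)-\gamma(t_1))=0$. We begin with item 2, since the others follow from it.

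\textbf{Item 2 (generating function).} Set $L(t_1,t_2,t_3)=\omega(\gamma(t_1),\gamma(t_2))+\omega(\gamma(t_2),\gamma(t_3))$. By bilinearity and antisymmetry of $\omega$,
\[
\partial_{t_2}L=\omega(\gamma(t_1),\gamma'(t_2))+\omega(\gamma'(t_2),\gamma(t_3))=\omega(\gamma'(t_2),\gamma(t_3)-\gamma(t_1)).
\]
So the reflection law, namely that $\gamma'(t_2)$ is parallel to the chord $\gamma(t_3)-\gamma(t_1)$, is exactly the critical point equation. This proves \eqref{GF}. In addition, the partial derivatives of $\omega(\gamma(t_1),\gamma(t_2))$ in each variable give the exact symplectic form on phase space.

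\textbf{Item 1 (existence, uniqueness, regularity, boundary values).} Fix $t_1<t_2<t_1^*$. Consider $F(s)=\omega(\gamma'(t_2),\gamma(s)-\gamma(t_1))$, the signed distance of $\gamma(s)$ from the line through $\gamma(t_1)$ parallel to $\gamma'(t_2)$.
\begin{itemize}
\item By strict convexity, this line meets $\Gamma$ at $\gamma(t_1)$ and at exactly one other point $\gamma(t_3)$.
\item The constraint $t_2\in(t_1,t_1^*)$ ensures that the line is not tangent at $\gamma(t_1)$.
\item Monotonicity of the tangent direction then places $t_3$ in $(t_2,t_2^*)$.
\end{itemize}
The crossing at $t_3$ is transversal, so $F'(t_3)\neq0$; again this follows from strict convexity, since the line is not tangent there. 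The implicit function theorem then gives $t_3$ as a $C^{k-1}$ function of $(t_1,t_2)$, because $F$ involves $\gamma'$, which is $C^{k-1}$.

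For the boundary values:
\begin{itemize}
\item As $t_2\to t_1$, the chord direction tends to the tangent direction and $t_3\to t_1$, giving the fixed point.
\item As $t_2\to t_1^*$, the line through $\gamma(t_1)$ parallel to $\gamma'(t_1^*)$ is the tangent line at $t_1$, so $t_3\to t_1$, which gives $(\gamma(t^*),\gamma(t))$.
\end{itemize}
Continuity at these limits follows from compactness together with the uniqueness just proved.

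\textbf{Items 3 and 4.} Replacing $O$ by $O'$ changes $\omega(x,y)$ by terms linear in $x$ and in $y$ separately. The defining relation $\omega(\gamma'(t_2),\gamma(t_3)-\gamma(t_1))=0$ uses only differences of points and a tangent vector, so it is independent of $O$. The same observation handles affine maps $A(x)=Mx+b$: the condition is mapped to $\det M$ times itself, and tangent vectors transform by $M$. Hence $A$ conjugates $\hat T_\gamma$ to $\hat T_{A\circ\gamma}$.

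The main obstacle is item 1. Specifically, it is checking that $t_3$ stays in the open phase space and that the extension is continuous at the boundary diagonal $t_2=t_1$ and at $t_2=t_1^*$. For these I plan to use a careful monotonicity argument on the turning angle of $\gamma'$.
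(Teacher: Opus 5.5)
Your proposal is correct and takes essentially the paper's route. The paper gives no argument of its own and defers to \cite{AT}, Section 2, where these properties are checked by the same direct verification you outline: the reflection law is the critical-point equation in $t_2$ of $\omega(\gamma(t_1),\gamma(t_2))+\omega(\gamma(t_2),\gamma(t_3))$; the implicit function theorem applies since $\omega(\gamma'(t_2),\gamma'(t_3))\neq 0$; and origin and affine invariance hold because $\omega(\gamma'(t_2),\gamma(t_3)-\gamma(t_1))=0$ involves only differences of points and tangent vectors.

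Two details should be tightened. First, the equation in \eqref{GF} is also solved by the spurious root $t_3=t_1$, so the direction ``$\Leftarrow$'' needs $t_3\neq t_1$, equivalently $(\gamma(t_2),\gamma(t_3))\in\hat{\mathcal{P}}_\gamma$. Second, the inclusion $t_3\in(t_2,t_2^*)$ that you defer follows from Rolle's theorem applied to $s\mapsto\omega(\gamma(s)-\gamma(t_1),\gamma(t_3)-\gamma(t_1))$ on each of the two arcs cut off by $\gamma(t_1)$ and $\gamma(t_3)$. Since the tangent direction is parallel to the chord only at $t_2$ and $t_2^*$, this puts exactly one of them on each arc. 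Because $t_2$ precedes $t_2^*$ counterclockwise from $t_1$, you get the cyclic order $t_1<t_2<t_3<t_2^*$.
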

\noindent From now on we use the notation 
$$L(t_1,t_2) := \omega(\gamma(t_1),\gamma(t_2)), \qquad L_1(t_1,t_2) := \omega(\gamma'(t_1),\gamma(t_2)) \qquad \text{and} \qquad L_2(t_1,t_2) := \omega(\gamma(t_1),\gamma'(t_2))\, .$$
Up to a change of coordinates, it is possible to see the symplectic billiard map as a (negative) twist map. Indeed, let
\begin{equation*}\label{change coordinates}
s_1 = -L_1(t_1,t_2), \qquad s_2 = L_2(t_1,t_2)
\end{equation*}
and 
$$\mathcal{P}_\gamma=\{ (t,s)\in\mathbb{S}\times\mathbb{R} :\ s\in (\psi_1(t),\psi_2(t) )\}\, ,$$
where 
\[\psi_1(t):=-L_1(t,t^*) < 0 \qquad \text{and} \qquad \psi_2(t):=-L_1(t,t) > 0\, ,\]
\noindent see Figure \ref{fig:cilindrosbilenco}. Then $(t,s)$ are coordinates on $\mathcal{P}_\gamma$ and we denote the symplectic billiard map on $\mathcal{P}_\gamma$ simply by $T_\gamma$:
$$T_\gamma: \mathcal{P}_\gamma \to \mathcal{P}_\gamma, \qquad (t_1,s_1 = -L_1(t_1,t_2)) \mapsto (t_2, s_2 = L_2(t_1,t_2))\, .$$
Moreover, $T_\gamma$ is a (negative) twist map: 
$$\frac{\partial s_1}{\partial t_2} = -L_{12}(t_1,t_2) < 0\, ,$$
which preserves $ds \wedge dt$.
\begin{figure}
    \centering
    \includegraphics[scale=0.15]{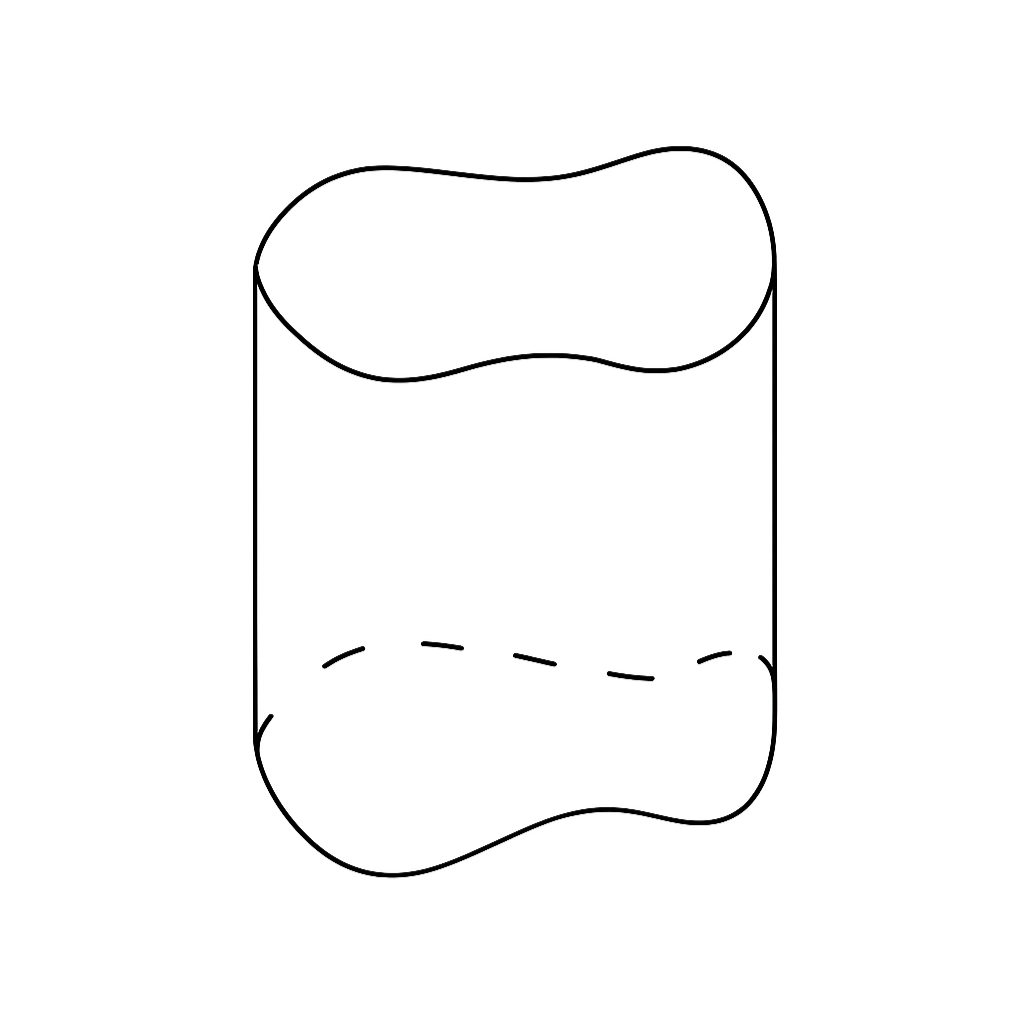}
    \caption{The phase space $\mathcal{P}_\gamma$.}
    \label{fig:cilindrosbilenco}
\end{figure} \\
\noindent The following lemma provides a formula for the differential of $T_\gamma$, which will be useful in many occasions later on. We refer to \cite[Lemma 2.18]{BBFN} for the proof. 
\begin{lemma}\label{formula DT lambda}
    Let $T_\gamma$ be the symplectic billiard map on $\mathcal{P}_\gamma$. For $(t_1,s_1 = -L_1(t_1,t_2)) \in \mathcal{P}_\gamma$, it holds:
    \begin{equation}
\label{differential}
DT_\gamma (t_1,s_1) = -\frac{1}{L_{12}(t_1,t_2)}\begin{pmatrix}
      L_{11}(t_1,t_2) & 1\\
- L_{12}^2(t_1,t_2)+ L_{22}(t_1,t_2)\cdot L_{11}(t_1,t_2)&  L_{22}(t_1,t_2)
    \end{pmatrix}\, .
\end{equation}
\end{lemma}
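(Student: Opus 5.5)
The plan is to compute $DT_\gamma$ directly from the implicit description of $T_\gamma$ through the generating function $L$. In the coordinates $(t,s)$ the map is
$$T_\gamma(t_1,s_1)=(t_2,s_2), \qquad s_1=-L_1(t_1,t_2), \qquad s_2=L_2(t_1,t_2).$$
Here $t_2$ is determined implicitly as a function of $(t_1,s_1)$. This is possible by the twist condition $L_{12}\neq 0$ and the implicit function theorem. Throughout, subscripts on $L$ denote partial derivatives in the respective variables, e.g.\ $L_{11}=\partial_{t_1}^2 L$, and all of them are evaluated at $(t_1,t_2)$.

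The first step is to differentiate the relation $s_1=-L_1(t_1,t_2)$:
$$ds_1=-L_{11}\,dt_1-L_{12}\,dt_2.$$
Solving for $dt_2$ gives
$$dt_2=-\frac{1}{L_{12}}\bigl(L_{11}\,dt_1+ds_1\bigr).$$
This already yields the first row of the matrix: $\partial t_2/\partial t_1=-L_{11}/L_{12}$ and $\partial t_2/\partial s_1=-1/L_{12}$.

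The second step is to differentiate $s_2=L_2(t_1,t_2)$:
$$ds_2=L_{21}\,dt_1+L_{22}\,dt_2.$$
Substituting the expression for $dt_2$ and using $L_{21}=L_{12}$, which holds for $\gamma\in C^2$, gives
$$ds_2=\frac{L_{12}^2-L_{22}L_{11}}{L_{12}}\,dt_1-\frac{L_{22}}{L_{12}}\,ds_1.$$
After factoring out $-1/L_{12}$, the bottom row becomes $\bigl(-L_{12}^2+L_{22}L_{11},\ L_{22}\bigr)$, which matches \eqref{differential}.

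The only points that need care are the following.
\begin{enumerate}
\item Sign conventions. We have $L_1=\omega(\gamma'(t_1),\gamma(t_2))$, so $L_{12}=\omega(\gamma'(t_1),\gamma'(t_2))$, and this is nonzero on the open phase space $\hat{\mathcal P}_\gamma$, since $t_2\neq t_1,t_1^*$ there.
\item The mixed partials commute.
\end{enumerate}
As a consistency check, the determinant of the resulting matrix is
$$\frac{1}{L_{12}^2}\bigl(L_{11}L_{22}+L_{12}^2-L_{11}L_{22}\bigr)=1,$$
in agreement with the preservation of $ds\wedge dt$. I expect no real obstacle here: it is a routine implicit differentiation.
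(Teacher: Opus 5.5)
Your implicit differentiation of $s_1=-L_1(t_1,t_2)$ and $s_2=L_2(t_1,t_2)$ is correct: with $L_{12}=\omega(\gamma'(t_1),\gamma'(t_2))\neq 0$ on the open phase space, it reproduces both rows of \eqref{differential}, and your determinant check confirming $\det DT_\gamma=1$ is also right. The paper does not prove this lemma itself but cites \cite[Lemma 2.18]{BBFN}, and your computation is the standard generating-function derivation behind such a formula, so nothing is missing.
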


\begin{notation}
Through the whole paper, we shall assume that $\Omega$ is strongly convex that is the boundary $\Gamma$ of $\Omega$ has everywhere strictly positive curvature. The set of strongly convex billiard tables with $C^\infty$ boundary will be denoted by $\mathcal{S}$. Any element in $\mathcal{S}$ is defined by a parametrization $\gamma:\mathbb{S} \to \Gamma$ of its boundary so that we indicate $\gamma \in \mathcal{S}$ an element in $\mathcal{S}$. We endow $\mathcal{S}$ with the Whitney $C^\infty$ topology so that $\gamma_n\to \gamma$ iff $\|\gamma_n - \gamma\|_{C^k} \to 0$ for every $k \ge 0$. The space $\mathcal{S}$ equipped with the topology induced by the norms $\| \cdot \|_k$, for every $k \ge 0$, is Fréchet and also Baire. 
As a consequence, we have that every countable intersection of open and dense subsets in $\mathcal{S}$ is dense. Finally, in what follows, an element $\gamma \in \mathcal{S}$ is said to be generic if it is on a residual set of $\mathcal{S}$, i.e. $\gamma$ lays in a countable intersection of dense subsets of $\mathcal{S}$.   
\end{notation}


\section{Generic position of symplectic periodic orbits} \label{section 3}

\noindent This section is devoted to prove the following two theorems:
\begin{theorem} \label{T1}
There exists a residual set $\mathcal{T}_1 \subset \mathcal{S}$ such that for every $\gamma\in \mathcal{T}_1$ no periodic orbit in $\gamma$ passes --within one period-- through the same point more than once.
\end{theorem}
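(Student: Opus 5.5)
We follow the Petkov--Stojanov strategy, adapted to the generating function $L(t_1,t_2)=\omega(\gamma(t_1),\gamma(t_2))$. By \eqref{GF}, a period-$n$ orbit $(t_1,\dots,t_n)$, indices mod $n$, is exactly a critical point of
\[
F_\gamma(t_1,\dots,t_n)=\sum_{i=1}^n \omega(\gamma(t_i),\gamma(t_{i+1})),
\]
restricted to configurations where consecutive pairs lie in $\hat{\mathcal P}_\gamma$. Equivalently, for each $i$,
\[
\Phi_i:=\omega\big(\gamma'(t_i),\,\gamma(t_{i+1})-\gamma(t_{i-1})\big)=0 .
\]
Consecutive points are automatically distinct. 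A self-intersection within one period therefore means $t_i=t_j$ for some non-adjacent $i\neq j$.

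For fixed $n$ and a fixed pair $(i,j)$ we consider the set $M_{n,i,j}$ of configurations with $t_i=t_j$. We write the conditions $\Phi_k=0$, $k=1,\dots,n$, on the remaining distinct parameters. These are $n$ equations in $n-1$ unknowns, since $t_j$ is eliminated. Each $\Phi_k$ depends only on the $2$-jet of $\gamma$ at the points $t_{k-1},t_k,t_{k+1}$. Hence the whole system defines a submanifold $W$ of the multijet bundle $J^2_{m}(\mathbb S,\mathbb R^2)$, where $m$ is the number of distinct points. We must deal with the fact that distinct indices may carry equal parameters. To do so we stratify by the partition of $\{1,\dots,n\}$ induced by coincidences and work with one partition at a time, countably many in all.

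The key claim is that $W$ has codimension at least $m+1$ in the multijet space, where $m$ is the number of distinct points. Granting this, the Multijet Transversality Theorem gives, for each $n$ and each stratum, an open dense (for compact pieces) or residual set of $\gamma\in\mathcal S$ for which the multijet extension $j^2_m\gamma$ is transverse to $W$. Since $\dim\mathbb S^{(m)}=m<\operatorname{codim} W$, transversality means $j^2_m\gamma$ misses $W$, so no such orbit exists. To obtain open dense sets we intersect with compact sets of configurations bounded away from the diagonal and from the boundary of phase space, at distance at least $1/\ell$. The final set $\mathcal T_1$ is then a countable intersection over $n$, the stratum and $\ell$. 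Openness comes from the fact that transversality to a closed set over a compact domain is an open condition. Density comes from the transversality theorem, which can be realized by perturbations of the form $\gamma+\sum\varepsilon_k\phi_k$ that remain strongly convex in $\mathcal S$.

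The main obstacle is the codimension count, that is, showing the equations are independent in the jet variables. At the repeated point $p=t_i=t_j$ we get two equations,
\[
\omega\big(\gamma'(p),\gamma(t_{i+1})-\gamma(t_{i-1})\big)=0,\qquad \omega\big(\gamma'(p),\gamma(t_{j+1})-\gamma(t_{j-1})\big)=0,
\]
which involve the same $1$-jet at $p$. The equation at each other point $t_k$ can be solved independently by varying $\gamma'(t_k)$, which is a free jet coordinate. The two equations at $p$ give $\gamma'(p)$ parallel to both chords $\gamma(t_{i+1})-\gamma(t_{i-1})$ and $\gamma(t_{j+1})-\gamma(t_{j-1})$. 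These two conditions are independent in the $0$-jets of the neighbours unless the neighbours are shared points. In the good case, varying $\gamma(t_{j+1})$ freely, when it is not among $t_{i\pm1}$, makes the second equation independent. Thus we obtain $m+1$ independent equations.

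The degenerate situations must be handled separately: $\{t_{j-1},t_{j+1}\}=\{t_{i-1},t_{i+1}\}$, and more generally heavy coincidence patterns. For these I would first try to use the reversibility and uniqueness of the map. If $(t_{i-1},t_i)=(t_{j-1},t_j)$, then by determinism the orbit has a smaller period. In that case we reduce to the primitive period, where the period is counted minimally. If $(t_{i-1},t_i)=(t_{j+1},t_j)$, the orbit reverses at $p$. By \eqref{GF}, this forces $t_{i+1}=t_{i-1}$, which contradicts the twist and phase-space condition (consecutive points are ordered in $\hat{\mathcal P}_\gamma$), or else a symmetric configuration. In the symmetric case we propagate along the orbit until an unshared neighbour appears, which gives the needed extra independent equation. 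Checking that these reductions exhaust all cases is the delicate combinatorial part.
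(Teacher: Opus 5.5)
\textbf{Verdict.} Your overall strategy is the paper's, but the last paragraph (the degenerate cases) is left open and is partly misargued. One short observation closes it.

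\textbf{What matches the paper.} You stratify by the coincidence pattern, which is the paper's surjection $\sigma$. You take two equations at the repeated point $p$ and one at each other distinct point, obtain codimension at least $m+1$, and conclude with the Multijet Transversality Theorem and a dimension count. Your independence argument is the paper's: vary $\gamma'(t_k)$ at the other points (the paper's $V_k$), vary $\gamma'(p)$ for the first equation at $p$, and vary the $0$-jet $\gamma(t_{j+1})$ in the direction $J\gamma'(p)$ for the second (the paper's $W_l$). Using one equation per other point instead of the paper's summed $f_i$ is fine, and it spares you the ``all chords point to the same side'' argument. You only need $1$-jets; the $2$-jet coordinates play no role.

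\textbf{The gap.} It is in your last paragraph, which you yourself leave unfinished. The reversal case is also argued incorrectly. From $(t_{i-1},t_i)=(t_{j+1},t_j)$, reversibility and uniqueness of the third point give $t_{j-1}=t_{i+1}$, not $t_{i+1}=t_{i-1}$. The ``symmetric configuration / propagate until an unshared neighbour appears'' step is not an argument.

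\textbf{The fix.} There are no degenerate cases at all. On the positive phase space one has $L_{12}(a,b)=\omega(\gamma'(a),\gamma'(b))>0$, and $L_{12}$ is antisymmetric. Hence $(a,b)\in\hat{\mathcal P}_\gamma$ forces $(b,a)\notin\hat{\mathcal P}_\gamma$. Three consequences follow.
\begin{itemize}
\item $t_{k+1}\neq t_{k-1}$ for every $k$. So every chord you differentiate against is nonzero, a fact you used without comment.
\item $t_{j+1}=t_{i-1}$ is impossible: it would make $(t_j,t_{j+1})=(t_i,t_{i-1})$ the reverse of a pair in $\hat{\mathcal P}_\gamma$.
\item $t_{j+1}=t_{i+1}$ is impossible: it would make $(t_j,t_{j+1})=(t_i,t_{i+1})$ a repeated phase point, contradicting minimality of the period since $0<|j-i|<n$.
\end{itemize}
So $t_{j+1}\notin\{t_{i-1},t_{i+1}\}$ always, and that is all your argument needs.

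Order the columns as $(\gamma'(t_k))_{k\neq p}$, $\gamma'(p)$, $\gamma(t_{j+1})$, and the rows as (other points, first equation at $p$, second equation at $p$). The Jacobian is then block triangular,
$\left(\begin{smallmatrix} D&0&*\\ 0&a&0\\ 0&b&c\end{smallmatrix}\right)$, with $D$ diagonal invertible and $a,c\neq0$. Any further coincidences among the remaining points only affect the block marked $*$, so ``heavy coincidence patterns'' cause no trouble either. This observation is exactly the content of condition (b) in the paper's definition of admissible $\sigma$: no unordered pair of consecutive points repeats. With it, your good case is the only case and the proof closes.
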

\begin{theorem}\label{T2}
There exists a residual set $\mathcal{T}_2 \subset \mathcal{S}$ such that for every $\gamma\in \mathcal{T}_2$ each
pair of distinct periodic orbits in $\gamma$ have no common reflection points.
\end{theorem}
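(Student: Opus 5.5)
The plan is to follow the Petkov--Stojanov strategy, using the variational description of periodic orbits from Proposition~\ref{properties}(2) together with the Multijet Transversality Theorem. Consider two distinct periodic orbits $(t_1,\dots,t_n)$ and $(\tau_1,\dots,\tau_m)$, indices taken cyclically. Each is a critical point of its periodic action
\[
F_n(t_1,\dots,t_n)=\sum_{i} L(t_i,t_{i+1}).
\]
Equivalently, at every $i$ the reflection equation
\[
L_2(t_{i-1},t_i)+L_1(t_i,t_{i+1})=0
\]
holds. This equation only involves the $1$-jet of $\gamma$ at $t_{i-1},t_i,t_{i+1}$. Define, for $n,m\ge 2$, the set $\mathcal{T}_2^{n,m}$ of $\gamma\in\mathcal{S}$ such that no orbit of period $n$ and no orbit of period $m$ share a reflection point, unless they coincide. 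The aim is to show that each $\mathcal{T}_2^{n,m}$ contains a residual set; then $\mathcal{T}_2=\bigcap_{n,m}\mathcal{T}_2^{n,m}$ is residual. The argument is organised in the following steps.

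\textbf{Step 1: setting up the multijet condition.} Suppose the two orbits share a point. After a cyclic relabelling we may assume $t_1=\tau_1$. Collect all the distinct parameters appearing in both orbits, say $u_1,\dots,u_p$ with $p\le n+m-1$. The data consist of:
\begin{itemize}
\item the $2$-multijet $j^2\gamma(u_1,\dots,u_p)$;
\item the combinatorial pattern recording which $t_i$ and which $\tau_j$ equal which $u_k$;
\item the $n+m$ reflection equations above, all of which can be read off from the multijet.
\end{itemize}
For a fixed combinatorial pattern, these equations cut out a subset $W$ of the multijet space $J^2_p(\mathbb{S},\mathbb{R}^2)$. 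I will show that $W$ is a submanifold, or a finite union of submanifolds, whose codimension exceeds $p$, the dimension of the source. Once this holds, the Multijet Transversality Theorem gives a residual set of $\gamma$ whose multijet avoids $W$ entirely. Strong convexity is an open condition, so residuality passes to $\mathcal{S}$. The patterns are countable, so intersecting over them preserves residuality. Removing diagonal issues via compact exhaustions of the configuration space also yields openness where it is needed.

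\textbf{Step 2: the key counting.} The main work is showing that the reflection equations are independent in the jet variables. The equation at $t_i$ depends linearly on the tangent vector $\gamma'(t_i)$:
\[
\omega\bigl(\gamma'(t_i),\gamma(t_{i+1})-\gamma(t_{i-1})\bigr)=0 .
\]
Since $t_{i+1}\neq t_{i-1}$ for a genuine orbit of period at least $3$, varying $\gamma'(t_i)$ in the direction transverse to the chord $t_{i-1}t_{i+1}$ changes this equation at first order. For a point $u_k$ visited only once in total, its tangent variable therefore controls exactly one equation. The constraint $t_1=\tau_1$ is the source of the extra codimension. At $u=t_1=\tau_1$ there are two equations, both constraining the single vector $\gamma'(u)$:
\[
\gamma'(u)\parallel \gamma(t_2)-\gamma(t_n)
\qquad\text{and}\qquad
\gamma'(u)\parallel \gamma(\tau_2)-\gamma(\tau_m).
\]
The second equation can then be made independent by perturbing the positions $\gamma(\tau_2)$ or $\gamma(\tau_m)$, which are point variables of the jet. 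These positions must not be already used by other equations in a dependent way, and that is where the argument can get delicate. Overall, $W$ has codimension $p+1$ in jet space, whereas orbits generically form a $0$-dimensional set in parameter space.

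\textbf{Main obstacle.} The hardest part is the degenerate coincidence patterns, where the two orbits share many points and the relevant jet variables are used repeatedly.
\begin{itemize}
\item If $\tau_2=t_2$ and $\tau_m=t_n$, the two conditions at $u$ coincide. I intend to propagate along the orbits: by the uniqueness of the third point given two consecutive points (the map $\hat T_\gamma$), agreement of two consecutive points forces the orbits to coincide. This reduces to the case where some neighbour differs.
\item If the orbits share several points in a non-consecutive way, I would choose the \emph{first} index where the sequences diverge. I would then perturb the second-order jet or the position at the diverging point to obtain the missing independent equation.
\item The period-$2$ case (diameter-type orbits with $t_{i+1}=t_{i-1}$) must be handled separately, since there the chord degenerates. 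The plan is to replace the reflection equation by $\omega(\gamma'(t_1),\gamma'(t_2))=0$, which involves the tangent vectors at both points.
\end{itemize}
Verifying transversality in every such pattern through a careful case analysis is where I expect most of the effort to go.
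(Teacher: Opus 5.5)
Your mechanism is essentially the paper's: multijet transversality, one reflection equation per distinct parameter controlled by the tangent variable there, and one extra equation at the shared point $u$ made independent by moving a neighbouring position in the direction $J\gamma'(u)$ (the paper's vectors $V_k$ and $W_l$). However, you leave open the one fact that makes this work, and the case analysis you propose cannot supply it.

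What is needed is that no consecutive pair through $u$ on one orbit reappears, in either order, on the other. This is the paper's condition (b). Given it, $x_{\tau_2}$ does not occur in the first equation at $u$, and every other chosen equation containing it is controlled by its own tangent variable. The Jacobian is then block triangular, so no ``first divergence index'' or second-order jets are needed.

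Uniqueness of $\hat T_\gamma$-orbits excludes only $\tau_2=t_2$. The reversed pattern $\tau_2=t_n$, $\tau_m=t_2$ survives it, and there your count genuinely fails. The equations $\omega(\gamma'(t_i),\gamma(t_{i+1})-\gamma(t_{i-1}))=0$, which you call \emph{equivalent} to being an orbit, are invariant under reversing the sequence. So the reversal of any periodic orbit solves the same system and shares all its points, giving codimension $p$, not $p+1$. What excludes it is the orientation of the phase space: if $(\gamma(a),\gamma(b))\in\hat{\mathcal P}_\gamma$, then $(\gamma(b),\gamma(a))\notin\hat{\mathcal P}_\gamma$. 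You must keep the open constraint that consecutive pairs lie in $\hat{\mathcal P}_\gamma$ in the domain of the multijet map, as the paper does through $\mathbb{S}^{(s)}_{\sigma_1,\sigma_2,\gamma}$.

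The same orientation fact shows that your period-$2$ case is spurious, and your plan for it would fail. Consecutive pairs $(t_{i-1},t_i)$ and $(t_i,t_{i+1})$ in $\hat{\mathcal P}_\gamma$ force $t_{i+1}\neq t_{i-1}$, so every periodic orbit has period at least $3$. The configurations satisfying $\omega(\gamma'(t_1),\gamma'(t_2))=0$ are the boundary cycles $\{t,t^*\}$. They form a one-parameter family passing through every point of $\Gamma$, so your replacement equation yields codimension $p$, not $p+1$. If these cycles were admitted, the statement itself would be false, since every genuine orbit shares a point with one of them. Take $n,m\ge 3$.
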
 

\noindent For the proof of these theorems, we shall follow the technique of \cite{Sto87} which, in turn, applies the Multijet Transversality Theorem (see \cite[Theorem 4.13 and Lemma 4.14]{GG73}). To this end, we need to premise some notations and definitions. \\
\noindent Given $n,s \in \mathbb{N}$, with $n \ge s$, let
$$\sigma: \{1,\ldots,n\}\rightarrow \{1,\ldots,s\}, \quad i \mapsto \sigma(i)$$
be a surjective map satisfying
\begin{enumerate}
\item[$(a)$] $\sigma(i) \neq \sigma(i+1)$ for every $i \in \{1,\ldots,n\}$;
\item[$(b)$] $\{\sigma(i),\sigma(i+1)\} \neq \{\sigma(j),\sigma(j+1) \}$ for every $i \ne j$.
\end{enumerate}
The map $\sigma$ will be employed to describe a $n$-periodic orbit in a given symplectic billiard table $\gamma \in \mathcal{S}$. If such an orbit passes --in one period-- through some reflection point more than once, then $n > s$. Otherwise, if every point of the orbit is visited --in one period-- exactly once, then $s = n$ and $\sigma$ is a bijection (in such a case, we can assume $\sigma = \text{id}$). In what follows, we still indicate by $\sigma$ the natural $n$-periodic extension to this map to $\mathbb{Z}$, that is
$$\sigma: \mathbb{Z} \rightarrow \{1,\ldots,s\}, \quad i \mapsto \sigma(i)$$
satisfying
\begin{enumerate}
\item[$(c)$] $\sigma(i + mn) = \sigma(i)$ for every $(i,m) \in \mathbb{Z} \times \mathbb{Z}$.
\end{enumerate}
Given $\mathcal{S} \ni \gamma: \mathbb{S} \to \Gamma$, the $1$-jet of $\gamma$ at a point $t \in \mathbb{S}$ is the $5$-vector:
$$j^1\gamma(t) := (t,\gamma(t),\gamma'(t))\, .$$
More generally, if $\gamma^s := \gamma \times \ldots \times \gamma: \mathbb{S} \times \ldots \times \mathbb{S} \to \Gamma \times \ldots \Gamma$ is the Cartesian product of $s$ copies of $\gamma$, the $1$-jet of $\gamma^s$ at a point $(t_1,\ldots,t_s) \in \mathbb{S}\times \ldots \times \mathbb{S}$ is the $5s$-vector:
\begin{eqnarray*}
j^1\gamma^s(t_1,\ldots,t_s) & := & \left(j^1\gamma(t_1),\ldots,j^1\gamma(t_s)\right) \\
& \cong &\left(t_1,\ldots,t_s,\gamma(t_1),\ldots,\gamma(t_s),\gamma'(t_1),\ldots,\gamma'(t_s)\right)\, .
\end{eqnarray*}
In order to give the next definition, for a set $A$, we indicate (as in \cite{Sto87}):
$$A^{(s)} := \{(a_1,\ldots,a_s) \in A^s: a_i \ne a_j \textnormal{ whenever } i \ne j\}\, .$$
\begin{definition} \label{def 1}
For $\sigma$ as above and $\gamma \in \mathcal{S}$, we set
$$\mathbb{S}^{(s)}_{\sigma,\gamma} := \{(t_1,\dots,t_s) \in \mathbb{S}^{(s)}: (\gamma(t_{\sigma(i)}),\gamma(t_{\sigma(i+1)})) \in \hat{\mathcal{P}}_\gamma \text{ for every } 1\le i\le n\}\, .$$
Moreover, for $\sigma$ as above, we set
$$
M_\sigma := \{j^1\gamma^s(t_1,\ldots,t_s): \gamma\in\mathcal{S} \text{ and } (t_1,\ldots,t_s) \in \mathbb{S}^{(s)}_{\sigma,\gamma}\}\, .
$$
\end{definition}
\noindent The set $\mathbb{S}^{(s)}_{\sigma,\gamma}$ is introduced to represent the natural domain of parameters on which the point sequence $\{\gamma(t_{\sigma(i)})\}_{i \in \mathbb{Z}}$ in $\Gamma$ constitutes an $n$-periodic orbit in $\gamma$. We refer to such an orbit as a $n$-periodic orbit of type $\sigma$. \\
In order to define the action corresponding to such an orbit, in the sequel we will use the notation explained right below. Let us denote by $(t_1,..,t_s,x_1,..,x_s,v_1,..,v_s)$ the coordinates in the $s$-fold bundle of $1$-jets $J^1_s(\mathbb{S},\mathbb{R}^2)$ so that, for some $j^1\gamma^s(t_1,\ldots,t_s) \in M_\sigma \subset J^1_s(\mathbb{S},\mathbb{R}^2)$, we have:
$$x_i = \gamma(t_i), \quad v_i = \gamma'(t_i)$$ 
for every $i \in \{1,\ldots,s\}$. $M_\sigma$ is an open submanifold of $J^1_s(\mathbb{S},\mathbb{R}^2)$ of dimension $5s$. \\

\noindent Let $n,s \in \mathbb{N}$, with $n \ge s$. For $\sigma$ as above and $(x_1, \ldots, x_s) \in \mathbb{R}^{2s}$,
\begin{equation} \label{elle sigma}
L_{\sigma}(x_1, \ldots , x_s) := \sum_{i = 1}^n \omega(x_{\sigma(i)},x_{\sigma(i+1)})\, ,\end{equation}
so that
\[ L_\sigma(\gamma(t_1),\ldots,\gamma(t_s)) := \sum_{i=1}^n \omega(\gamma(t_{\sigma(i)}),\gamma(t_{\sigma(i+1)}))\,.\]
The following proposition underlies the proof of Theorem \ref{T1}.  
\begin{proposition}\label{p1}
Let $n,s \in \mathbb{N}$, with $n > s$. For $\sigma$ as above there exists a submanifold $\Sigma_{\sigma} \subset J^1_s(\mathbb{S},\mathbb{R}^2)$ of codimension greater or equal than $s+1$ such that if  $\gamma\in \mathcal{S}$ and $\{\gamma(t_{\sigma(i)})\}_{i \in \mathbb{N}}$ is an $n$-periodic orbit in $\gamma$ then $j^1\gamma^s(t_1,\ldots,t_s) \in \Sigma_{\sigma}$.    
\end{proposition}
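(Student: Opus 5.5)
The plan is to follow \cite{Sto87} and cut out $\Sigma_\sigma$ inside the open manifold $M_\sigma$ by the equations expressing the symplectic reflection law at each distinct reflection point, written purely in jet coordinates. By \eqref{GF}, the sequence $\{\gamma(t_{\sigma(i)})\}$ is an $n$-periodic orbit exactly when, for every $i$,
\[
\omega\big(v_{\sigma(i)},\, x_{\sigma(i+1)}-x_{\sigma(i-1)}\big)=0 .
\]
For a fixed label $j\in\{1,\dots,s\}$, let $I_j=\sigma^{-1}(j)\cap\{1,\dots,n\}$. Since $\sigma$ is surjective and $n>s$, we have $\sum_j |I_j| = n \ge s+1$, so some label $j_0$ satisfies $|I_{j_0}|\ge 2$. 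At every label $j$ we get $|I_j|$ equations. Each says that $v_j$ is parallel to the chord $x_{\sigma(i+1)}-x_{\sigma(i-1)}$ for each $i\in I_j$.

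Define $\Sigma_\sigma$ as the set of points of $M_\sigma$ at which, for every $j$ and every $i\in I_j$, the function
\[
F_{j,i}(x,v)=\omega\big(v_j,\,x_{\sigma(i+1)}-x_{\sigma(i-1)}\big)
\]
vanishes. The task is then to show this is a submanifold of codimension at least $s+1$. Equivalently, it is enough to extract $s+1$ of the defining functions whose differentials are linearly independent along the zero set, after possibly stratifying into finitely many submanifolds; the union of finitely many submanifolds of codimension $\ge s+1$ is what the Multijet Transversality Theorem needs.

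For the counting, choose one index $i_j\in I_j$ for every $j$, and a second index $i'\in I_{j_0}\setminus\{i_{j_0}\}$. This gives $s+1$ functions. Their differentials with respect to the $v$-variables are as follows: $\partial_{v_j}F_{j,i}$ is the covector $w\mapsto\omega(w,\,x_{\sigma(i+1)}-x_{\sigma(i-1)})$. On $M_\sigma$ this covector is nonzero, because the points lie in $\hat{\mathcal P}_\gamma$ and are pairwise distinct, and $\sigma(i+1)\ne\sigma(i-1)$ follows from (b). Hence the $s$ functions $F_{j,i_j}$ have independent differentials, since they involve disjoint blocks $v_j$.

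The extra function $F_{j_0,i'}$ is handled by passing to $G=$ the combination saying that the two chords at $j_0$ are parallel,
\[
G=\omega\big(x_{\sigma(i_{j_0}+1)}-x_{\sigma(i_{j_0}-1)},\,x_{\sigma(i'+1)}-x_{\sigma(i'-1)}\big).
\]
On the zero set of the others, $G$ vanishes iff $F_{j_0,i'}$ does, since $v_{j_0}\neq0$ is parallel to the first chord. $G$ depends only on the $x$-variables, so its differential is independent of the $v$-block differentials provided $d_xG\ne0$.

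The main obstacle is to check that $d_xG\neq0$ at every point of the zero set. By (b), the unordered pairs $\{\sigma(i_{j_0}-1),\sigma(i_{j_0}+1)\}$ and $\{\sigma(i'-1),\sigma(i'+1)\}$ are distinct, because they arise from distinct consecutive pairs through $j_0$. Hence some endpoint, say $x_k$, appears in one chord but not the other. If $x_k$ appears in both with opposite roles, I would argue that the pairs cannot then coincide. Varying $x_k$ transversally to the other chord changes $G$ at first order, as long as that chord is nonzero, which holds because its endpoints are distinct points of $\Gamma$. The delicate cases are when a label appears twice among the four endpoints, for example $\sigma(i_{j_0}+1)=\sigma(i'-1)$. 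In that case I would compute $d_xG$ explicitly and, if it vanishes on a degenerate locus, remove that locus as a further stratum of even higher codimension. I would also treat the possibility that $x_k$ cancels between the two chords by noting that a cancellation would force the two chords to be equal as vectors, and deal with it separately.
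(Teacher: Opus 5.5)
Your argument is correct, but you organize the defining equations differently from the paper. The paper takes, for each label $i\ge 2$, only the summed criticality equation $f_i=\sum_{j\in\sigma^{-1}(i)}\omega(v_i,x_{\sigma(j+1)}-x_{\sigma(j-1)})$. It adds all $r$ individual reflection equations $g_h$ at one label visited $r\ge 2$ times. It detects $f_k$ by a variation of $v_k$ and $g_l$ by a variation of $x_{\sigma(j_l+1)}$ in the direction $J\gamma'(t_1)$, which gives a block-triangular Jacobian and codimension $s-1+r$. Because $f_k$ is a sum of several chord terms, the paper needs a geometric argument in the positive phase space $\hat{\mathcal P}_\gamma$ to know that $\partial_{V_k}f_k\neq 0$: all chords $x_{\sigma(j+1)}-x_{\sigma(j-1)}$, $j\in\sigma^{-1}(k)$, cross the line through $\gamma(t_k)$ and $\gamma(t_k^*)$ in the same direction.

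Your choice of one individual reflection equation per label removes that step, since you only need a single nonzero chord, i.e.\ distinctness of the reflection points. The cost is that on the zero set, the second equation at $j_0$ has $v_{j_0}$-gradient parallel to the first. Eliminating $v_{j_0}$ through the parallel-chords function $G$ is exactly the right remedy. You end up with codimension exactly $s+1$, which is all the dimension count in the Multijet Transversality argument uses. Condition (b) enters both proofs at the same place: for the paper it gives $\partial_{W_l}g_h=\delta_{lh}\|\gamma'(t_1)\|^2$, and for you it gives $dG\neq 0$.

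Two clean-ups are needed.

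First, the common zero set of all $n$ functions $F_{j,i}$ need not be a submanifold, and you do not need it to be. Define $\Sigma_\sigma$ as the zero set in $M_\sigma$ of the $s+1$ functions $F_{1,i_1},\dots,F_{s,i_s},G$. Their differentials are independent at every point of $M_\sigma$, not just on the zero set. So this is a submanifold of codimension $s+1$ with no stratification. It contains every orbit jet because on an orbit $v_{j_0}=\gamma'(t_{j_0})\neq 0$ is parallel to both chords at $j_0$, so $G=0$ there.

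Second, the ``delicate cases'' in your last paragraph do not occur. Once you pick a label $k$ lying in exactly one of the two chords, $\partial_{x_k}G$ is $\pm$ the covector $w\mapsto\omega(w,c)$, where $c\neq 0$ is the other chord. This holds whatever other coincidences there are. In fact (a) and (b) force the four labels $\sigma(i_{j_0}\pm1)$, $\sigma(i'\pm1)$ to be pairwise distinct. For instance, $\sigma(i_{j_0}+1)=\sigma(i'-1)$ would repeat the edge $\{j_0,\sigma(i_{j_0}+1)\}$ at positions $i_{j_0}$ and $i'-1$. These positions differ because $\sigma(i'-1)\neq j_0=\sigma(i_{j_0})$. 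So drop the fallback of discarding a degenerate locus ``of even higher codimension''; you had not justified it and you do not need it.
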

\begin{proof}
Since $\{\gamma(t_{\sigma(i)})\}_{i \in \mathbb{N}}$ is an orbit, it is necessarily critical for $L_{\sigma}$, that is:
\[ \partial_{t_i} L_{\sigma}(\gamma(t_1),...,\gamma(t_s)) = \sum_{j\in \sigma^{-1}(\{i\})} \omega(\gamma'(t_i),\gamma(t_{\sigma(j+1)})-\gamma(t_{\sigma(j-1)}))= 0 \qquad \forall i \in \{1,\ldots,s\}\, .\] 
Moreover, since by assumption $n > s$, there exists at least one index $i \in \{1,\ldots,s\}$  such that $\sigma^{-1}(\{i\}) = \{j_1,...,j_r\}$ for $r\ge 2$. Assume that $i=1$. Then, by the symplectic billiards dynamics, we have:
\[ \omega(\gamma'(t_1),\gamma(t_{\sigma(j_h +1)}) - \gamma(t_{\sigma(j_h -1)}))=0 \qquad \forall h \in \{1,\ldots,r\}\, .\]
These last two formulas suggest to consider the next equations on $M_\sigma$:
$$f_i(t,x,v) := \sum_{j\in \sigma^{-1}(\{i\})} \omega(v_i,x_{\sigma(j+1)}-x_{\sigma(j-1)})= 0 \qquad i \in \{2,\ldots,s\}\, ,$$
and
$$g_h(t,x,v) := \omega(v_1,x_{\sigma(j_h + 1)} - x_{\sigma(j_h - 1)}) = 0 \qquad h \in \{1,\ldots,r\}\, .$$
We proceed to prove that $f_i,g_h$ are independent on $M_\sigma$. \\
For $P := j^1\gamma^s(t_1,\ldots,t_s) \in M_\sigma$, we identify $T_PM_\sigma$ with $\mathbb{R}^{5s}$ and we say that 
$V \in T_PM_\sigma$ lies in the $x_k$-plane (resp. $v_k$-plane) if all the components of $V$ are zero except ones corresponding to $x_k$ (resp. $v_k$). Then, for every $k \in \{2,\ldots,s\}$, we consider the vector $V_k$ in the $v_k$-plane given by $v_k = \gamma(t_k)-\gamma(t_k^*)$; and, for every $l \in \{1,\ldots,r\}$, we consider the vector $W_l$ in the $x_{\sigma(j_l +1)}$-plane given by $x_{\sigma(j_l +1)} = J \gamma'(t_1)$. \\
From the one hand, we obtain:
\begin{eqnarray*}
\partial_{V_k} f_i(t,x,v) &=& \delta_{k i} \sum_{j \in \sigma^{-1}(\{i\})} \omega(
\gamma(t_k)-\gamma(t_k^*),\gamma(t_{\sigma(j+1)}) - \gamma(t_{\sigma(j-1)}) \\
&=& \sum_{j \in \sigma^{-1}(\{i\})} \omega(
\gamma(t_i)-\gamma(t_i^*),\gamma(t_{\sigma(j+1)}) - \gamma(t_{\sigma(j-1)})\, .
\end{eqnarray*}
We observe that this last sum cannot be $0$. In fact, the points $\gamma(t_i)$ and $\gamma(t_i^*)$ divide $\gamma$ into two arcs. In particular, for any $j \in \sigma^{-1}(\{i\})$, $\gamma(t_{\sigma(j+1)})$ belongs to the arc joining counterclockwise $\gamma(t_i)$ and $\gamma(t_i^*)$ and $\gamma(t_{\sigma(j-1)})$ belongs to the arc joining counterclockwise $\gamma(t_i^*)$ and $\gamma(t_i)$. Consequently, $\gamma(t_{\sigma(j+1)}) - \gamma(t_{\sigma(j-1)})$ is always transverse to the vector $v_i = \gamma(t_i) - \gamma(t_i^*)$ and always pointing to the same side. Hence its orthogonal projection along the vector $Jv_i$ is always positive. This assures that $\partial_{V_k} f_i(t,x,v) \neq 0$ when $k =i$. Moreover, $\partial_{V_k} g_h(t,x,v) = 0$ since the functions \(g_h\) do not depend on the variables \(v_k\) for \( k\ge2\). \\
From the other hand, 
$$\partial_{W_l} g_h(t,x,v) = \delta_{lh}\omega(\gamma'(t_1), J\gamma'(t_1)) = \|\gamma'(t_1)\|^2 \ne 0\, .$$
From the independence of $f_i,g_h$ on $M_\sigma$, we obtain that 
$$\Sigma_\sigma := \left\{P \in M_\sigma \text{ such that } f_i(P) = 0 = g_h(P), \ \forall i \in \{2,\dots,s\} \text{ and } \forall h \in \{1, \dots, r\}\right\}$$ 
is the desired submanifold of \(M_\sigma\) of codimension $s-1+r \ge s+1$.
\end{proof}
\noindent From the previous proposition --combined with the Multijet Transversality Theorem (see \cite{GG73} at Page 57, Theorem 4.13 and Lemma 4.14)-- the proof of Theorem \ref{T1} easily follows.
\begin{proof}[Proof of Theorem \ref{T1}]
Let $n, s \in \mathbb{N}$, with $n > s$. We first prove that, for $\sigma$ as above, there exists a residual set $V_{\sigma} \subset \mathcal{S}$ such that if $\gamma\in V_\sigma$ then $\gamma$ has no $n$-periodic orbits of type $\sigma$. Let
\begin{eqnarray*}
J^1_s: \mathbb{S}^{(s)} \times \mathcal{S} &\to& J^1_s(\mathbb{S},\mathbb{R}^2) \\
\left((t_1,\ldots,t_s),\gamma\right) &\mapsto& j^1\gamma^s(t_1,\ldots,t_s)
\end{eqnarray*}
Fixed $\gamma \in \mathcal{S}$, $J^1_s(\cdot,\gamma) \subset J^1_s(\mathbb{S},\mathbb{R}^2)$ is an $s$-dimensional submanifold of $J^1_s(\mathbb{S},\mathbb{R}^2)$. 
For $\sigma$ as above, let $\Sigma_\sigma$ be the submanifold of Proposition \ref{p1}, which has codimension greater or equal than $s+1$. Choose a countable collection $\{\Sigma_{\sigma,m}\}_{m \in \mathbb{N}}$ of compact submanifolds of $\Sigma_\sigma$ such that $\bigcup_{m \in \mathbb{N}} \Sigma_{\sigma,m} = \Sigma_\sigma$. By the Multijet Transversality Theorem, for every $m \in \mathbb{N}$, there exists an open and dense subset $V_{\sigma,m} \subset \mathcal{S}$ such that $J^1_s(\cdot,\gamma)$ is transverse to $\Sigma_\sigma$ for every $\gamma \in V_{\sigma,m}$. Then define $V_\sigma := \bigcap_{m \in \mathbb{N}} V_{\sigma,m}$. The set $V_\sigma$ is a countable intersection of open and dense sets, that is $V_\sigma$ is a residual set, such that 
$$V_\sigma = \{\gamma \in \mathcal{S}: J^1_s(\cdot,\gamma) \pitchfork \Sigma_\sigma\}\, .$$
However, due to dimensional considerations, we have that  
$$V_\sigma = \{\gamma \in \mathcal{S}: J^1_s(\cdot,\gamma) \cap \Sigma_\sigma = \emptyset\}\, ,$$
which is equivalent to saying that every $\gamma \in V_\sigma$ has no periodic orbits of type $\sigma$. The conclusion is then obtained by setting \(V = \bigcap_{\sigma} V_\sigma\), where 
$\sigma$ corresponds to $n, s \in \mathbb{N}$, with $n > s$. This set \(V\) is residual, since it is a countable intersection of residual sets. 
\end{proof}

\noindent The same technique will be used to prove Theorem \ref{T2}, regarding pairs of distinct periodic orbits without common reflection points. \\ \\
\noindent Given $n_1, n_2, s \in \mathbb{N}$ with $n_1 + n_2 \ge s$, let
$$\sigma_j: \{1,\ldots,n_j\}\rightarrow \{1,\ldots,s\}, \quad i \mapsto \sigma_j(i), \qquad j = 1,2\, ,$$
be maps satisfying
\begin{enumerate}
\item[$(a)$] $\sigma_j(i) \neq \sigma_j(i+1)$ for every $i \in \{1,\ldots,n_j\}$ and $j = 1,2 $;
\item[$(b)$] $\{\sigma_1(i),\sigma_1(i+1)\} \neq \{\sigma_2(i'),\sigma_2(i'+1)\}$ for $1 \le i \le n_1$ and $1\le i' \le n_2$;
\item[$(c)$] $\text{Im}(\sigma_1) \cup \text{Im}(\sigma_2)=\{1,...,s\}$.
\end{enumerate}
As previously, for $j = 1,2$, we still indicate by $\sigma_j$ the natural $n_j$-periodic extension to this map to $\mathbb{Z}$. The maps $\sigma_1, \sigma_2$ describe respectively a $n_1$-periodic orbit and a $n_2$-periodic orbit in a given table $\gamma \in \mathcal{S}$. Since $\text{Im}(\sigma_1) \cup \text{Im}(\sigma_2)=\{1,...,s\}$, if these orbits have common reflection points then $n_1 + n_2 > s$. Otherwise, $n_1 + n_2 = s$. \\
We now introduce the following definition.
\begin{definition}
For $\sigma_1, \sigma_2$ as above and $\gamma \in \mathcal{S}$, we set 
$$\mathbb{S}^{(s)}_{\sigma_1,\sigma_2,\gamma} := \{(t_1,\dots,t_s) \in \mathbb{S}^{(s)}: (\gamma(t_{\sigma_j(i)}),\gamma(t_{\sigma_j(i+1)})) \in \hat{\mathcal{P}}_\gamma \text{ for every } 1\le i\le n_j \text{ and } j=1,2\}\, .$$
Moreover, for $\sigma_1, \sigma_2$ as above, we set
$$
M_{\sigma_1,\sigma_2} := \{j^1\gamma^s(t_1,\ldots,t_s): \gamma\in\mathcal{S} \text{ and } (t_1,\ldots,t_s) \in \mathbb{S}^{(s)}_{\sigma_1,\sigma_2,\gamma}\}\, .
$$
\end{definition}
\begin{proposition} Let $n_1, n_2, s \in \mathbb{N}$, with $n_1+n_2 > s$. 
For $\sigma_1, \sigma_2$ as above there exists a submanifold $\Sigma_{\sigma_1,\sigma_2} \subset J^1_s(\mathbb{S},\mathbb{R}^2)$ of codimension greater or equal than $s+1$ such that if $\gamma\in \mathcal{S}$ and, for $j =1,2$, $\{\gamma(t_{\sigma_j(i)})\}_{i \in \mathbb{N}}$ is an $n_j$-periodic orbit in $\gamma$ then $j^1\gamma^s(t_1,\ldots,t_s) \in \Sigma_{\sigma_1,\sigma_2}$.  
\end{proposition}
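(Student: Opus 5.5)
We follow the template of the proof of Proposition~\ref{p1}: we write down the equations that two periodic orbits of types $\sigma_1,\sigma_2$ must satisfy on $M_{\sigma_1,\sigma_2}$, and then check that enough of them are independent. Since $\text{Im}(\sigma_1)\cup\text{Im}(\sigma_2)=\{1,\dots,s\}$, every index $i$ lies in the image of at least one $\sigma_j$. For each $i$ we pick such a $j$ and use the criticality condition of $L_{\sigma_j}$ with respect to $t_i$. Writing $\sigma^{-1}_j(\{i\})$ for the preimage, the function on $M_{\sigma_1,\sigma_2}$ is
$$f_i(t,x,v):=\sum_{k\in\sigma_j^{-1}(\{i\})}\omega\bigl(v_i,x_{\sigma_j(k+1)}-x_{\sigma_j(k-1)}\bigr)=0 .$$
This gives $s$ equations. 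The hypothesis $n_1+n_2>s$ is needed for the extra equations. Counting preimages, there is either an index visited twice by one orbit (the situation of Proposition~\ref{p1}) or an index common to both orbits. After relabelling, call this index $1$ and let $j_1,\dots,j_r$ ($r\ge2$) be all its visits, counted over both orbits. The billiard reflection law must hold at each visit separately. So we replace $f_1$ by the $r$ equations
$$g_h:=\omega\bigl(v_1,x_{\sigma_{j(h)}(j_h+1)}-x_{\sigma_{j(h)}(j_h-1)}\bigr)=0,\qquad h=1,\dots,r,$$
where $\sigma_{j(h)}$ is the orbit making the visit $j_h$. This yields $s-1+r\ge s+1$ equations, and $\Sigma_{\sigma_1,\sigma_2}$ is defined as their common zero set.

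Independence is then shown with the same test vectors as in Proposition~\ref{p1}. For $k\ge2$ take $V_k$ in the $v_k$-plane equal to $\gamma(t_k)-\gamma(t_k^*)$. Then $\partial_{V_k}f_i=0$ for $i\ne k$ and $\partial_{V_k}g_h=0$. Also $\partial_{V_k}f_k\ne0$, because every chord $x_{\sigma_j(k+1)}-x_{\sigma_j(k-1)}$ crosses the line through $\gamma(t_k)$ and $\gamma(t_k^*)$ in the same direction. This uses that consecutive pairs lie in $\hat{\mathcal P}_\gamma$, which is built into $\mathbb S^{(s)}_{\sigma_1,\sigma_2,\gamma}$. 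For the $g_h$ we use test vectors $W_l$ in the $x_{m_l}$-plane equal to $J\gamma'(t_1)$, where $m_l=\sigma_{j(l)}(j_l+1)$ is the successor index of the visit $j_l$.

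The main obstacle is this last step. We need the successor indices $m_1,\dots,m_r$ to be pairwise distinct, and we need the variations $W_l$ not to spoil the triangular structure, since moving $x_{m_l}$ also perturbs some $f_i$. The first issue is handled by condition $(b)$ together with the analogous condition within each orbit: two visits of point $1$ with the same successor would produce the same pair $\{1,m\}$ twice. If needed we may instead use predecessors, or, when distinctness fails, note that the two orbits would then share a link and hence coincide. The second issue is resolved by ordering. The matrix of derivatives of $(g_1,\dots,g_r,f_2,\dots,f_s)$ along $(W_1,\dots,W_r,V_2,\dots,V_s)$ is block triangular: the $g_h$ do not depend on $v_k$ for $k\ge2$, so the $V$-columns vanish on the $g$-rows, while the $W$-block is diagonal with entries $\|\gamma'(t_1)\|^2\ne0$ and the $V$-block is diagonal with nonzero entries. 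Whatever the $W_l$ do to the $f_i$ lies off the diagonal blocks, so the whole matrix has full rank $s-1+r$. Hence $\Sigma_{\sigma_1,\sigma_2}$ is a submanifold of codimension at least $s+1$. Finally, any pair of periodic orbits of types $\sigma_1,\sigma_2$ in $\gamma$ has its $1$-jet in $\Sigma_{\sigma_1,\sigma_2}$, because each of the equations above is the reflection law of Proposition~\ref{properties}.
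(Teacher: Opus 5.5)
Your proposal is correct and follows essentially the same route as the paper: the criticality equations $f_i$ for $i\ge 2$, plus reflection-law equations $g_h$ at each visit of the repeated index, with independence shown by test vectors in the $v_k$-planes and the $x_{m_l}$-planes, giving codimension $s-1+r\ge s+1$. One small point is that diagonality of the $W$-block also needs no successor $m_l$ to equal the predecessor of any visit of $1$ (otherwise $\partial_{W_l}g_h\neq 0$ for some $h\neq l$, or the diagonal entry vanishes), but this is excluded by the same link-distinctness condition $(b)$ you already invoke, since it would mean the link $\{1,m_l\}$ is traversed twice.
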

\begin{proof} Since $\{\gamma(t_{\sigma_1(i)})\}_{i \in \mathbb{N}}$ and $\{\gamma(t_{\sigma_2(i)})\}_{i \in \mathbb{N}}$ are orbits, $\partial_{t_i} L_{\sigma_1,\sigma_2}(\gamma(t_1),...,\gamma(t_s))=0$ for every $i=1,\ldots,s$, where
 \[ L_{\sigma_1,\sigma_2}(\gamma(t_1),\ldots,\gamma(t_s)) := \sum_{j=1,2} \sum_{i=1}^{n_j} \omega(\gamma(t_{\sigma_j(i)}),\gamma(t_{\sigma_j(i+1)}))\, .\]
That is
\[ \sum_{j=1,2} \sum_{h \in \sigma_j^{-1}(\{i\})} \omega(\gamma'(t_{i}),\gamma(t_{\sigma_j(h+1)})-\gamma(t_{\sigma_j(h-1)})) =0 \qquad \forall i \in \{1,\ldots,s\}\, .\]
Moreover, since by assumption $n_1 + n_2 > s$, there exists at least one index $i \in \{1,\ldots,s\}$, say $i=1$, such that, by the symplectic billiards dynamics, we have:
\[ \omega(\gamma'(t_1),\gamma(t_{\sigma_1(h+1)})-\gamma(t_{\sigma_1(h-1)})) = 0 \qquad \forall h \in\sigma^{-1}_1(1) \]
and
\[ \omega(\gamma'(t_1),\gamma(t_{\sigma_2(h+1)})-\gamma(t_{\sigma_2(h-1)})) = 0 \qquad \forall h \in\sigma^{-1}_2(1)\, .\]
Following the same strategy of the proof of Proposition \ref{p1}, we consider the next equations on $M_{\sigma_1,\sigma_2}$:
$$f_i(t,x,v) := \sum_{j=1,2} \sum_{h \in \sigma_j^{-1}(\{i\})} \omega(v_i,x_{\sigma_j(h+1)}-x_{\sigma_j(h - 1)}) =0 \qquad i \in \{2,\ldots,s\}\, ,$$
$$g_1(t,x,v) := \omega(v_1,x_{\sigma_1(h+1)} - x_{\sigma_1(h-1)})=0 \qquad h \in\sigma^{-1}_1(1)$$
and
$$g_2(t,x,v) := \omega(v_1,x_{\sigma_2(h+1)} - x_{\sigma_2(h-1)})=0 \qquad h \in\sigma^{-1}_2(1)\, .$$
By the same argument of the proof of Proposition \ref{p1}, we obtain that $f_i$, $g_1$ and $g_2$ are independent on $M_{\sigma_1,\sigma_2}$. Consequently:
$$\Sigma_{\sigma_1,\sigma_2} := \left\{P \in M_\sigma \text{ such that } f_i(P) = 0 = g_j(P), \ \forall i \in \{2,\dots,s\} \text{ and } \forall j = 1,2\right\}$$
is the desired submanifold of \(M_\sigma\) of codimension greater or equal than $s - 1 + 2 =  s + 1$.
\end{proof}
\begin{proof}[Proof of Theorem \ref{T2}] Same construction as in the proof of Theorem \ref{T1}.
\end{proof}

\section{Generic non-degeneracy of symplectic period orbits} \label{section 4}

In this section, we prove two results about non-degeneracy of symplectic periodic orbits in generic billiard tables: in particular, we prove that --generically-- symplectic periodic orbits are non-degenerate (Theorem \ref{Nondeg}) and have at least one non self-conjugate point (Theorem \ref{T4}). To enter into details, we need to recall some definitions. \\ \\
\noindent Thanks to the parametrization $\gamma: \mathbb{S} \to \Gamma$, it is possible to identify the tangent bundle $T\Gamma$ with $\Gamma \times \mathbb{R}$ by choosing the section $(\gamma(t),\gamma'(t))$ as a basis. Let $(t_i,s_i)_{i\in \mathbb{Z}}$ be a billiard orbit, which means  that $T_\gamma(t_i,s_i)=(t_{i+1},s_{i+1})$ for every $i \in \mathbb{Z}$ (as usual, for negative $i$ we mean the iterate of the inverse map $T_\gamma^{-1}$). 
\begin{definition} [Jacobi field and conjugate points] \label{JACOBI}
$(i)$ A sequence $\{\xi_i\}_{i \in \mathbb{Z}}$, $\xi_i \in T_{\gamma(t_i)} \Gamma$ is called a Jacobi field along $\{t_i\}_{i \in \mathbb{Z}}$ if it satisfies:
\begin{equation} \label{jacobi}
 L_{12}(t_{i-1}, t_i) \xi_{i-1} + [L_{22}(t_{i-1}, t_i) + L_{11}(t_{i}, t_{i+1})] \xi_i + L_{12}(t_{i}, t_{i+1}) \xi_{i+1} = 0   
\end{equation}
for all $i \in \mathbb{Z}$. \\
~\newline
\noindent 
$(ii)$ Let $M < N$. Two points $t_M$, $t_N$ of a symplectic billiard configuration are called conjugate if there is a non-zero Jacobi field vanishing at $t_M$ and $t_N$. 
\end{definition}
\noindent Jacobi fields are genuine vector fields defined along billiard trajectories. More precisely, given a trajectory $\{(t_i,s_i)\}_{i \in \mathbb{Z}}$, take $(\xi_1,\eta_1) \in T_{(t_1,s_1)} \mathcal{P}_\gamma$, a tangent vector to the phase space at the initial point $(t_1,s_1)$. Then propagate this vector through the billiard map by setting $(\xi_2,\eta_2):=DT_\gamma(t_1,s_1) \cdot (\xi_1,\eta_1)$ and inductively $(\xi_i,\eta_i):=DT_\gamma(t_{i-1},s_{i-1}) \cdot (\xi_{i-1},\eta_{i-1})$. The resulting sequence $\{\xi_i\}_{i \in \mathbb{Z}}$ defines a Jacobi field. Conversely, every Jacobi field is obtained in this way. \\ \\
In order to prove the results of this section, we need to introduce the $n$-fold bundle of $2$-jets $J^2_n(\mathbb{S},\mathbb{R}^2)$. Given $\mathcal{S} \ni \gamma: \mathbb{S} \to \Gamma$, the $2$-jet of $\gamma$ at a point $t \in \mathbb{S}$ is the $7$-vector:
$$j^2\gamma(t) := (t,\gamma(t),\gamma'(t),\gamma''(t))\, .$$
More generally:
\begin{eqnarray*}
j^2\gamma^n(t_1,\ldots,t_n) & := & \left(j^2\gamma(t_1),\ldots,j^2\gamma(t_n)\right) \\
& \cong &\left(t_1,\ldots,t_n,\gamma(t_1),\ldots,\gamma(t_n),\gamma'(t_1),\ldots,\gamma'(t_n),\gamma''(t_1),\ldots,\gamma''(t_n)\right)\, .
\end{eqnarray*}
\begin{definition} \label{def 1}
For $\gamma \in \mathcal{S}$, we set
$$\mathbb{S}^{(n)}_{\gamma} := \{(t_1,\dots,t_n) \in \mathbb{S}^{(n)}: (\gamma(t_{i}),\gamma(t_{i+1})) \in \hat{\mathcal{P}}_\gamma \text{ for every } 1\le i\le n\}\, .$$
Moreover:
$$
M := \{j^2\gamma^n(t_1,\ldots,t_n): \gamma\in\mathcal{S} \text{ and } (t_1,\ldots,t_n) \in \mathbb{S}^{(n)}_{\gamma}\}\, .
$$
\end{definition}
\noindent In the sequel we denote by $(t_1,..,t_n,x_1,..,x_n,v_1,..,v_n,a_1,...,a_n)$ the coordinates in the $n$-fold bundle of $2$-jets $J^2_n(\mathbb{S},\mathbb{R}^2)$ so that, for some $j^2\gamma^n(t_1,\ldots,t_n) \in M \subset J^2_n(\mathbb{S},\mathbb{R}^2)$, we have:
$$x_i = \gamma(t_i), \quad v_i = \gamma'(t_i), \quad a_i = \gamma''(t_i)$$ 
for every $i \in \{1,\ldots,n\}$. $M$ is an open submanifold of $J^2_n(\mathbb{S},\mathbb{R}^2)$ of dimension $7n$. \\ \\
\noindent For $\gamma \in \mathcal{S}$, let now $\mathcal{O} = \{ (t_1,s_1), (t_2,s_2), \ldots , (t_n,s_n) \}$ be a $n$-periodic orbit of $T_\gamma$, in particular $n \in \mathbb{N}$ is the smallest period of such a orbit. 
\begin{definition} [Nondegenerate periodic orbit]
We say that $\mathcal{O}$ is nondegenerate if $DT_\gamma^n(t_1,s_1)$ has no $1$ as an eigenvalue. 
\end{definition} 
\indent Applying the Multijet Transversality Theorem as for Theorems \ref{T1} and \ref{T2}, the next result easily follows.
\begin{theorem}\label{Nondeg}
There exists a residual set $\mathcal{T}_3 \subset \mathcal{S}$ such that every periodic orbit in $\gamma \in \mathcal{T}_3$ is non-degenerate.
\end{theorem}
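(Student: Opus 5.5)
By Theorem \ref{T1} we may restrict to $\gamma\in\mathcal{T}_1$, so every $n$-periodic orbit visits $n$ distinct points $t_1,\dots,t_n$ and $(t_1,\dots,t_n)\in\mathbb{S}^{(n)}_\gamma$. The plan is then to repeat the multijet argument of Theorem \ref{T1}, now in the $2$-jet bundle $J^2_n(\mathbb{S},\mathbb{R}^2)$. We will show that the configurations giving degenerate $n$-periodic orbits lie in a submanifold $\Sigma\subset M$ of codimension at least $n+1$. The Multijet Transversality Theorem then gives, for each $n$, a residual set of $\gamma$ for which $j^2\gamma^n(\mathbb{S}^{(n)}_\gamma)$, of dimension $n$, misses $\Sigma$. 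Intersecting over $n\ge 3$ and with $\mathcal{T}_1$ gives $\mathcal{T}_3$.

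To describe $\Sigma$, note that $L_{11},L_{12},L_{22}$ at consecutive points are expressible in jet coordinates:
\[
L_{12}(t_i,t_{i+1})=\omega(v_i,v_{i+1}),\qquad L_{11}(t_i,t_{i+1})=\omega(a_i,x_{i+1}),\qquad L_{22}(t_{i-1},t_i)=\omega(x_{i-1},a_i).
\]
\textbf{Orbit equations.} The $n$ equations $f_i:=\omega(v_i,x_{i+1}-x_{i-1})=0$ express that $(t_i)$ is a periodic orbit. They involve only $x,v$, and as in Proposition \ref{p1} they are independent: perturbing $v_i$ in the direction $\gamma(t_i)-\gamma(t_i^*)$ changes only $f_i$, and nontrivially.

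\textbf{Degeneracy equation.} By Lemma \ref{formula DT lambda} and the correspondence with Jacobi fields, degeneracy (eigenvalue $1$) means there is a nonzero $n$-periodic Jacobi field. Equivalently, the cyclic tridiagonal matrix $H$ with diagonal $b_i=\omega(x_{i-1},a_i)+\omega(a_i,x_{i+1})=\omega(a_i,x_{i+1}-x_{i-1})$ and off-diagonal entries $c_i=\omega(v_i,v_{i+1})\neq0$ (the Hessian of the action $L_\sigma$) is singular. So we add the equation $g:=\det H=0$.

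The main step is to show $dg\neq0$ on $\{f=0,\ g=0\}$ along a direction that does not move the $f_i$. The natural choice is to vary $a_1$ only, since the $f_i$ do not depend on the $a$'s. Varying $a_1$ by $\delta\,J(x_2-x_0)$ changes $b_1$ by $\delta\,|x_2-x_0|^2\neq0$ (because $x_2\ne x_0$) and leaves all other entries unchanged. Hence
\[
\partial g=\delta\,|x_2-x_0|^2\,\det H_{\hat 1},
\]
where $H_{\hat 1}$ is $H$ with row and column $1$ removed. This is the expected obstacle: $\det H_{\hat1}$ may vanish. If the kernel of $H$ is one-dimensional with kernel vector $\xi$, then by Jacobi's formula $\partial_{b_i}\det H=\mathrm{adj}(H)_{ii}\propto \xi_i^2$. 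Since $\xi\neq0$, some $\xi_j\neq0$, and we can vary $a_j$ instead of $a_1$. If the kernel is two-dimensional, the set is cut out by more equations; we can either stratify by the rank of $H$ (the rank-$\le n-2$ stratum has even higher codimension) or simply note that $\Sigma$ is a finite union of submanifolds each of codimension $\ge n+1$.

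It is also necessary that this condition is really a condition on jets of actual curves. Since $a_j$ ranges freely in $\mathbb{R}^2$ subject only to the open strong-convexity constraint $\omega(v_j,a_j)>0$, and the chosen perturbation direction stays in this open set for small $\delta$, $M$ is open in the jet space and the variation is admissible. Counting, $\Sigma$ has codimension $n+1>n=\dim \mathbb{S}^{(n)}_\gamma$, so transversality forces an empty intersection. This covers orbits of all periods $n\ge 2$, with the boundary cases excluded by the definition of the open phase space.
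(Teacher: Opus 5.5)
Your proposal is correct and follows the paper's proof. Both restrict to orbits in general position via Theorem~\ref{T1}, impose the $n$ orbit equations $f_i=0$ together with $\det\mathcal{H}=0$ on the $2$-jet multijet space, show the resulting set has codimension at least $n+1>n$, and conclude with the Multijet Transversality Theorem.

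The only difference is how the extra codimension is certified. You compute $dg$ explicitly through Jacobi's formula, $\partial_{b_j}\det H=\mathrm{adj}(H)_{jj}\propto\xi_j^2$, on the corank-one locus. The paper instead invokes an analytic stratification of $\Sigma_n$ and the non-vanishing leading term $\prod_i B_i$ of $\det\mathcal{H}$ in the $a$-variables. One point in your version is only asserted: on the corank-two locus $\mathrm{adj}(H)=0$, hence $dg=0$, so "even higher codimension" needs its own short justification. For example, corank two means $DT^n_\gamma=\mathrm{Id}$, which imposes three independent conditions on $(b_1,b_2,b_3)$ by the computation in Theorem~\ref{Franks}.
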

\begin{proof} 
Let $n \ge 3$. We first notice that, since $\mathcal{O} = \{ (t_1,s_1), (t_2,s_2), \ldots , (t_n,s_n) \}$ is a $n$-periodic orbit, the dynamical conditions:
\begin{equation} \label{con2}
\omega(\gamma'(t_i),\gamma(t_{i+1}) - \gamma(t_{i-1})) = 0 \qquad \forall i \in \{1,\ldots,n\}
\end{equation}
are satisfied. Moreover, in terms of tangent vectors, this orbit is degenerate if there exists $(\xi_1,\eta_1) \in T_{(t_1,s_1)}\mathcal{P}_\gamma$ such that
\begin{equation} \label{eigenvec}
DT_\gamma^n(t_1,s_1) \cdot (\xi_1,\eta_1) = (\xi_1,\eta_1)\, .   
\end{equation} 
Let us extend $\mathcal{O}$ by periodicity, setting $(t_{i+nk},s_{i+nk}) = (t_i,s_i)$ for every integer $k$. Then consider the associated Jacobi field $\{\xi_i\}_{i \in \mathbb{Z}}$ defined by $(\xi_i,\eta_i) := DT_\gamma(t_{i-1},s_{i-1}) \cdot (\xi_{i-1},\eta_{i-1})$. By \eqref{eigenvec} this field is periodic with period $n$. Now by \eqref{jacobi} and by the periodicity condition we have that $\{\xi_i\}_{i \in \mathbb{N}}$ must satisfy the following linear system:
\begin{equation}\label{matrice}
\underbrace{\begin{pmatrix}
    B_1 & l_2 & 0 & \cdots & 0 & l_{1} \\
    l_2 & B_2 & l_3 & 0 & \cdots & 0 \\
    0 & l_3 & B_3 & l_4 & 0 & \cdots \\
    \vdots & \vdots & \vdots & \vdots & \vdots & \vdots \\
    l_1 & 0 & \cdots & 0 & l_{n} & B_n
\end{pmatrix}}_{:= \mathcal{H}}
\begin{pmatrix}
    \xi_1 \\
    \xi_2 \\
    \xi_3 \\
    \vdots \\
    \xi_n
\end{pmatrix}
=
\begin{pmatrix}
    0 \\
    0 \\
    0\\
    \vdots \\
    0
\end{pmatrix}
\end{equation}
where
$$
B_i=\omega(\gamma''(t_i),\gamma(t_{i+1})-\gamma(t_{i-1})) \quad \text{ and } \quad l_i=\omega( \gamma'(t_{i-1}),\gamma'(t_{i}))
$$
for every $i \in \{1, \ldots , n\}$ (note that, in writing the linear system, we took into account that $B_i$ and $l_i$, as well as $t_i$ and $\xi_i$, are $n$-periodic). Clearly we have that $\mathcal{O} = \{(t_1,s_1),...,(t_n,s_n)\}$ is degenerate if and only if 
\begin{equation} \label{con1}
\det \mathcal{H} = 0\, .
\end{equation}
We now proceed as in the proof of Theorems \ref{T1} and \ref{T2}. In particular, starting from conditions (\ref{con2}) and (\ref{con1}), we consider the next equations on $M$:
$$f_i(t,x,v,a) := \omega(v_i,x_{i+1}-x_{i-1}) = 0 \qquad i \in \{1,\ldots,n\}\, ,$$
and
$$g(t,x,v,a) := \det \mathcal{H} (t,x,v,a) = 0\, .$$
We remark that the $f_i$'s are independent on $M$ for the same argument as in the proof of Proposition \ref{p1}. Now, for $P := j^2\gamma^n(t_1,\ldots,t_n) \in M$, let 
$$\Sigma_n := \left\{P \in M \text{ such that } f_i(P) = 0 = g(P), \ \forall i \in \{1,\dots,n\}\right\}\, .$$ 
Since the equations defining $\Sigma_n$ are $C^\omega$, and the $f_i$'s are independent on $M$, we have that $\Sigma_n$ is a union of submanifolds $\Sigma^j$ of codimension $j \ge n$, see for instance Proposition 14.2.12 in \cite{Treves2022}. In more detail, we notice that this union extends over submanifolds of codimension $j > n$. This follows from the facts that the $f_i$'s are independent of $a$, while $g$ is a polynomial in the variables $a$ whose leading term --see formula (\ref{matrice})-- is $B_1 \cdots B_n=\prod_{i=1}^n \omega(a_i,x_{i+1}-x_{i-1})$, which never vanishes since $x_{i+1} \ne x_i$. Consequently:
$$\Sigma_n = \bigcup_{j=n+1}^{7n} \Sigma^j\, .$$
To complete the proof, we apply the Multijet Transversality Theorem with respect to each submanifold $\Sigma^j$, thus obtaining a family of residual sets $V_j$, whose intersection gives a set $V_n$ such that every $n$-periodic orbit in $\gamma \in V_n$ is
non-degenerate. Finally, by taking the countable intersection of residual sets $\bigcap_{n \ge 3} V_n$, we get the conclusion. 
\end{proof}
\begin{remark}
We stress that, through the previous proof, we restricted to the case where the periodic orbit passes –within one period– through the same point exactly once. In fact, if this case doesn't occur, the condition on the rank of the system of equations defining $\Sigma_n$ is satisfied without the need of the non-degeneracy condition. 
\end{remark}
\indent We proceed now to recall the notion of elliptic periodic point.  
\begin{definition} [Elliptic periodic point] \label{elliptic def} Given a $n$-period orbit $\mathcal{O} = \{ (t_1,s_1), (t_2,s_2), \ldots , (t_n,s_n) \}$ of $T_\gamma$, we say that $(t_1,s_1) \in \mathcal{P}_\gamma$ is an elliptic $n$-periodic point if $DT^n_\gamma(t_1,s_1)$ has eigenvalues $e^{\pm i \theta}$ for some $\theta \in (0,2\pi) \setminus \{\pi\}$. In particular, we say that $(t_1,s_1)$ is non-resonant if $q \theta \notin 2\pi\mathbb{Z}$ for $q=2,3,4,\dots$. 
\end{definition}
\noindent By \eqref{jacobi} and by the periodicity condition, we then have that $(t_1,s_1)$ is non-resonant if and only if
\[ \det
\underbrace{\begin{pmatrix}
    B_1 & l_2 & 0 & \cdots & 0 & l_{n}e^{\mp i\theta} \\
    l_2 & B_2 & l_3 & 0 & \cdots & 0 \\
    0 & l_3 & B_3 & l_4 & 0 & \cdots \\
    \vdots & \vdots & \vdots & \vdots & \vdots & \vdots \\
    l_1 e^{\pm i\theta}& 0 & \cdots & 0 & l_n & B_n
\end{pmatrix}}_{:= \mathcal{H}_\theta}
=0
\]
With the same arguments as for the proof of the previous Theorem \ref{Nondeg} (only adding the condition $\det\mathcal{H}_\theta = 0$ to $\det\mathcal{K} = 0$), we can then prove the following result.
\begin{theorem}\label{nonresonant}
There exists a residual set $\mathcal{T}_4 \subset \mathcal{S}$ such that every periodic orbit in $\gamma \in \mathcal{T}_4$ is non-degenerate and every elliptic periodic point is non-resonant.
\end{theorem}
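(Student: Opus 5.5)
The plan mirrors the proof of Theorem \ref{Nondeg}, now carried out for each pair $(n,q)$ with $n\ge 3$ and $q\ge 2$. Fix such $n,q$ and let $\theta_{p}=2\pi p/q$ for $p\in\{1,\ldots,q-1\}$ with $\theta_p\neq\pi$. These are finitely many values.

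A resonant elliptic $n$-periodic point has $DT^n_\gamma(t_1,s_1)$ with eigenvalue $e^{i\theta_p}$ for some such $p$. Propagating a corresponding complex eigenvector gives a complex Jacobi field with $\xi_{i+n}=e^{i\theta_p}\xi_i$. Writing \eqref{jacobi} for $i=1,\ldots,n$ and using this twisted periodicity yields $\mathcal{H}_{\theta_p}\xi=0$. Hence $\det\mathcal{H}_{\theta_p}=0$, where $\mathcal{H}_{\theta_p}$ is the Hermitian matrix displayed before the statement. Its corner entries are $l_1e^{\pm i\theta_p}$; I will check the indexing, since in the display one corner reads $l_n$ where it should read $l_1$.

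Because $\mathcal{H}_{\theta_p}$ is Hermitian, $g_p:=\det\mathcal{H}_{\theta_p}$ is a real polynomial in the jet coordinates $(x,v,a)$. On $M$, consider the set
\[
\Sigma_{n,q}:=\{P\in M:\ f_i(P)=0\ \forall i,\ g_p(P)=0 \text{ for some admissible } p\}.
\]
This is a finite union over $p$. For each $p$ the defining equations are real-analytic, so by \cite[Proposition 14.2.12]{Treves2022} the corresponding set is a finite union of submanifolds. The key point is that every stratum has codimension at least $n+1$. The $f_i$ are independent on $M$, as in Proposition \ref{p1}, and they do not involve $a$. Meanwhile $g_p$, viewed as a polynomial in $a$, has leading term $\prod_{i}\omega(a_i,x_{i+1}-x_{i-1})$. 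The off-diagonal entries $l_i$ and the phases never multiply all $n$ diagonal entries, so this term is unchanged by the twist. Since $x_{i+1}\neq x_{i-1}$, this term is a nonzero polynomial on each fibre. Thus $g_p$ does not vanish identically on $\{f=0\}$, and $\{f=0,\,g_p=0\}$ is a proper analytic subset of the codimension-$n$ manifold $\{f=0\}$. Its strata therefore have codimension $\ge n+1$.

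The Multijet Transversality Theorem is applied to each stratum, after exhausting it by compact pieces as in the proof of Theorem \ref{T1}. Since $\dim\mathbb{S}^{(n)}=n$, transversality forces $j^2\gamma^n(\mathbb{S}^{(n)})$ to avoid the stratum. This gives a residual set $W_{n,q}$ of tables with no $n$-periodic orbit having a $q$-resonant elliptic point. The degenerate case $\theta=0$ is already excluded by $\mathcal{T}_3$ from Theorem \ref{Nondeg}. Setting
\[
\mathcal{T}_4:=\mathcal{T}_3\cap\bigcap_{n\ge3,\,q\ge2}W_{n,q},
\]
a countable intersection of residual sets, proves the statement. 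Orbits that revisit a point were already handled in Theorem \ref{T1}, so one may also intersect with $\mathcal{T}_1$ and restrict to orbits with distinct $t_i$, as in the Remark after Theorem \ref{Nondeg}.

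The main obstacle is the nondegeneracy claim: that $g_p$ does not vanish identically on $\{f=0\}$. I must also ensure the transversality theorem applies to a set defined by a complex determinant. Hermitian symmetry makes $g_p$ real, and the leading-term argument in $a$ is the step I would verify most carefully. In particular I need that the $a_i$ can be varied freely within $M$ while the $f_i$ stay fixed. This holds because $M$ is open in the full jet space and the $f_i$ involve only $x$ and $v$.
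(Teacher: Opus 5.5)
Your proposal is correct and takes the paper's route: the paper only says to rerun the proof of Theorem \ref{Nondeg} with the extra condition $\det\mathcal{H}_\theta=0$. You supply the details that remark leaves implicit. You fix the countably many resonant angles $\theta=2\pi p/q$ so that each gives a single real equation, and you use reality of the Hermitian determinant. The unchanged leading term $\prod_i\omega(a_i,x_{i+1}-x_{i-1})$ gives codimension at least $n+1$, and you intersect with $\mathcal{T}_3$ and $\mathcal{T}_1$. Your correction of the corner entry to $l_1e^{\pm i\theta}$ is also right.
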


\indent We remind now the next well-known fact on non-degenerate periodic orbits. 
\begin{proposition} \label{si sapeva gia}
Let $n \ge 3$ and $\gamma \in \mathcal{S}$. Then $T_\gamma$ admits a finite number of non-degenerate $n$-periodic orbits.
\end{proposition}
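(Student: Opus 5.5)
The plan is to combine compactness of the space of $n$-periodic configurations with the isolation of non-degenerate critical points of the action. First, I will show that for fixed $n\ge 3$ the $n$-periodic orbits of $T_\gamma$ stay uniformly away from the boundary of the phase space. Concretely, I want $\delta=\delta(n,\gamma)>0$ such that for every $n$-periodic orbit $\{(t_i,s_i)\}$ the consecutive parameters satisfy
\[
\mathrm{dist}(t_{i+1},t_i)\ge\delta \quad\text{and}\quad \mathrm{dist}(t_{i+1},t_i^*)\ge\delta .
\]
The reason is that, by Proposition~\ref{properties}, $\hat T_\gamma$ extends continuously to the closure of $\hat{\mathcal P}_\gamma$. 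The diagonal is fixed and the line $\{(\gamma(t),\gamma(t^*))\}$ is mapped to its flip. An $n$-periodic orbit with a link very close to one of these boundary components would therefore have all its links close to the boundary, since the extension is continuous on a compact set. On the other hand, the rotation number of a periodic symplectic billiard orbit forces the total advance $\sum_i (t_{i+1}-t_i)$ to be a positive multiple of $2\pi$. This is incompatible with all steps being near $0$. It is also incompatible with all steps being near the ``half-turn'' $t\mapsto t^*$ once one takes into account the orientation and the constraint $\gamma(t_i)<\gamma(t_{i+1})<\gamma(t_i^*)$. Hence the set $\mathcal K_n$ of $n$-tuples $(t_1,\dots,t_n)$ which are $n$-periodic configurations is contained in a compact subset of $\mathbb S^{(n)}_\gamma$.

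Second, I will use the variational characterization. The $n$-periodic orbits correspond to critical points of $L(t_1,\dots,t_n)=\sum_{i}\omega(\gamma(t_i),\gamma(t_{i+1}))$ (cyclic indices). As in the proof of Theorem~\ref{Nondeg}, the orbit is non-degenerate if and only if the Hessian $\mathcal H$ of this action is invertible, that is $\det\mathcal H\neq 0$. By the implicit function theorem, a non-degenerate critical point is isolated in $\mathcal K_n$. By the same reasoning, it is isolated among all $n$-periodic configurations, and the same holds modulo the cyclic shift, which has finite order $n$.

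Third comes the counting step. Suppose there were infinitely many distinct non-degenerate $n$-periodic orbits. By the compactness established in the first step, a subsequence of the corresponding configurations converges to some $\bar t\in\mathcal K_n$. This limit is still a critical point of $L$, because $\nabla L$ is continuous, and it is still a genuine $n$-periodic orbit, by the uniform separation $\delta$. If $\bar t$ is non-degenerate, its isolation contradicts the accumulation, and we are done.

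The main obstacle is the remaining case, in which $\bar t$ is a degenerate orbit accumulated by non-degenerate ones. There I would use that $\det\mathcal H$ is real-analytic in the jet variables and try to exploit the local structure of $\nabla L$ near $\bar t$ to rule out infinitely many isolated zeros clustering there. I expect this to be the delicate point of the argument. In the setting where the proposition is used later, namely tables in $\mathcal T_3$ given by Theorem~\ref{Nondeg}, every limit configuration is automatically non-degenerate. In that setting the contradiction above closes the argument immediately.
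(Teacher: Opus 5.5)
Your argument is essentially the paper's. Both keep $n$-periodic orbits in a compact part of the phase space (the paper phrases this as every $n$-periodic orbit having a point in $[0,2\pi)\times[\epsilon,\pi-\epsilon]$), and both use that non-degenerate periodic points are isolated.

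The case you leave open is not closed in the paper either. Its proof explicitly says ``since every $n$-periodic orbit is assumed to be non-degenerate''. So what is actually proved is finiteness under the hypothesis that \emph{all} $n$-periodic orbits are non-degenerate. That is exactly your fallback, and exactly what Corollary~\ref{finite and nondeg} needs for $\gamma\in\mathcal T_3$.

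Do not pursue the analyticity idea. $\det\mathcal H$ is polynomial in the jet variables, but $\nabla L$ and $\det\mathcal H\circ j^2\gamma^n$ are only $C^\infty$ in $(t_1,\dots,t_n)$. Non-degenerate critical points of a $C^\infty$ function can accumulate at a degenerate one (e.g.\ $e^{-1/x^2}\sin(1/x)$ at $0$). So the statement should carry that hypothesis rather than be claimed for arbitrary $\gamma\in\mathcal S$.

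Two small repairs are needed. First, for general $\gamma$ an orbit may revisit a point, and limits of configurations may acquire coincidences. So work in $\mathbb S^n$ with the link constraint rather than in $\mathbb S^{(n)}_\gamma$. Second, the near-$t^*$ case of your first step is cleanest in the tangent-angle parametrization of Proposition~\ref{tipo KAM}. There $t^*=t+\pi$ and each step lies in $(0,\pi)$. If all steps exceed $\pi-\pi/n$, the total advance lies in $((n-1)\pi,n\pi)$, which contains no multiple of $2\pi$.
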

\begin{proof}
Let $n \ge 3$. As in \cite[Proposition 6]{DCOKPC07}, we first notice that every $n$-periodic orbit must have a point lying in a compact subset of the phase-cylinder. In fact, the existence for every $\epsilon>0$ of a $n$-periodic orbit entirely contained in $[0,2\pi)\times(0,\epsilon)\cup[0,2\pi)\times(\pi-\epsilon,\pi)$ would contradict the $n$-periodicity of the orbit itself. Consequently, corresponding to $n$, there exists $\varepsilon > 0$ such that every $n$-periodic orbit has at least one point in $[0,2\pi)\times[\epsilon,\pi-\epsilon]$. Since every $n$-periodic orbit is assumed to be non-degenerate, each of its points is isolated and then there can be only a finite number of them in a compact cylinder. This proves that $T_\gamma$ has only a finite number of non-degenerate $n$-periodic orbits.
\end{proof}
\noindent Theorem \ref{Nondeg} and Proposition \ref{si sapeva gia} then gives the next corollary.
\begin{corollary}\label{finite and nondeg}
For every $\gamma\in \mathcal{T}_3$, the corresponding symplectic billiard map $T_\gamma$ admits only a finite number of periodic orbits for each period $n\geq3$ and they are all non-degenerate.
\end{corollary}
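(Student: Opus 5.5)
The corollary is the conjunction of Theorem \ref{Nondeg} with Proposition \ref{si sapeva gia}, so the plan is to check that the hypotheses of the proposition are met for every $\gamma\in\mathcal{T}_3$ and every period $n\ge 3$.

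First, fix $\gamma\in\mathcal{T}_3$ and $n\ge 3$. By Theorem \ref{Nondeg}, every periodic orbit of $T_\gamma$ is non-degenerate. In particular, every orbit of minimal period $n$ has the property that $DT^n_\gamma$ at any of its points does not have $1$ as an eigenvalue. Two details need attention here.

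\emph{Minimal versus non-minimal period.} An $n$-periodic point may have minimal period $d$ dividing $n$. I would state the corollary for minimal period $n$, as in the definition preceding Theorem \ref{Nondeg}. Alternatively, one can note that the points of period dividing $n$ form a finite union over divisors $d\ge 3$ of sets that are each finite by the same argument. Points of period $1$ or $2$ do not occur in the open phase space $\mathcal{P}_\gamma$. Period $2$ would force $t_3=t_1$, and hence $\omega(\gamma'(t_2),0)=0$ trivially; it is excluded because the pair would have to satisfy $t_2=t_1^*$, which lies on the boundary.

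\emph{Isolation.} Non-degeneracy means $DT^n_\gamma - \mathrm{Id}$ is invertible at the point. By the implicit function theorem applied to $T^n_\gamma(p)-p=0$ (using that $T_\gamma$ is $C^1$ by Proposition \ref{properties}), each $n$-periodic point is isolated in the set of $n$-periodic points.

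Second, I would apply Proposition \ref{si sapeva gia} directly. It states that every $n$-periodic orbit meets a fixed compact sub-cylinder $K_n\subset\mathcal{P}_\gamma$ bounded away from the boundary curves $\psi_1,\psi_2$. The set of $n$-periodic points in $K_n$ is closed, because $T^n_\gamma$ is continuous, and it consists of isolated points. A closed discrete subset of a compact set is finite, so only finitely many orbits meet $K_n$. Since every orbit of period $n$ meets $K_n$, the number of such orbits is finite. Combining this with the first step gives the corollary, with $\mathcal{T}_3$ as the residual set.

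The step I expect to be the main obstacle is the closedness of the set of $n$-periodic points inside $K_n$. A limit of periodic points could only escape to the boundary of $\mathcal{P}_\gamma$, where $T_\gamma$ merely extends continuously. This is why the compactness in Proposition \ref{si sapeva gia}, which keeps orbits away from the boundary, is essential. It is precisely what excludes accumulation of periodic points near the invariant boundary, where the fixed points $(\gamma(t),\gamma(t))$ lie.
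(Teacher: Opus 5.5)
Your route is exactly the paper's: non-degeneracy comes from Theorem~\ref{Nondeg}, and finiteness comes from Proposition~\ref{si sapeva gia} (every orbit of period $n$ meets a compact $K_n$, non-degenerate points are isolated, and a compact discrete set is finite). The step you flag as the main obstacle is not where the difficulty lies. $K_n$ is a compact subset of the open cylinder, on which $T^n_\gamma$ is continuous, so no limit escapes to the boundary.

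\textbf{The gap: minimal versus non-minimal period.} Suppose infinitely many points of minimal period $n$ lie in $K_n$. By compactness they accumulate at some $p\in K_n$ with $T^n_\gamma(p)=p$. If $p$ has minimal period $n$, non-degeneracy and the inverse function theorem give a contradiction. But $p$ may have minimal period $d$ with $d\mid n$ and $3\le d<n$, and membership in $\mathcal{T}_3$ only says that $DT^d_\gamma(p)$ has no eigenvalue $1$. Suppose $DT^d_\gamma(p)$ has an eigenvalue $\lambda\neq 1$ with $\lambda^{n/d}=1$: either $\lambda=-1$ with $n/d$ even, or $p$ is elliptic with rotation angle $2\pi j/k$ and $k\mid n/d$. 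Then $DT^n_\gamma(p)$ has eigenvalue $1$, so the inverse function theorem says nothing about isolation of $p$ in $\mathrm{Fix}(T^n_\gamma)$. Nothing then prevents points of minimal period $n$ from accumulating at $p$; for example, Poincar\'e--Birkhoff chains can accumulate near a resonant elliptic point with flat twist. Your claim that the $n$-periodic points in $K_n$ form a closed set of isolated points therefore fails under either reading. $\mathrm{Fix}(T^n_\gamma)\cap K_n$ is closed but may contain non-isolated points of lower period. The set of points of minimal period $n$ consists of isolated points but need not be closed. Your divisor decomposition does not help, because each piece is treated ``by the same argument''.

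The paper's one-line proof passes over the same point, so you have reproduced its argument together with its omission. To close it, you need $DT^n_\gamma$ to have no eigenvalue $1$ at every point of $\mathrm{Fix}(T^n_\gamma)$, including multiply traversed lower-period orbits. $\mathcal{T}_3$ alone does not give this. One way is to work in $\mathcal{T}_3\cap\mathcal{T}_4$ (Theorem~\ref{nonresonant}), intersected with the residual set on which no periodic point has eigenvalue $-1$. That last set comes from the same transversality argument applied to the anti-periodic Jacobi system, i.e.\ $\mathcal{H}_\theta$ with $\theta=\pi$, whose determinant still has leading term $\prod_i B_i$ in the $a$-variables. On this smaller residual set every point of $\mathrm{Fix}(T^n_\gamma)\cap K_n$ is isolated. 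This compact set is then finite, which gives the corollary for both minimal and non-minimal period $n$.
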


\indent For the same $n$-period orbit $\mathcal{O} = \{ (t_1,s_1), (t_2,s_2), \ldots , (t_n,s_n) \}$, assume now that the point $t_1$ is self-conjugate, i.e. $t_1$ and $t_{n+1}$ (with $\gamma(t_{n+1}) = \gamma(t_1)$) are conjugate. By Definition \ref{JACOBI}-- this means that there exists a non-zero $n$-periodic Jacobi field $\{\xi_i\}_{i \in \mathbb{Z}}$ along $\{t_i\}_{i \in \mathbb{Z}}$ vanishing at $t_1$ and $t_{n+1}$. By \eqref{jacobi} and by the periodicity condition, we then have that $\{\xi_i\}_{i \in \mathbb{Z}}$ must satisfy the following linear system:
\begin{equation*}
\underbrace{\begin{pmatrix}
    B_2 & l_3 & 0 & \cdots & 0 \\
    l_3 & B_3 & l_4 & 0 & \cdots \\
    \vdots&\vdots&\vdots&\vdots&\vdots \\
    0 & \cdots & 0 & l_n & B_n
\end{pmatrix}}_{:=\mathcal{K}}
\begin{pmatrix}
    \zeta_2 \\
    \zeta_3 \\
    \vdots \\
    \zeta_{n}
\end{pmatrix}
=
\begin{pmatrix}
    0 \\
    0 \\
    \vdots \\
    0
\end{pmatrix}
\end{equation*}
for some $(\xi_2,...,\xi_n)\neq 0$. In other words, the configuration $t_1$ in $\mathcal{O} = \{(t_1,s_1),...,(t_n,s_n)\}$ is self-conjugate if and only if 
\begin{equation*} 
\det \mathcal{K} = 0\, .
\end{equation*}
With the same arguments as for the proof of the previous Theorem \ref{Nondeg} (only substituting the condition $\det\mathcal{H} = 0$ with $\det\mathcal{K} = 0$), we can then prove the following result.
\begin{theorem} \label{T4}
There exists a residual set $\mathcal{T}_5 \subset \mathcal{S}$ such that every periodic orbit in $\gamma \in \mathcal{T}_5$ has at least one non self-conjugate point.
  \end{theorem}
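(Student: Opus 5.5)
The plan is to copy the proof of Theorem \ref{Nondeg}, replacing the degeneracy condition $\det\mathcal{H}=0$ with a self-conjugacy condition. Fix $n\ge 3$. By Theorem \ref{T1} I may restrict to orbits that visit each reflection point once per period, so the configuration $(t_1,\dots,t_n)$ lies in $\mathbb{S}^{(n)}_\gamma$. Every point of an $n$-periodic orbit is self-conjugate exactly when all $n$ cyclic versions of $\mathcal{K}$ are singular. Write $\mathcal{K}_k$ for the tridiagonal $(n-1)\times(n-1)$ matrix obtained from $\mathcal{H}$ by deleting the $k$-th row and column; with $k=1$ this is $\mathcal{K}$. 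On the open manifold $M$ of $2$-jets, consider
$$f_i(t,x,v,a)=\omega(v_i,x_{i+1}-x_{i-1}),\quad i=1,\dots,n, \qquad g_k(t,x,v,a)=\det\mathcal{K}_k(t,x,v,a),\quad k=1,\dots,n .$$
Let $\Sigma_n$ be their common zero set. If $\gamma$ has an $n$-periodic orbit all of whose points are self-conjugate, then $j^2\gamma^n(t_1,\dots,t_n)\in\Sigma_n$.

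The key step is to show that $\Sigma_n$ is a finite union of submanifolds of codimension at least $n+1$. One extra equation suffices, since $\dim \mathbb{S}^{(n)}=n$, so in fact the condition $\det\mathcal{K}=0$ alone would do. The $f_i$ do not depend on $a$, and they are independent, by the argument of Proposition \ref{p1} using the vectors in the $v_i$-planes. The function $g_1=\det\mathcal{K}$ is polynomial in $a$ with leading term $\prod_{i=2}^n\omega(a_i,x_{i+1}-x_{i-1})$, and this term never vanishes identically because $x_{i+1}\neq x_{i-1}$ for $n\ge 3$. So $g_1$ is a nontrivial real-analytic function in the $a$-directions, transverse to the fibres where $f=0$. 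As in Theorem \ref{Nondeg}, \cite[Proposition 14.2.12]{Treves2022} gives that $\{f=0,\ g_1=0\}$ is a finite union of analytic submanifolds $\Sigma^j$ of codimension $j\ge n+1$. To be more careful, I would note that $\partial_{a_2}g_1$ in the direction $J(x_3-x_1)$ equals $\det$ of the minor of $\mathcal{K}$ with the first row and column removed. I would then argue inductively, since the nested tridiagonal minors cannot all vanish simultaneously given $l_i\neq 0$ from strong convexity. This shows that the singular strata still have codimension greater than $n$.

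Finally I apply the Multijet Transversality Theorem to each stratum $\Sigma^j$, after exhausting it by countably many compact pieces, exactly as in the proof of Theorem \ref{T1}. For the map $(t_1,\dots,t_n)\mapsto j^2\gamma^n(t_1,\dots,t_n)$, whose domain has dimension $n$, transversality to a submanifold of codimension $>n$ means the image misses it. This gives a residual set $V_n$ for which no $n$-periodic orbit has $t_1$ self-conjugate. Applying the argument with each index $k$ in place of $1$, or equivalently relabelling cyclically, gives the conclusion at every point, which is stronger than what is needed. Then $\mathcal{T}_5=\bigcap_{n\ge3}V_n$, intersected with $\mathcal{T}_1$ to justify the reduction to distinct points, is residual.

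The main obstacle is the stratification claim: that adding $\det\mathcal{K}=0$ really raises the codimension everywhere on $\Sigma_n$, including where $\nabla_a\det\mathcal{K}$ vanishes. I would handle it through the analytic-set structure, using the nonvanishing leading monomial. If needed, I would use the explicit inductive argument on the leading principal minors of the tridiagonal matrix, where the $l_i$ are nonzero. This argument shows that some partial derivative of some order with respect to the $B_i$ is nonzero.
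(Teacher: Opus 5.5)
Your proposal is correct and follows essentially the same route as the paper, whose proof of Theorem~\ref{T4} simply reruns the argument of Theorem~\ref{Nondeg} with $\det\mathcal{H}=0$ replaced by $\det\mathcal{K}=0$, using the same ingredients you list: independence of the $f_i$, the nonvanishing leading monomial $\prod_{i=2}^n\omega(a_i,x_{i+1}-x_{i-1})$ in the $a$-variables, analytic stratification into pieces of codimension $>n$, and multijet transversality. Your optional minor argument can be sharpened to bypass the stratification altogether: write $D_j$ for the determinant of the principal block of $\mathcal{K}$ on indices $j,\dots,n$; then the recurrence $D_j=B_jD_{j+1}-l_{j+1}^2D_{j+2}$ ($j\le n-1$, $D_{n+1}:=1$) together with $l_i\neq0$ shows that $\det\mathcal{K}=D_2$ and $D_3$ never vanish simultaneously, so $\partial_{a_2}\det\mathcal{K}=D_3\,\omega(\cdot,x_3-x_1)\neq0$ on $\{\det\mathcal{K}=0\}$ and $\{f=0,\ \det\mathcal{K}=0\}$ is a genuine submanifold of codimension exactly $n+1$.
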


\section{Franks' lemma for generic symplectic billiards} \label{section 5}

\noindent This section contains the proof of Franks' lemma for symplectic billiards, saying that --for a residual set of strongly convex billiard tables-- an arbitrary perturbation of the derivative of the symplectic billiard map along a periodic orbit of period $\ge 3$ can be realized through a small perturbation of the strongly convex symplectic billiard table in the $C^{2}$ topology. The proof follows the same lines of the corresponding result for Birkhoff billiards, given in \cite[Theorem 1]{Viss}. \\
~\newline
\noindent Let $\gamma \in \mathcal{S}$. In what follows, given a $n$-periodic orbit $\mathcal{O} = \{(t_1,s_1), (t_2,s_2), \ldots, (t_n,s_n)\}$ for $T_\gamma$, $DT_\gamma(\mathcal{O})$ denotes the derivative of $T_\gamma$ along $\mathcal{O}$, that is:
$$DT_\gamma(\mathcal{O}) = DT_\gamma(t_n,t_1) \ldots DT_\gamma(t_2,t_3) DT_\gamma(t_1,t_2)\, ,$$
where the Jacobian matrix $DT_\gamma(t_i,t_{i+1})$ is given by formula (\ref{differential}). To simplify, we use the symbol $\prod_{j=1}^n DT_\gamma(t_j,t_{j+1})$ to indicate the above formula, aware that this product is clearly non-commutative. Moreover, see statement of Theorem \ref{T1}, recall that $\mathcal{T}_1 \subset \mathcal{S}$ denotes the residual set such that, for every $\gamma \in \mathcal{T}_1$, every periodic orbit in $\gamma$ is in ``general position'' (that is, no periodic orbit in $\gamma$ passes --within one period-- through the same point more than once).

\begin{theorem}\label{Franks}
For every $\gamma \in \mathcal{T}_1$, for every periodic orbit $\mathcal{O}$ for $T_\gamma$ of period at least $3$ and for every $C^2$ neighborhood $U$ of $\gamma$, there exists an open ball $B \subset Sp(1)$ centered in $DT_\gamma(\mathcal{O})$ such that any element of $B$ is realizable as $DT_{\tilde{\gamma}}(\mathcal{O})$ for some $\tilde{\gamma} \in U$. Moreover, such a perturbation can be supported in an arbitrary small neighborhood of three successive bouncing points of $\mathcal{O}$. 
\end{theorem}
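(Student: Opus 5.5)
We plan to follow the strategy of \cite[Theorem 1]{Viss}: keep the orbit $\mathcal{O}$ fixed as a set of bouncing points and perturb only the second-order data of $\gamma$ near three consecutive points $\gamma(t_{k-1}),\gamma(t_k),\gamma(t_{k+1})$. Since $\gamma\in\mathcal{T}_1$, these three points are distinct and each is visited only once per period. A perturbation supported in small disjoint neighbourhoods of them therefore affects only the factors of $DT_\gamma(\mathcal{O})$ that involve these parameters.

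Concretely, we take perturbations $\tilde\gamma=\gamma+\sum_{j\in\{k-1,k,k+1\}}\varepsilon_j\,\phi_j(t)\,\gamma'(t_j)^{\perp}$. Here $\phi_j$ is a bump function centred at $t_j$ with $\phi_j(t_j)=\phi_j'(t_j)=0$ and $\phi_j''(t_j)=1$, and the normal vector is chosen so that $\omega(\gamma'(t_j)^\perp,\gamma(t_{j+1})-\gamma(t_{j-1}))\neq0$. Then $\tilde\gamma(t_j)=\gamma(t_j)$ and $\tilde\gamma'(t_j)=\gamma'(t_j)$. By \eqref{con2} the sequence $\{t_i\}$ is still an orbit of $T_{\tilde\gamma}$: the reflection law only involves positions and tangents at the bouncing points, so $\mathcal{O}$ persists. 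Similarly all $l_i=L_{12}(t_{i-1},t_i)=\omega(\gamma'(t_{i-1}),\gamma'(t_i))$ are unchanged. The only quantities that change are the second derivatives $L_{11}(t_j,t_{j+1})$ and $L_{22}(t_{j-1},t_j)$, via $\gamma''(t_j)$. In fact only their combination $B_j=L_{22}(t_{j-1},t_j)+L_{11}(t_j,t_{j+1})$ changes, and it does so affinely in $\varepsilon_j$ with nonzero slope $\omega(\gamma'(t_j)^\perp,\gamma(t_{j+1})-\gamma(t_{j-1}))$. The $C^2$ size of the perturbation is $O(|\varepsilon|)$, so for small $\varepsilon$ we have $\tilde\gamma\in U$, and strong convexity is preserved.

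It is convenient to pass to the Jacobi-field (second-order recurrence) form of \eqref{jacobi}. There the one-step transfer matrix is $\xi_{i+1}=-(B_i\xi_i+l_i\xi_{i-1})/l_{i+1}$. The monodromy $DT_\gamma(\mathcal{O})$ is conjugate, by a fixed matrix depending only on the unchanged $l$'s, to $\prod_i A_i$, where
\[
A_i=\begin{pmatrix}-B_i/l_{i+1} & -l_i/l_{i+1}\\ 1&0\end{pmatrix}.
\]
Hence it suffices to show that the map $(B_{k-1},B_k,B_{k+1})\mapsto A_{k+1}A_kA_{k-1}$ is a submersion onto a neighbourhood in the three-dimensional manifold of matrices with fixed determinant $l_{k-1}/l_{k+2}$. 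Once this is known, left and right multiplication by the fixed remaining factors of $\prod_i A_i$ is a diffeomorphism, and we obtain an open ball around $DT_\gamma(\mathcal{O})$ in $Sp(1)$ after conjugating back. If the period is exactly $3$, the same argument applies, with all factors perturbed.

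The main obstacle is verifying this submersion property, that is, that the $3\times3$ Jacobian of $(B_{k-1},B_k,B_{k+1})\mapsto A_{k+1}A_kA_{k-1}$ is nonsingular at the given point. Each $A_i$ equals a fixed matrix plus $B_i$ times the rank-one matrix $E=\begin{pmatrix}-1/l_{i+1}&0\\0&0\end{pmatrix}$. The three partial derivatives are therefore $A_{k+1}A_kE'$, $A_{k+1}E'A_{k-1}$ and $E'A_kA_{k-1}$, with the appropriate scalings $E'$. We would first compute their linear independence directly, modulo the determinant constraint. This is a polynomial identity in the $B$'s and $l$'s, and we expect it to reduce to $l_i\neq0$, which holds by strong convexity since $L_{12}>0$. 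This is the point where, unlike the Birkhoff case, no no-focusing condition should be needed: the $B_j$ enter the transfer matrices linearly and independently. If some degenerate configuration of $B$'s did make the Jacobian vanish, we would first apply a preliminary small perturbation of the $B_j$ to move off that proper algebraic subset, and then run the argument from there, which still yields realizations inside $U$.
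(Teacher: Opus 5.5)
Your route is the paper's. You keep $\mathcal{O}$ fixed and change only the normal part of $\gamma''$ at three consecutive bounce points; your bump functions play the role of Lemma~\ref{lemma Viss}. This moves only the diagonal Jacobi coefficients $B_j=\omega(\gamma''(t_j),\gamma(t_{j+1})-\gamma(t_{j-1}))$, which are the paper's $A_{j-1}=k_jC_{j-1}$, and you then apply the inverse function theorem to the three-factor product. Your transfer matrices are just another normalization of the paper's factors $\begin{pmatrix}A_j&1\\B_j&0\end{pmatrix}$.

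The gap is that the step you call the main obstacle is the heart of the proof, and you leave it undone with a prediction that is not right. Write $A_i=\begin{pmatrix}b_i&c_i\\1&0\end{pmatrix}$ with $b_i=-B_i/l_{i+1}$, $c_i=-l_i/l_{i+1}$, and let $E=\begin{pmatrix}1&0\\0&0\end{pmatrix}$. The three derivatives $A_{k+1}A_kE$, $A_{k+1}EA_{k-1}$, $EA_kA_{k-1}$ are
\[
\begin{pmatrix}b_{k+1}b_k+c_{k+1}&0\\ b_k&0\end{pmatrix},\qquad
\begin{pmatrix}b_{k+1}b_{k-1}&b_{k+1}c_{k-1}\\ b_{k-1}&c_{k-1}\end{pmatrix},\qquad
\begin{pmatrix}b_kb_{k-1}+c_k&b_kc_{k-1}\\ 0&0\end{pmatrix}.
\]
Their $3\times3$ minor on the entries $(1,2),(2,1),(2,2)$ equals $b_k^2c_{k-1}^2$. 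If $b_k=0$, the first and third matrices are proportional. So the condition is not just $l_i\neq0$: you also need $B_k\neq0$ at the middle point, which is exactly the paper's condition $A_2B_3\neq0$.

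The missing observation is that this always holds. By the reflection law, $\gamma(t_{k+1})-\gamma(t_{k-1})=\lambda\gamma'(t_k)$ with $\lambda\neq0$. Hence $B_k=-\lambda\,\omega(\gamma'(t_k),\gamma''(t_k))\neq0$ by strong convexity.

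Your fallback could not have replaced this computation. A preliminary perturbation yields a ball centred at the perturbed monodromy, not at $DT_\gamma(\mathcal{O})$ as the statement requires. At a critical point, the image of a neighbourhood need not contain any ball around the original value (think of $x\mapsto x^2$ at $0$). Once $B_k\neq0$ is noted, the fallback is unnecessary.

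There is also a smaller inaccuracy. From \eqref{differential} one gets $\eta_i=l_i\xi_{i-1}+L_{22}(t_{i-1},t_i)\xi_i$. So the matrix conjugating $DT_\gamma(\mathcal{O})$ to your product contains $\gamma''$ at the base point; it is not determined by the $l$'s alone. It stays fixed only if the base point is not among the three perturbed points. For $n\ge4$ you can choose the points that way. For $n=3$ the base point is necessarily perturbed, and the derivative in that parameter picks up an extra commutator term. So ``the same argument applies'' needs an actual check there. The paper's conjugator $\begin{pmatrix}1&0\\L_{22}(t_n,t_1)&1\end{pmatrix}$ has the same dependence.
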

\noindent In order to prove this theorem, we need to premise the next technical result (we refer to \cite[Lemma 7]{Viss} for the proof).
\begin{lemma} \label{lemma Viss}
For $\gamma \in \mathcal{S}$, let $\gamma(t) \in \Gamma$ with curvature $k(t)$. There exists $\varepsilon_\gamma > 0$ such that for any $0 < \varepsilon < \varepsilon_\gamma$, for any $|k - k(t)| < \varepsilon$ and for any neighbourhood $V$ of $\gamma(t)$ in $\Gamma$, there exists a perturbation $\tilde{\gamma}$ of $\gamma$ with the following properties:
\begin{itemize}
\item[1.] $k_{\tilde{\gamma}}(t) = k$,
\item[2.] $\tilde{\gamma} = \gamma$ outside $V$,
\item[3.] $\tilde{\gamma} \in \mathcal{S}$,
\item[4.] $\| \tilde{\gamma} - \gamma\|_{C^2} < \varepsilon$.
\end{itemize} 
\end{lemma}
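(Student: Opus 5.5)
The plan is to prove the lemma by a local graph perturbation near $\gamma(t)$. I would add a compactly supported bump whose second derivative at the base point is prescribed, and then check properties 1--4.

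\emph{Local representation.} By Proposition \ref{properties} (4), rigid motions commute with the dynamics, so I may place $\gamma(t)$ at the origin with tangent line equal to the horizontal axis. Near $\gamma(t)$ the curve $\Gamma$ is then a graph $y=f(x)$, $|x|<r$, with $f(0)=f'(0)=0$ and $f''(0)=k(t)>0$. The curvature of a graph is $f''/(1+f'^2)^{3/2}$, so at $x=0$ it equals $f''(0)$.

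\emph{The perturbation.} Fix a $C^\infty$ function $h$ with $h(x)=x^2/2$ near $0$, support in $[-1,1]$, and $\sup|h''|\le 1+\eta$, $\sup|h'|,\sup|h|\le 1$, for a small $\eta>0$. Such an $h$ exists: take $h''=1$ on a short interval around $0$, then let $h''$ be negative with small absolute value on a long interval so that $h'$ returns to $0$ while $h$ comes back down, then smooth, and finally rescale $x$ so the support fits in $[-1,1]$. For $\delta\in(0,r)$ set
\[
\tilde f(x)=f(x)+(k-k(t))\,\delta^2 h(x/\delta).
\]
Then $\tilde f'(0)=0$ and $\tilde f''(0)=k$, so the curvature at the base point is exactly $k$, which is property 1. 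The perturbation is supported in $|x|\le\delta$, so choosing $\delta$ small makes $\tilde\gamma=\gamma$ outside $V$, which is property 2. Its $C^0$, $C^1$ and $C^2$ norms are bounded by $|k-k(t)|\delta^2$, $|k-k(t)|\delta$ and $(1+\eta)|k-k(t)|$ respectively.

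\emph{Convexity and the $C^2$ estimate.} Choose $\varepsilon_\gamma$ smaller than a fixed fraction of $\min k_\gamma$. Since $f'$ is small near $0$, the perturbed curvature $\tilde f''/(1+\tilde f'^2)^{3/2}$ differs from that of $\gamma$ by at most roughly $(1+\eta)\varepsilon+O(\delta)$. This stays positive, so $\tilde\gamma\in\mathcal S$ (property 3). Next I pass from the graph to a parametrization $\tilde\gamma:\mathbb S\to\tilde\Gamma$, using $\tilde\gamma(\tau)=\gamma(\tau)+(0,\tilde f-f)$ in the chart, which coincides with $\gamma$ off $V$. Its $C^2$ distance to $\gamma$ is controlled by the graph perturbation times constants depending only on $\gamma$ near $t$; these constants tend to $1$ as $\delta\to0$ because $f'\to0$.

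\emph{Main obstacle.} The delicate step is getting the sharp bound $\|\tilde\gamma-\gamma\|_{C^2}<\varepsilon$ rather than $C\varepsilon$. The second derivative of the perturbation is at least $|k-k(t)|$ at the base point, so there is no room for a large constant. I would handle this with the nearly optimal profile $h$ ($\sup|h''|\le1+\eta$), by taking $\delta\to0$ so the chart distortion and the $C^0,C^1$ contributions vanish, and by exploiting the strict inequality $|k-k(t)|<\varepsilon$ to absorb the factor $1+\eta$. If the perimeter normalization $2\pi$ is required, I would apply a homothety (allowed by Proposition \ref{properties} (4)). Its factor is $1+O(\delta^2)$ and does not affect curvature at the point up to an arbitrarily small error, which I would compensate by adjusting $k$ slightly before applying the construction.
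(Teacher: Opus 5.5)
The paper does not prove this lemma itself; it defers to \cite[Lemma 7]{Viss}. Your local construction is a correct self-contained proof: write $\Gamma$ near $\gamma(t)$ as a graph over its tangent line and add $(k-k(t))\delta^2h(x/\delta)$, with $h(x)=x^2/2$ near $0$. You also put the care exactly where it is needed. Property 4 leaves no room for a constant, and the profile with $\sup|h''|\le 1+\eta$, the limit $\delta\to0$ and the strict inequality $|k-k(t)|<\varepsilon$ settle it.

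Three points should be tightened.

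(i) The chart constants tend to $1$ because $|\gamma'(t)|=1$, not merely because $f'\to0$. In general the second $\tau$-derivative of your perturbation at $t$ has size $|k-k(t)|\,|\gamma'(t)|^2$. A perturbation supported in a short parameter interval has negligible first derivative, so nothing else can compensate. The sharp bound therefore rests on the arc-length convention, which the paper tacitly uses anyway: it writes $\gamma''=kJ\gamma'$ in the proof of Theorem~\ref{Franks}.

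(ii) The homothety you propose for restoring perimeter $2\pi$ moves every point of $\Gamma$, so it violates property 2. If the normalization must be kept, compensate the $O(\varepsilon\delta^3)$ length defect inside $V$ instead, e.g.\ by wide, low bumps on each side of $\gamma(t)$, whose $C^2$ size is negligible. Arrange them so that the arc length up to $\gamma(t)$ is also preserved, so that property 1 survives the arc-length reparametrization. Note, however, that the paper does not preserve perimeter in its own perturbations either, e.g.\ in the proof of Theorem~\ref{marzo}.

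(iii) Your description of $h$ needs a third phase. Suppose $h''\le 0$ on the rest of the support after the initial phase on $[0,a]$. Then $h'$ is nonincreasing and ends at $0$, so $h'\ge 0$ there and $h$ stays above $h(a)>0$. Hence $h''$ must turn positive again at the end. A smoothing of the profile $h''=1,-1,1$ on $[0,a]$, $[a,3a]$, $[3a,4a]$, extended evenly, does the job.
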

\begin{proof}[Proof of Theorem~\ref{Franks}]
Let $\{(t_1,s_1), (t_2,s_2), \ldots, (t_n,s_n)\}$ be the coordinates of a $n$-periodic orbit $\mathcal{O}$ of $T_{\gamma}$, with $n \ge 3$. Correspondingly, for $j=1,\ldots,n$, let
\begin{equation} \label{jacobian}
A(t_j,t_{j+1}) :=
\begin{pmatrix}
 L_{11}(t_j,t_{j+1}) & 1\\
- L_{12}^2(t_j,t_{j+1})+ L_{22}(t_{j},t_{j+1})\cdot L_{11}(t_j,t_{j+1})&  L_{22}(t_j,t_{j+1})
    \end{pmatrix}\, .
\end{equation}
Notice --since $L$ and its derivates are $2\pi$-periodic and $\mathcal{O}$ is $n$-periodic-- that $A(t_n,t_{n+1}) = A(t_n,t_1)$. Then, by using formula (\ref{differential}), we obtain:
\begin{equation} \label{eccola}
\begin{split} 
DT_\gamma(\mathcal{O}) &= \prod_{j=1}^n DT_\gamma(t_j,t_{j+1}) = (-1)^n \prod_{j=1}^n \frac{A(t_j,t_{j+1})}{L_{12}(t_j,t_{j+1})} \, .
\end{split}
\end{equation}
In order to perform the perturbation, it is convenient to write $A(t_j,t_{j+1})$ as the product of the next two matrices: 
\begin{equation*}
A(t_j,t_{j+1}) :=
\begin{pmatrix}
 1 & 0\\
L_{22}(t_{j},t_{j+1}) &  1
    \end{pmatrix} 
\begin{pmatrix}
L_{11}(t_{j},t_{j+1}) & 1\\
-L_{12}^2(t_{j},t_{j+1}) &  0
    \end{pmatrix}\, , 
\end{equation*}
for $j=1,\ldots,n$. Notice that the first matrix contains the curvature $k(t_{j+1})$ in the term $L_{22}(t_{j},t_{j+1}) = \omega(\gamma(t_j),\gamma''(t_{j+1}))$ and the second one contains the curvature $k(t_{j})$ in the term $L_{11}(t_{j},t_{j+1}) = \omega(\gamma''(t_j),\gamma(t_{j+1}))$. Moreover, by a direct computation, we obtain that
$$\prod_{j=1}^n A(t_j,t_{j+1}) = 
\begin{pmatrix}
1 & 0\\
L_{22}(t_{n},t_{1}) &  1
    \end{pmatrix} 
\left[ \prod_{j=1}^n \begin{pmatrix}
A_j & 1 \\
B_j &  0
\end{pmatrix}
\right]
\begin{pmatrix}
1 & 0\\
L_{22}(t_{n},t_{1}) &  1
\end{pmatrix}^{-1}\, ,
$$
where:
$$\begin{cases}
A_{j-1} := L_{11}(t_j,t_{j+1}) + L_{22}(t_{j-1},t_j) = \omega(\gamma''(t_j),\gamma(t_{j+1}) - \gamma(t_{j-1}))\\
B_{j-1} := -L_{12}^2(t_j,t_{j+1}) = - [\omega(\gamma'(t_j),\gamma'(t_{j+1}))]^2
\end{cases}
$$
for $j = 2, \ldots,n+1$. In particular, since $\gamma''(t) = k(t)J\gamma'(t)$, we have that
$$A_{j-1} = k_jC_{j-1}\, ,$$ 
where:
$$k_j := k(t_j) \quad \text{and} \quad C_{j-1} := \omega(J\gamma'(t_j),\gamma(t_{j+1}) -\gamma(t_{j-1}))$$
for $j = 2, \ldots,n+1$. Notice that every $C_{j-1} \ne 0$ since $\gamma(t_{j+1}) -\gamma(t_{j-1})$ is parallel to $\gamma'(t_j)$ for the symplectic billiard law. \\
Without loss of generality, suppose now to perturb the curvature at the points $\gamma(t_1)$, $\gamma(t_2)$ and $\gamma(t_3)$ in $\Gamma$. We notice that here we use the hypothesis that $\gamma \in \mathcal{T}_1$, otherwise the perturbation we are going to perform may involve other points along the periodic orbit (to be precise, it would be sufficient that no periodic orbit in $\gamma$ passes --within one period-- through these three points more than once). We then define:
\begin{equation*}
\label{DTo}
M := \begin{pmatrix}
A_3 & 1\\
B_3 &  0
\end{pmatrix}
\begin{pmatrix}
A_2 & 1\\
B_2 &  0
\end{pmatrix}
 \begin{pmatrix}
A_1 & 1\\
B_1 &  0
\end{pmatrix}\, .
\end{equation*}
In order to make sure that these perturbations move $DT_\gamma(\mathcal{O})$ in 3 distinct directions in $Sp(1)$, it is sufficient to check that the matrices:
$$\frac{\partial}{\partial k_1}M, \quad \frac{\partial}{\partial k_2}M, \quad \frac{\partial}{\partial k_3}M$$
are linearly independent. It holds:
$$\frac{\partial}{\partial k_1}M = C_3 \begin{pmatrix}
A_1 A_2 + B_1 & A_2\\
0 &  0
\end{pmatrix}, \qquad 
\frac{\partial}{\partial k_2}M = C_1 \begin{pmatrix}
A_2 A_3 + B_2 & 0 \\
A_2B_3 &  0
\end{pmatrix}, \qquad 
\frac{\partial}{\partial k_3}M = C_2 \begin{pmatrix}
A_1 A_3 & A_3 \\
A_1B_3 &  B_3
\end{pmatrix}\, .
$$
In particular, the condition of linear independence is satisfied if the matrix
$$\begin{pmatrix}
A_1 A_2 + B_1 & A_2 & 0 & 0 \\
A_2A_3 + B_2 & 0 & A_2B_3 & 0 \\
A_1 A_3 &  A_3 & A_1B_3 & B_3
\end{pmatrix}
$$
has maximal rank ($=3$). This is the case if $A_2 B_3 \ne 0$, which is surely true: $A_2 \ne 0$ as a consequence of the symplectic billiard law, and $B_3 \ne 0$ for the twist condition. \\
Let $\mathcal{K} = \{ k = (k_1 , k_2 , k_3 )\} \cong \mathbb{R}^3$ be the space of curvatures at the points $\gamma(t_1), \gamma(t_2)$ and $\gamma(t_3)$ respectively, and let $\Phi: \mathcal{K} \to Sp(1)$ be the map that assigns to each $(k_1,k_2,k_3)$ the product $DT_\gamma(\mathcal{O})$ defined in (\ref{eccola}). By the arguments above and the inverse function theorem, $\Phi$ is a local diffeomorphism and the image of a neighbourhood of $k$ under $\Phi$ contains an open ball $B \subset Sp(1)$ centered in $DT_\gamma(\mathcal{O})$. In order to conclude the proof, it is sufficient to apply Lemma \ref{lemma Viss} to points $\gamma(t_1)$, $\gamma(t_2)$ and $\gamma(t_3)$ of the $n$-periodic orbit $\mathcal{O}$ of $T_\gamma$.
\end{proof} 
\begin{remark}
It is worth noting that the proof of Franks' lemma for Birkhoff billiards requires that the initial table has the property that there is no focusing along $3$-length segments of periodic orbits, we refer to Condition (3) in \cite[Definition 4]{Viss}. The author then defines a subset of convex planar billiards that has this property, and shows that it is a residual set. It is interesting to note that, in the symplectic billiard setting, such a condition is not required. 
\end{remark}
\indent As an application of Franks' lemma, in the sequel we show that symplectic billiards with stably hyperbolic periodic points have a hyperbolic set as the closure of the hyperbolic periodic points. In order to state this result precisely, we need to introduce some definitions. 
\begin{definition} [Hyperbolic set]
Let $\gamma\in \mathcal{S}$. A compact invariant subset $\Lambda$ of $\mathcal{P}_\gamma$ is hyperbolic of $T_\gamma$ if there exist a $DT_\gamma$-invariant continuous splitting of the tangent bundle $T_\Lambda \mathcal{P}_\gamma = E_\Lambda^s\oplus E_\Lambda^u$ and $m\in\mathbb{N}$ such that
$$
\|DT_\gamma^m(x)|_{E^s_x}\|\leq\frac12\quad\text{and}\quad \|DT_\gamma^{-m}(x)|_{E^u_x}\|\leq \frac12 \qquad \forall x\in\Lambda\, .
$$
\end{definition} 
\noindent Fix $p \geq 3$ and let
$$
\mathcal{H}_{\ge p} := \{\gamma\in\mathcal{S}\colon \text{all periodic points of $T_\gamma$ of period $\geq p$ are hyperbolic}\}.
$$
For $\gamma \in \mathcal{H}_{\ge p}$, let denote by $\mathrm{Per}_{\mathcal{H}}(T_\gamma)$ the set of hyperbolic periodic points of $T_\gamma$. Using Franks' lemma for symplectic billiards, we get the following result. 

\begin{theorem}\label{stably hyperbolicity}
$\overline{\mathrm{Per}_{\mathcal{H}}(T_\gamma)}$ is a hyperbolic set of $T_\gamma$ for every $\gamma\in \mathrm{int}_{C^2}(\mathcal{H}_{\ge p})$.
\end{theorem}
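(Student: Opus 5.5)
This is the classical Mañé-type argument (as in Newhouse, and Visscher for Birkhoff billiards), with the Franks' lemma of Theorem~\ref{Franks} playing the role of the perturbation tool.

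\textbf{Step 1: uniform bounds for periodic orbits.} Fix $\gamma\in\mathrm{int}_{C^2}(\mathcal{H}_{\ge p})$ and a $C^2$ neighbourhood $U\subset\mathcal{H}_{\ge p}$ of $\gamma$. I first show there exist $\varepsilon>0$ and $m\in\mathbb{N}$ such that every periodic orbit $\mathcal{O}$ of $T_\gamma$ of period $n\ge p$ satisfies:
\begin{enumerate}
\item[(a)] the eigenvalues $\lambda^{\pm1}$ of $DT_\gamma(\mathcal{O})$ satisfy $|\lambda|\ge(1+\varepsilon)^n$;
\item[(b)] the angle between the stable and unstable directions at each point of $\mathcal{O}$ is bounded below by a constant independent of $\mathcal{O}$.
\end{enumerate}
Both are proved by contradiction.

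\textbf{Proof of (a).} Suppose a sequence of orbits has $|\lambda|^{1/n}\to1$. Since $DT_\gamma$ is uniformly bounded on the relevant compact region (Proposition~\ref{si sapeva gia} confines orbits of fixed period, and I would also need a uniform bound along all periodic orbits), I write $DT_\gamma(\mathcal{O})$ as a product of $n$ factors. I then compose each factor with a rotation-like element of $Sp(1)$ that is $\delta$-close to the identity. The product can thereby be made to have eigenvalue $1$ or to become elliptic, by the standard argument: small angle changes spread over $n$ factors accumulate, since the hyperbolicity rate is subexponential.

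The difficulty is that Theorem~\ref{Franks} only realizes perturbations in a ball $B$ around $DT_\gamma(\mathcal{O})$ by changing curvature near three consecutive points, not a perturbation at each point. I would therefore apply the Franks' lemma repeatedly along disjoint triples $\{t_{3i+1},t_{3i+2},t_{3i+3}\}$. These give disjoint supports because $\gamma\in\mathcal{T}_1$; if $\gamma\notin\mathcal{T}_1$, I first perturb slightly within $U$ into $\mathcal{T}_1$, and the periodic orbit persists since it is hyperbolic. On each triple I realize the local product composed with a small $Sp(1)$ element. The radius of $B$ must be uniform in $\mathcal{O}$; I would obtain this from the explicit inverse-function argument in the proof of Theorem~\ref{Franks}. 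The determinant $A_2B_3$ there is bounded away from zero uniformly, because $C_j\ne0$ and $L_{12}\ne0$ are uniform on compact parts of phase space. Orbits near the boundary of $\mathcal{P}_\gamma$ need separate care, and this is the main technical obstacle. The result is a table $\tilde\gamma\in U$ with a non-hyperbolic periodic orbit of period $n\ge p$, contradicting $U\subset\mathcal{H}_{\ge p}$.

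\textbf{Proof of (b).} Similarly, if the angle between $E^s$ and $E^u$ at some point of $\mathcal{O}$ is tiny, a single small shear realized by Theorem~\ref{Franks} at three consecutive points collapses the two directions. This produces a parabolic orbit, again a contradiction.

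\textbf{Step 2: a uniform contraction time.} Using (a) and (b), I would get uniform domination: there is $m$ with $\|DT^m_\gamma|_{E^s}\|\le\frac12$ and $\|DT^{-m}_\gamma|_{E^u}\|\le\frac12$ at every point of every periodic orbit of period $\ge p$. This follows Mañé's lemma: if no such $m$ existed, then along long stretches of orbits the contraction would fail. Perturbing by Theorem~\ref{Franks} along those stretches, and using the angle bound, would then create a non-hyperbolic periodic orbit in $U$.

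\textbf{Step 3: passing to the closure.} Periodic points of period $<p$ are finitely many and hyperbolic; by Corollary~\ref{finite and nondeg}, or by openness after a generic perturbation, I adjust $m$ to cover them too. Finally I take closures. Uniform angle bounds together with uniform contraction rates let the splittings $E^s\oplus E^u$ extend continuously to $\overline{\mathrm{Per}_{\mathcal{H}}(T_\gamma)}$, by taking limits of subspaces in the compact Grassmannian; invariance and the estimates pass to the limit. Compactness of the closure within the open phase space also needs to be checked, again using the uniform bounds. This yields the hyperbolic set.
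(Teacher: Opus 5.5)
Your route differs from the paper's in that you try to re-derive Ma\~n\'e's theorem by hand. The paper proves no estimates like your (a), (b) or Step 2. It collects the sequences $A_i(x)=DT_\gamma(T_\gamma^i x)$, $x\in\mathrm{Per}_{\mathcal H}(T_\gamma)$, into a family $\mathcal F$ and notes that $\mathcal F$ is bounded. It then shows that $\mathcal F$ is stably hyperbolic: otherwise Theorem \ref{Franks} would give a $\tilde\gamma$ that is $C^2$-close to $\gamma$ and has a non-hyperbolic periodic point, contradicting $\gamma\in\mathrm{int}_{C^2}(\mathcal H_{\ge p})$. Finally it quotes Theorem \ref{Contreras} to obtain a uniform splitting, which it extends to the closure as in your Step 3.

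Your Steps 1(a), 1(b) and 2 are in substance the content of Theorem \ref{Contreras}, and as written they are not proved. Step 2 is only ``this follows Ma\~n\'e's lemma''. The accumulation of small rotations you invoke in (a) only happens along a stretch of orbit on which the splitting is not dominated, so it is really the mechanism of Step 2. Indeed, for $2\times 2$ matrices of determinant one, a uniform domination time already forces $|\lambda|$ to grow exponentially with the period, so (a) is a consequence of Step 2 rather than an independent input. Quoting Theorem \ref{Contreras} would replace this whole layer.

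The genuine gap is the one you yourself call the main obstacle, and you leave it open. Any Ma\~n\'e-type argument needs a \emph{uniform} realization statement. This includes yours, and also the stable-hyperbolicity step of the paper, where the non-hyperbolic sequence $\widetilde{\mathcal A}$ differs from $\mathcal A_x$ in every factor, so its product may lie far outside the ball $B$ of Theorem \ref{Franks}. What is needed is some $\delta>0$, depending only on the $C^2$-neighbourhood $U$, such that $\delta$-perturbations of the products over consecutive triples along any periodic orbit are realized simultaneously, with disjoint supports, by a single $\tilde\gamma\in U$. Theorem \ref{Franks} only provides an orbit-dependent radius.

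Your disjoint-triples plan is the natural way to extract uniformity from its proof. However, the quantities controlling the Jacobian there, the coefficients $C_j$ and $B_3=-L_{12}^2$, tend to $0$ as an orbit approaches $\partial\mathcal P_\gamma$. That is exactly where Birkhoff periodic orbits of large period accumulate, and you give no argument for that regime. Without it, none of (a), (b), Step 2 is established. The paper's proof applies Theorem \ref{Franks} at this point without discussing uniformity, so your concern is well founded, but your proposal does not resolve it.

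Two smaller points. In Step 3 you cannot appeal to Corollary \ref{finite and nondeg}, which requires $\gamma\in\mathcal T_3$, nor to a generic perturbation, since the statement is about $T_\gamma$ for the given $\gamma$. On the other hand, your preliminary perturbation into $\mathcal T_1$ inside the contradiction argument is legitimate, and it takes care of a hypothesis of Theorem \ref{Franks} that the paper's proof passes over.
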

\begin{remark}
We remind that a similar theorem was proved for billiards in convex bodies \cite[Theorem 7.1]{BDDGT24}. This result uses the notion of the so-called stably hyperbolic families of periodic sequences, first introduced by R. Mañé and further developed in the symplectic setting by G. Contreras~\cite{Contreras10}. It is very likely that $\mathcal H_{\ge p} = \emptyset$. However, the answer to this question is unknown; namely, does every symplectic billiard have a non-hyperbolic periodic point of arbitrarily large period?    
\end{remark}
\noindent In order to prove Theorem~\ref{stably hyperbolicity}, following the main lines in \cite[Section 8]{Contreras10}, we first introduce the concept of stably hyperbolic families of periodic sequences. \\
A sequence $\mathcal{A} = \{ A_i\}_{i \in \mathbb{Z}}$, $A_i \in \mathrm{Sp}(1)$ is called periodic if there exists $n \ge 1$ such that $A_{i+n} = A_i$ for every $i \in \mathbb{Z}$. The smallest $n$ satisfying this property is called the period of the sequence, $\mathrm{per}(\mathcal{A}) = n$. A periodic sequence $\mathcal{A} = \{ A_i\}_{i \in \mathbb{Z}}$ is said to be hyperbolic if the product $A_{n-1} \cdots A_0$ is a hyperbolic matrix. \\
Let $\mathcal{F} = \{\mathcal{A}^\alpha\}_{\alpha\in A}$ be a family of periodic sequences (in $\mathrm{Sp}(1)$). We say that the family $\mathcal{F}$ is bounded if there exists $Q > 0$ such that $\|A^\alpha_i\|< Q$ for every $\alpha\in A$ and $i\in\mathbb{Z}$, where $\| \cdot \|$ denotes the operator norm on the space of square matrices. \\
Two families of periodic sequences $\mathcal{F}=\{\mathcal{A}^\alpha\}_{\alpha\in A}$ and $\widetilde{\mathcal{F}}=\{\widetilde{\mathcal{A}}^\alpha\}_{\alpha\in \tilde{A}}$ are said to be periodically equivalent if they have the same indexing set ($A = \tilde{A}$) and the same periods ($\mathrm{per}(\mathcal{A}^\alpha)=\mathrm{per}(\widetilde{\mathcal{A}}^\alpha)$ for every $\alpha\in A$). \\
Given two periodically equivalent families $\mathcal{F}$ and $\widetilde{\mathcal{F}}$,  we define the distance
$$
d(\mathcal{F},\widetilde{\mathcal{F}}):=
\sup\left\{\|A^\alpha_i-\tilde{A}^\alpha_i\|\colon \alpha\in A \text{ and } i\in\mathbb{Z}\right\}\, .
$$
Finally, a family $\mathcal{F} = \{\mathcal{A}^\alpha\}_{\alpha\in A}$ is called:
\begin{itemize} 
\item[$(i)$] \emph{hyperbolic} if every sequence $\mathcal{A}^\alpha$ in the family is hyperbolic;
\item[$(ii)$] \emph{uniformly hyperbolic} if it is hyperbolic with contraction and expansion rates uniform in $\alpha$.
\item[$(iii)$] \emph{stably hyperbolic} if there exists $\varepsilon>0$ such that every periodically equivalent family $\widetilde{\mathcal{F}}$ satisfying
$d(\mathcal{F},\widetilde{\mathcal{F}})<\varepsilon$ is also hyperbolic.
\end{itemize}
For further discussion on notions above, we refer to \cite[Section 8]{Contreras10}. These terminology was originally introduced by R. Mañ\'e in \cite[Pag 524]{Mane82}. In order to prove Theorem~\ref{stably hyperbolicity}, we will use --together with Franks' lemma-- the following theorem (see \cite[Theorem~8.1]{Contreras10}).  

\begin{theorem}\label{Contreras}
Any bounded stably hyperbolic family of periodic sequences is uniformly hyperbolic.
\end{theorem}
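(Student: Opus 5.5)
The statement is the Mañé–Contreras theorem \cite[Theorem~8.1]{Contreras10}. In the paper it is used as a black box. If I were to prove it, I would follow Mañé's strategy specialized to $\mathrm{Sp}(1)=SL(2,\mathbb{R})$. Throughout, $\mathcal{F}=\{\mathcal{A}^\alpha\}$ is bounded by $Q$ and stably hyperbolic with constant $\varepsilon$. For each $\alpha$ and $i$ let $E^s_i(\alpha),E^u_i(\alpha)$ be the stable and unstable lines of the periodic product $A^\alpha_{i+n-1}\cdots A^\alpha_i$. These are invariant: $A^\alpha_i E^{s,u}_i=E^{s,u}_{i+1}$. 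The goal is to find constants $C>0$ and $\lambda<1$, independent of $\alpha$, such that $\|A^\alpha_{i+k-1}\cdots A^\alpha_i|_{E^s_i}\|\le C\lambda^k$ for all $\alpha,i,k\ge0$, and symmetrically on $E^u$ for backward products.

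\textbf{Step 1: uniform angle bound.} I would show there is $\theta_0>0$ with $\angle(E^s_i(\alpha),E^u_i(\alpha))\ge\theta_0$ for all $\alpha,i$. Suppose instead the angle at some $(\alpha,i)$ is smaller than $\varepsilon/Q$. Let $R_t$ be the rotation by angle $t$, and replace $A^\alpha_{i-1}$ by $R_t A^\alpha_{i-1}$ for $t\in[0,\theta]$, where $\theta$ is the angle that carries $E^u_i$ onto $E^s_i$. Every such sequence is $\varepsilon$-close to $\mathcal{A}^\alpha$, so by stable hyperbolicity each perturbed product $P_t$ is hyperbolic. Hence its eigendirections $E^{s}(t),E^u(t)$ vary continuously in $t$ and never coincide. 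At $t=\theta$ the unstable line is sent into the stable one, and a projective-degree (rotation-number) argument on $\mathbb{RP}^1$ shows that along the path the two eigendirections would have to cross. Equivalently, $|\mathrm{tr}\,P_t|$ would have to pass through $2$ by the intermediate value theorem. This contradicts hyperbolicity for every $t$.

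I expect this step to be the main obstacle. The issue is making the crossing argument precise: I would track the lift of the projective action of $P_t$ to $\mathbb{R}$ and use that a hyperbolic element has an integer rotation number and two fixed points, whose order is reversed at $t=\theta$.

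\textbf{Step 2: uniform eigenvalue gap.} With the angle bound in hand, in adapted bases $\{e^s_i,e^u_i\}$ the matrices $A^\alpha_i$ are diagonal, with entries bounded by a constant $Q'$ depending only on $Q$ and $\theta_0$. Let $D_i$ act as $(1+\delta)$ on $E^s_i$ and as $(1+\delta)^{-1}$ on $E^u_i$. Then $D_i$ has determinant $1$, and by the angle bound $\|D_i-I\|\le c(\theta_0)\delta$. Multiply each $A^\alpha_i$ by $D_i$, choosing $\delta$ so that $\|D_i-I\|\,Q<\varepsilon$. The resulting family remains hyperbolic, and its product multiplies the stable eigenvalue $\lambda_\alpha$ by $(1+\delta)^n$. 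Since the family stays hyperbolic along the whole path from $0$ to $\delta$, we get $|\lambda_\alpha|(1+\delta)^{n}<1$, that is, $|\lambda_\alpha|^{1/n}<(1+\delta)^{-1}$ uniformly in $\alpha$.

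\textbf{Step 3: from per-period contraction to uniform contraction along blocks.} Step 2 alone only controls products over full periods. Following Mañé, I would fix a large block length $m$ and argue by contradiction. Suppose that for some $\alpha$ a large proportion of the blocks $\|A^{(m)}_{jm}|_{E^s}\|$ exceed $1/2$. Applying the Step 2 perturbation only on those blocks, and using boundedness to control the remaining ones, yields a nearby periodic sequence whose stable eigenvalue reaches modulus $1$, which is a contradiction. This gives $\frac1{k}\sum_j\log\|A^{(m)}_{jm}|_{E^s}\|\le-c<0$ along every orbit segment. A Pliss-type lemma then upgrades this to $C\lambda^k$ bounds at every starting index $i$. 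The unstable direction is handled by the same argument for the inverse sequence, and together with the angle bound of Step 1 this gives uniform hyperbolicity.
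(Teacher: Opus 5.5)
The paper does not prove this statement; it quotes it from \cite[Theorem~8.1]{Contreras10}, which goes back to Mañé. Your sketch therefore has to stand on its own. Steps 1 and 2 are sound, and Step 1 is easier than you fear. If $\theta$ is the angle of the rotation carrying $E^u_i$ to $E^s_i$, then $P_0$ moves each line of that short arc backwards by less than $\theta$ and moves no other line backwards. Hence $R_\theta P_0$ moves every line strictly forward, so it has no real eigendirection and is elliptic.

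\textbf{The gap is in Step 3.} Every perturbation you use there has the form $A_jD_j$ with $D_j$ preserving $E^s_j$ and $E^u_j$. Such perturbations only see the scalars $\sigma_j,\mu_j$ by which $A_j$ acts on the two lines, and only through full-period products. In dimension two $\|A^{(k)}_i|_{E^s}\|$ is multiplicative, so the sum of your block logarithms over a period is just $\log|\lambda_\alpha|$, which Step 2 already controls. The counting argument therefore says nothing about an arbitrary orbit segment. Pliss's lemma cannot repair this: it yields a positive density of indices from which all forward products contract, not every index.

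This is a genuine obstruction. Take period $n=3b$ with $A_j=\mathrm{diag}(1/2,2)$ for $2b$ steps followed by $\mathrm{diag}(2,1/2)$ for $b$ steps, and let $b\to\infty$. The splitting is orthogonal and the stable eigenvalue is $2^{-n/3}$. Hyperbolicity survives every small splitting-preserving perturbation. Yet $\|A^{(b)}_{2b}|_{E^s}\|=2^b$, so the family is not uniformly hyperbolic at every index. It fails to be stably hyperbolic only because two rotations of size about $2^{-b}$ at the two junctions make the period product elliptic. That kind of perturbation never appears in your argument.

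\textbf{The missing ingredient is Mañé's domination lemma.} In adapted unit bases, uniformly comparable to orthonormal ones by Step 1, write $A_j=\mathrm{diag}(\sigma_j,\mu_j)$. Suppose some block $[i,i+m)$ inside one period satisfies $\prod|\sigma_j/\mu_j|\geq 1/2$.
\begin{itemize}
\item Compose each $A_j$ in the block with the shear $e^u_j\mapsto e^u_j+\eta e^s_j$, and add one shear $e^s\mapsto e^s+\eta' e^u$ at the end of the block.
\item Orient the bases so that $\sigma_j,\mu_j>0$ on the block. The trace of the period product then changes by $\lambda_\alpha^{-1}\eta\eta'\sum_\ell p_\ell$, where $p_\ell$ are the tail products of $\sigma_j/\mu_j$ over the block.
\item Either some sub-block has ratio larger than $\eta^{-2}$, and two shears around it suffice. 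Or all $p_\ell\geq\eta^2/2$, and then $\sum_\ell p_\ell\geq\eta^{-2}$ as soon as $m\geq 2\eta^{-4}$.
\item In both cases some $\eta'=O(\eta)$ brings the trace to $\pm2$, contradicting stable hyperbolicity.
\end{itemize}
Hence $\prod_{j=i}^{i+m-1}|\sigma_j/\mu_j|<1/2$ for all $\alpha$ and $i$; periods at most $m$ are handled by your Step 2.

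Finally, $\det A_j=1$ gives $|\sigma_j\mu_j|=\sin\angle_j/\sin\angle_{j+1}$. This telescopes to $\|A^{(km)}_i|_{E^s}\|^2\leq 2^{-k}/\sin\theta_0$ for every $\alpha$ and $i$, and symmetrically for $E^u$. That per-index estimate is what the paper needs to put a hyperbolic splitting on $\overline{\mathrm{Per}_{\mathcal{H}}(T_\gamma)}$, and it is exactly what your Steps 2--3 do not deliver. Step 2 alone only gives uniformity of $|\lambda_\alpha|^{1/n_\alpha}$.
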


\begin{proof}[Proof of Theorem~\ref{stably hyperbolicity}] Let $\gamma\in \mathrm{int}_{C^2}(\mathcal{H}_{\ge p})$ and consider a hyperbolic periodic point $x\in\mathrm{Per}_{\mathcal{H}}(T_\gamma)$ of period $n$. For each $i=0,\ldots,n-1$ define
$A_i(x) := DT_\gamma(T_\gamma^i(x))$. Since the map $T_\gamma$ is area preserving, the matrices $A_i(x)$ belong to $\mathrm{Sp}(1)$. Let
$\mathcal{A}_x := \{A_i(x)\}_{i\in\mathbb{Z}}$ be the periodic sequence obtained by repeating these matrices periodically. Moreover, let 
$\mathcal{F} := \{\mathcal{A}_x \colon x \in \mathrm{Per}_{\mathcal{H}}(T_\gamma)\}$
be the family of all such periodic sequences. By compactness and continuity of the derivative of $T_\gamma$,  the family $\mathcal{F}$ is bounded. \\
Now, we claim that the family $\mathcal{F}$ is stably hyperbolic. Indeed, suppose by contradiction that $\mathcal{F}$ is not stably hyperbolic. Then, by definition, there exists a periodically equivalent family
$\widetilde{\mathcal{F}}$ arbitrarily close to $\mathcal{F}$
containing a sequence $\widetilde{\mathcal{A}}$ which is not hyperbolic. This means that there exists a periodic point $x \in \mathrm{Per}_{\mathcal{H}}(T_\gamma)$ of sufficiently large period, say $\geq p$,  such that the derivative sequence along the orbit can be perturbed arbitrarily slightly to produce a non-hyperbolic matrix. By Theorem~\ref{Franks}, such a perturbation of the derivative along the periodic orbit of $x$ can be realized by an arbitrarily $C^2$-small perturbation of the curve $\gamma$. In other words, there exists a curve $\tilde{\gamma}$ arbitrarily $C^2$-close to $\gamma$ for which the corresponding symplectic billiard map $T_{\tilde{\gamma}}$ has a non-hyperbolic periodic point. This contradicts the assumption that $\gamma\in\mathrm{int}_{C^2}(\mathcal{H}_{\ge p})$. Hence, the family $\mathcal{F}$ must be stably hyperbolic. \\
By Theorem~\ref{Contreras},  any bounded stably hyperbolic family
of periodic sequences is uniformly hyperbolic,  i.e. admits a uniform hyperbolic splitting.
Therefore, the derivative of $T_\gamma$ along every periodic points $x\in \mathrm{Per}_{\mathcal{H}}(T_\gamma)$ admits a uniform invariant splitting
$T_x\mathcal{P}_\gamma = E^s_x \oplus E^u_x$, with uniform contraction and expansion rates. Since the splitting depends continuously on the derivative along periodic
points (e.g., \cite[Proposition~6.4.4]{katok1997book}), it extends continuously to the closure $\overline{\mathrm{Per}_{\mathcal{H}}(T_\gamma)}$. The extension still satisfies the hyperbolicity estimates, hence $\overline{\mathrm{Per}_{\mathcal{H}}(T_\gamma)}$ is a hyperbolic set.
\end{proof}

\section{Elliptic islands for generic symplectic billiards} \label{Jose}

For $\gamma \in \mathcal{S}$, 
let $(t_1,s_1) \in \mathcal{P}_\gamma$ be an elliptic $n$-periodic point of $T_\gamma$, see Definition \ref{elliptic def}: this means that $DT^n_\gamma(t_1,s_1)$ has eigenvalues $e^{\pm i \theta}$ for some $\theta \in (0,2\pi) \setminus \{\pi\}$. Throughout this section we will assume that $(t_1,s_1)$ is non-resonant up to order 4, according to the next definition.

\begin{definition} \label{strongly resonant elliptic def} An elliptic n-periodic point $(t_1,s_1) \in \mathcal{P}_\gamma$ is non-resonant up to order 4 if $q \theta \notin 2\pi\mathbb{Z}$ for $q=2,3,4$. 
\end{definition}

\noindent In the study of stability of these points, one can use the so-called Birkhoff normal forms. This approach gives a canonical change of variables taking $T^n_\gamma$ into the form
$$(x,X) \mapsto R_{\Theta(r)}(x,X) + O(r^{4})\, ,$$
where $r^2 = x^2 + X^2$ and $R_{\Theta(r)}$ is the matrix of the counterclockwise rotation of angle 
$$\Theta(r) = \theta + \tau_1r^2$$ in the plane (see Theorem 2.12 in \cite{M73}). The leading coefficient $\tau_1$ is called first Birkhoff coefficient. If this coefficient is non-zero, Moser's twist theorem, see Theorem 2.13 in \cite{M73}, assures that $(t_1,s_1)$ is stable. In particular, it is proved that, in such a case, the map $T^n_\gamma$ has an invariant curve surrounding $(t_1,s_1)$ and the region bounded by the invariant curve is also invariant. \\

\noindent The following theorem, see Theorem 3 in \cite{BG10}, provides a sufficient condition for the first Birkhoff coefficient to be non-zero.
\begin{theorem}
\label{theorem BG}
Let $(0,0)$ be a non-resonant up to order 4 elliptic fixed point of a family of planar area-preserving maps $F_\varepsilon$ which are generated in a neighborhood of $(0,0)$ by $L_\varepsilon$ satisfying:
$$\partial_\varepsilon L_\varepsilon(x,X)|_{\varepsilon=0} = C\frac{x^4+X^4}{24} + O(x^5 + X^5), \quad \text{for some} \quad C\neq0
$$
and $\partial_{12}L_0(0,0)\neq0$. Then there exists $\varepsilon_0>0$ such that for every $\varepsilon\in(-\varepsilon_0,\varepsilon_0)\setminus\{0\}$ the origin $(0,0)$ has non-zero first Birkhoff coefficient so that it is a stable fixed point of $F_\varepsilon$.
\end{theorem}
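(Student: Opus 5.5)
The proof plan is to compute the first Birkhoff coefficient of $F_\varepsilon$ as a function of $\varepsilon$ and check that its $\varepsilon$-derivative at $\varepsilon=0$ is nonzero. The theorem is then immediate. Write $\tau_1(\varepsilon)$ for the first Birkhoff coefficient of $F_\varepsilon$ at the origin. The first step is to check that it is well defined and smooth in $\varepsilon$. The origin is a fixed point of every $F_\varepsilon$, and, after shrinking $\varepsilon$, it stays elliptic and non-resonant up to order $4$ by continuity of the eigenvalues. The Birkhoff normal form transformation up to order $3$ is obtained by solving finitely many homological equations whose denominators $1-e^{iq\theta}$, $q\le 4$, do not vanish. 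Hence $\tau_1(\varepsilon)$ is a smooth (rational) function of the Taylor coefficients of $L_\varepsilon$ at the origin up to order $4$.

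Once we show $\tau_1'(0)\neq 0$, we get $\tau_1(\varepsilon)=\tau_1(0)+\tau_1'(0)\varepsilon+O(\varepsilon^2)$. This vanishes for at most one $\varepsilon$ in a small interval, so $\tau_1(\varepsilon)\neq 0$ for all small $\varepsilon\neq 0$. If $\tau_1(0)\neq0$ this is automatic. If $\tau_1(0)=0$, the zero at $\varepsilon=0$ is isolated. In the intermediate case, where $\tau_1(0)\neq 0$ but small, the statement only asks for some $\varepsilon_0$, so we choose $\varepsilon_0$ below the nearest zero. Moser's twist theorem (Theorem 2.13 in \cite{M73}) then gives stability.

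The core computation is $\tau_1'(0)$. The quadratic part of $L_\varepsilon$ is unchanged at first order, since the perturbation is $C(x^4+X^4)/24$ plus higher-order terms. So the linear part, the rotation number $\theta$ and the normalising linear change of coordinates are unchanged to first order. The cubic terms of $L_0$ contribute to $\tau_1$ through a quadratic expression, and these terms are also unchanged at first order in $\varepsilon$. Therefore $\tau_1'(0)$ is a linear functional of the quartic perturbation alone, evaluated in the linear normalising coordinates. I would use the generating-function form of the map: $X=-\partial_1L(x,x')$, $X'=\partial_2L(x,x')$. I would compute the resonant $r^4$ average of the quartic perturbation after conjugating the linear part to a rotation. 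This average is the coefficient of $|z|^4$ in the quartic form written in the complex coordinate $z$ adapted to the rotation. Adding a quartic term to a generating function changes the interpolating Hamiltonian by, to leading order, the same quartic term expressed in the map's coordinates. So $\tau_1'(0)$ equals a nonzero constant times $C$ times a positive quantity. The constant involves $\partial_{12}L_0(0,0)$, which enters the change from generating-function variables $(x,x')$ to phase variables $(x,X)$; this is where the hypothesis $\partial_{12}L_0(0,0)\ne0$ is used. The positive quantity is the rotation average of $x^4+X^4$ in normalised coordinates, which is a positive-definite quartic.

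The main obstacle is this last computation. The linear change of coordinates bringing $DF_0(0)$ to a rotation is a general symplectic matrix, not a rotation, so the pushforward of $x^4+X^4$ is some other quartic. We must show its circle average is nonzero. I would argue by positivity: $x^4+X^4>0$ away from the origin, positivity is preserved under any linear change of variables, and the average of a positive quartic over the unit circle is strictly positive. The remaining care is in relating the perturbation of the generating function to the perturbation of the Hamiltonian at order $4$ without picking up extra terms. This should work because, at first order in $\varepsilon$, the cross terms with the cubic part of $L_0$ contribute only at orders $\ge5$ or average to zero by non-resonance.
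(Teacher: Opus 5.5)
The paper does not prove this statement. It quotes it from \cite{BG10} (Theorem 3 there) and uses it as a black box in the proof of Theorem~\ref{marzo}. Your sketch is therefore a genuinely independent route, and it is sound.

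Instead of computing the first Birkhoff coefficient, you compute only its first variation. Because $\partial_\varepsilon L_\varepsilon|_{\varepsilon=0}$ vanishes to fourth order, the $1$- and $2$-jets of $F_\varepsilon$ at the origin are stationary at $\varepsilon=0$. Hence so are $\theta$, the normalizing linear map, and the quadratic-in-$L_3$ contribution to $\tau_1$. With the $2$-jet fixed, $\tau_1$ is affine in the cubic jet, and its linear part is the resonant $z^2\bar z$ coefficient. That coefficient is a universal nonzero multiple of the circle average of the quartic part of the Hamiltonian generating $\partial_\varepsilon F_\varepsilon$, and positivity does the rest.

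What this buys over the citation is a transparent reason the conclusion holds. It is independent of $\theta$ and of the non-orthogonal normalizing map, and it works for any semidefinite, not identically zero, quartic perturbation, not just $x^4+X^4$. The paper's choice simply buys brevity.

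Three points should be tightened.

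First, in the statement $x$ and $X$ are the two \emph{position} arguments of $L_\varepsilon$ (in the application they are $t_1'-t_1$ and $t_n'-t_1$), not position and momentum. Writing $y$ for the momentum you called $X$, the perturbation is $\frac{C}{24}(x^4+x'^4)$. The hypothesis $\partial_{12}L_0(0,0)\neq0$ does not produce a multiplicative constant. Its role is to make $(x,y)\mapsto(x,x')$ a linear isomorphism (and $L_\varepsilon$ a generating function at all), which is exactly what makes this quartic a definite form in phase variables.

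Second, your ``remaining care'' step is settled by an exact identity rather than an averaging heuristic. Differentiating $F_\varepsilon^*(y\,dx)-y\,dx=d\bigl(L_\varepsilon(x,x'_\varepsilon(x,y))\bigr)$ in $\varepsilon$ gives $\partial_\varepsilon F_\varepsilon=X_{K_\varepsilon}\circ F_\varepsilon$, with $K_\varepsilon\circ F_\varepsilon=-\partial_\varepsilon L_\varepsilon(x,x'_\varepsilon(x,y))$; the sign depends on the orientation convention for the symplectic form. So no cross terms with the cubic part of $L_0$ ever arise. In phase variables, $K_0$ is exactly $-\frac{C}{24}(x^4+x'^4)$ plus terms of order at least $5$.

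Third, your endgame can be simplified. If $\tau_1(0)\neq0$, continuity suffices; if $\tau_1(0)=0$, then $\tau_1(\varepsilon)=\varepsilon\,(\tau_1'(0)+o(1))$. This needs only differentiability of $\tau_1$ at $0$, whereas your ``at most one zero'' claim needs $\tau_1$ to be $C^1$ near $0$. Also, $\tau_1$ is real-analytic, not rational, in the jet coefficients.
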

\noindent In what follows, specifically in Theorem \ref{marzo}, we will prove that the above theorem can be applied to a generic subset of strictly convex billiard tables, assuring that elliptic periodic points are stable. To prove this fact, we need to premise the next technical Lemma \ref{62}. Recall that $\mathcal{T}_1 \subset \mathcal{S}$ denotes the residual set such that, for every $\gamma \in \mathcal{T}_1$, every periodic orbit in $\gamma$ is in ``general position'' (that is, no periodic orbit in $\gamma$ passes --within one period-- through the same point more than once). Moreover $\mathcal{T}_5 \subset \mathcal{S}$ denotes the residual set such that every periodic orbit in $\gamma \in \mathcal{T}_5$ has at least one non self-conjugate point. We refer to Theorems \ref{T1} and \ref{T4} respectively. 
\begin{lemma} \label{62} For $\gamma \in \mathcal{T}_1 \cap \mathcal{T}_5$, let $\mathcal{O} = \{(t_1,s_1), (t_2,s_2), \ldots, (t_n,s_n)\}$ be a $n$-periodic orbit for $T_\gamma$, with $t_1$ non self-conjugated. Let $I$ be an interval in $[0,2\pi)$ such that $t_1 \in I$ and $t_j \notin I$ for every $j \ne 1$. Given $t_1',t_{n+1}' \in I$, there exist $\tilde{\gamma}_\varepsilon$, arbitrarily small $C^k$-normal $\varepsilon$-perturbation of $\gamma$, $k \ge 2$, and $(t_j^\varepsilon)_{j=2}^n$ such that $t_j^\varepsilon \notin I$ for every $j = 2,\ldots,n$ and $(t_1', t_2^\varepsilon, \ldots, t_{n}^\varepsilon,t_{n+1}')$ gives an orbit segment for $\tilde{\gamma}_\varepsilon$.
\end{lemma}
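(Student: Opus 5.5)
The plan is to treat this as an implicit function theorem problem for the discrete Euler--Lagrange equations of the symplectic action with \emph{fixed endpoints}. The nondegeneracy needed is exactly the invertibility of the matrix $\mathcal{K}$ defined before Theorem~\ref{T4}, which holds because $t_1$ is not self-conjugate. For a curve $\tilde\gamma$ and endpoints $(t_1',t_{n+1}')$, consider the map
\[
F_{\tilde\gamma}(t_2,\ldots,t_n;t_1',t_{n+1}') := \bigl(\omega(\tilde\gamma'(t_j),\tilde\gamma(t_{j+1})-\tilde\gamma(t_{j-1}))\bigr)_{j=2}^{n},
\]
with the conventions $t_1=t_1'$ and $t_{n+1}=t_{n+1}'$. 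A zero of $F_{\tilde\gamma}$ with consecutive pairs in $\hat{\mathcal P}_{\tilde\gamma}$ is, by \eqref{GF}, an orbit segment $(t_1',t_2,\ldots,t_n,t_{n+1}')$ of $\tilde\gamma$. For $\tilde\gamma=\gamma$ and $t_1'=t_{n+1}'=t_1$, the original periodic orbit gives a zero $(t_2,\ldots,t_n)$.

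\textbf{Step 1: nondegeneracy.} Compute the partial Jacobian $\partial F_\gamma/\partial(t_2,\ldots,t_n)$ at this zero. Differentiating $\omega(\gamma'(t_j),\gamma(t_{j+1})-\gamma(t_{j-1}))$ in $t_j$ gives $B_j=\omega(\gamma''(t_j),\gamma(t_{j+1})-\gamma(t_{j-1}))$. Differentiating in $t_{j\pm1}$ gives $\pm\omega(\gamma'(t_j),\gamma'(t_{j\pm1}))$, which equals $l_{j+1}$ or $l_j$ up to the sign conventions of $L_{12}$. The Jacobian is therefore the tridiagonal matrix $\mathcal{K}$, possibly after conjugating by a diagonal sign matrix; I would verify this carefully from \eqref{jacobi}. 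Since $t_1$ is non self-conjugate, $\det\mathcal{K}\neq 0$.

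\textbf{Step 2: parameters and the perturbation.} Regard $F$ as a $C^1$ function of $(t_2,\ldots,t_n)$ and of the parameters consisting of the two endpoint positions $\tilde\gamma(t_1'),\tilde\gamma(t_{n+1}')\in\mathbb R^2$ together with the $C^2$-jet of $\tilde\gamma$ near the $t_j$, $j\ge2$. Take the perturbations $\tilde\gamma_\varepsilon=\gamma+\varepsilon h\,J\gamma'/\|\gamma'\|$ with $h$ supported in $I$. Then $\tilde\gamma_\varepsilon=\gamma$ near every $t_j$ with $j\ge2$, and only the endpoint positions vary. The implicit function theorem gives a neighbourhood $W$ of $(\gamma(t_1),\gamma(t_1))$ in $\mathbb R^4$ and a smooth map $W\ni(x,y)\mapsto(t_2(x,y),\ldots,t_n(x,y))$, close to $(t_2,\ldots,t_n)$, solving $F=0$ with $\tilde\gamma(t_1')=x$ and $\tilde\gamma(t_{n+1}')=y$. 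By continuity, and after shrinking $W$, we get $t_j^\varepsilon\notin I$ (the $t_j$ lie at positive distance from $I$) and the phase-space conditions $(\cdot,\cdot)\in\hat{\mathcal P}$, which are open. Given $t_1',t_{n+1}'\in I$, the pair $(\tilde\gamma_\varepsilon(t_1'),\tilde\gamma_\varepsilon(t_{n+1}'))$ lies in $W$ provided $I$ is short enough and $\varepsilon$ is small. This yields the $t_j^\varepsilon$, and the perturbation is indeed $C^k$-small and normal.

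\textbf{Main obstacle.} The quantifiers are the delicate point: the statement allows arbitrary $t_1',t_{n+1}'\in I$. I would first shrink $I$ so that $\gamma(I)\times\gamma(I)\subset W$, using that the implicit function theorem neighbourhood depends only on $\gamma$ and the orbit. The role of $\varepsilon$ (and the freedom in $h$) is then to adjust endpoint positions inside $W$ without touching the interior reflections. I would also double-check that the implicit function theorem neighbourhood is uniform over $C^k$-small normal perturbations supported in $I$. Because such perturbations leave $F$'s dependence on $t_2,\ldots,t_n$ unchanged, this uniformity should follow directly.
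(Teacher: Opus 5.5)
Your proposal is correct and is essentially the paper's argument. Both apply the implicit function theorem to the fixed-endpoint orbit equations, whose Jacobian in $(t_2,\ldots,t_n)$ is exactly $\mathcal{K}$ (row $j$ reads $l_j,\,B_j,\,l_{j+1}$, so no sign conjugation is needed) and is invertible because $t_1$ is not self-conjugate.

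Your use of the endpoint positions $\tilde\gamma(t_1'),\tilde\gamma(t_{n+1}')$ as the parameters is a useful sharpening, since it makes the solution uniform over all small normal perturbations supported in a suitably shrunk $I$. The paper just takes $\tilde\gamma_\varepsilon=\gamma$; its fallback curvature perturbation at $\gamma(t_2)$ via Lemma~\ref{lemma Viss} is not needed under the hypothesis. It also leaves both this uniformity and the smallness of $I$ implicit, although Theorem~\ref{marzo} relies on them.
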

\begin{proof} We first recall that $(t_1', t_2^\varepsilon, \ldots, t_{n}^\varepsilon,t_{n+1}')$ gives an orbit segment if
$$\begin{cases}
L_1(t_2^\varepsilon,t_3^\varepsilon) + L_2(t_1',t_2^\varepsilon) = 0 \\
L_1(t_j^\varepsilon,t_{j+1}^\varepsilon) + L_2(t_{j-1}^\varepsilon,t_j^\varepsilon) = 0 \qquad j=3,\ldots,n-1 \\
L_1(t_n^\varepsilon,t_{n+1}') + L_2(t_{n-1}^\varepsilon,t_n^\varepsilon) = 0
\end{cases}$$
Consequently, by the Implicit Function Theorem, $(t_j^\varepsilon)_{j=2}^n$ exist if the $(n-1) \times (n-1)$ matrix 
$$M = \begin{pmatrix}
B(2) & L_{12}(t_2,t_3) & 0 & 0 & \ldots & 0 \\
L_{12}(t_2,t_3) & B(3) & L_{12}(t_3,t_4) & 0 & \ldots & 0 \\
0 & L_{12}(t_3,t_4) & B(4) & L_{12}(t_4,t_5) & \ldots & 0 \\
\ldots & \ldots & \ldots & \ldots & \ldots & \ldots \\
0 & 0 & 0 & 0 & L_{12}(t_{n-1},t_n) & B(n)
\end{pmatrix}$$
is invertible, where
$$\begin{cases}
B(2) := L_{11}(t_2^\varepsilon,t_3^\varepsilon) + L_{22}(t_1',t_2^\varepsilon) \\
B(j) := L_{11}(t_j^\varepsilon,t_{j+1}^\varepsilon) + L_{22}(t_{j-1}^\varepsilon,t_j^\varepsilon) \qquad j=3,\ldots,n-1 \\
B(n) := L_{11}(t_n^\varepsilon,t_{n+1}') + L_{22}(t_{n-1}^\varepsilon,t_n^\varepsilon) = 0
\end{cases}$$
We notice that $M$ is invertible if and only if $t_1$ and $t_{n+1}$ are not conjugate. 
Equivalently, $M$ is invertible if and only if the $(1,2)$-entry of $DT_\gamma(\mathcal{O})$ is different from $0$. That is, see (\ref{eccola}):
$$\left(\prod_{j=1}^n A(t_j,t_{j+1})\right)_{12} = \left(\prod_{j=3}^n A(t_j,t_{j+1}) A(t_2,t_3) A(t_1,t_2) \right)_{12} \ne 0\, .$$
Let us denote 
$$\prod_{j=3}^n A(t_j,t_{j+1}) = \begin{pmatrix} a & b \\ c & d 
\end{pmatrix}\, ,$$
where $a,b,c$ and $d$ do not depend on $\gamma(t_2)$, $\gamma'(t_2)$ and $\gamma''(t_2)$. Then, by formula (\ref{jacobian}), the above condition on the $(1,2)$-entry of $DT_\gamma(\mathcal{O})$ reads:
\begin{equation} \label{condizione termine matrice}
a \cdot L_{11}(t_2,t_3) + b \cdot \left( -L_{12}^2(t_2,t_3) + L_{22}(t_2,t_3) \cdot L_{11} (t_2,t_3) \right) + L_{22}(t_1,t_2) \cdot \left( a + b \cdot L_{22}(t_2,t_3) \right)  \ne 0\, .   
\end{equation}
If this is the case, then $\tilde{\gamma}_\varepsilon = \gamma$ for every small $\varepsilon > 0$. Otherwise, if the expression in (\ref{condizione termine matrice}) is equal to zero, it is sufficient to notice that the curvature $k(t_2)$ is contained only in the terms $L_{11}(t_2,t_3)$ and $L_{22}(t_1,t_2)$ and then apply Lemma \ref{lemma Viss} to the point $\gamma(t_2)$ of the $n$-periodic orbit $\mathcal{O}$ of $T_\gamma$. This gives the arbitrarily small $C^2$-normal $\varepsilon$-perturbation of $\gamma$ and concludes the proof. 
\end{proof}
\noindent We are now ready to prove the main result of this section, essentaily based on Theorem \ref{theorem BG} and Lemma \ref{62}. We refer to Theorem 4 in \cite{BG10} for the corresponding result in the framework of Birkhoff billiards.  
\begin{theorem} \label{marzo}
Let $\gamma \in \mathcal{T}_1 \cap \mathcal{T}_5$ with $C^k$ boundary, $k \ge 5$. For every $n$-periodic orbit $\mathcal{O}$ for $T_\gamma$ of period at least $3$ and for every non-resonant up to order 4 elliptic $n$-periodic point $(t_1,s_1) \in \mathcal{O}$, there exists $\tilde{\gamma}_\varepsilon$, arbitrarily small $C^k$-normal $\varepsilon$-perturbation of $\gamma$ for $k \ge 5$, such that $(t_1,s_1)$ is a non-resonant up to order 4 elliptic $n$-periodic point for $T_{\tilde{\gamma}_\varepsilon}$ which is stable with non-zero Birkhoff coefficient. 
\end{theorem}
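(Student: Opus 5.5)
The strategy is to build a one-parameter family of tables $\tilde\gamma_\varepsilon$ that coincides with $\gamma$ up to second order (in fact up to third order) at $\gamma(t_1)$, changes the fourth-order jet of $\gamma$ there, and leaves $\gamma$ unchanged near the other bouncing points. Then we read off, through a generating function for $T^n_{\tilde\gamma_\varepsilon}$ near $(t_1,s_1)$, that the hypotheses of Theorem~\ref{theorem BG} are satisfied.

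\textbf{Step 1: a generating function for $T^n$.} Since $\gamma\in\mathcal{T}_1\cap\mathcal{T}_5$, the orbit $\mathcal{O}$ passes through each of its points only once (Theorem~\ref{T1}), so we may choose an interval $I\ni t_1$ containing no other $t_j$. Up to a cyclic relabelling we may assume $t_1$ is not self-conjugate (Theorem~\ref{T4}). By Lemma~\ref{62}, for $t_1',t_{n+1}'$ near $t_1$ there is a unique orbit segment $(t_1',t_2(t_1',t_{n+1}'),\dots,t_n(t_1',t_{n+1}'),t_{n+1}')$ whose intermediate points lie outside $I$. Here no perturbation is needed, because non-self-conjugacy makes the matrix $M$ in that lemma invertible, and the implicit function theorem gives smooth dependence. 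The function
\[
\mathbf L(x,X):=\sum_{j=1}^{n}L(t_j,t_{j+1}),\qquad t_1=t_1+x,\ t_{n+1}=t_1+X,
\]
evaluated along this segment, is therefore a generating function for $T^n_\gamma$ near $(t_1,s_1)$. Indeed, the interior derivatives vanish, and $\partial_x\mathbf L=L_1(t_1',t_2)=-s_1'$ and $\partial_X\mathbf L=L_2(t_n,t_{n+1}')=s_{n+1}'$. Moreover $\partial_{12}\mathbf L(0,0)\neq 0$, since it equals, up to a nonzero factor, the $(1,2)$-entry of $DT^n_\gamma$, which is nonzero precisely because $t_1$ is not self-conjugate.

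\textbf{Step 2: the perturbation.} Set $\tilde\gamma_\varepsilon(t)=\gamma(t)+\varepsilon\,\phi(t-t_1)\,\nu(t)$, where $\nu$ is the inward normal and $\phi$ is supported in a small subinterval of $I$ with $\phi(u)=u^4/24$ near $0$. This perturbation is $C^k$-small for $k\ge5$ and keeps $\tilde\gamma_\varepsilon\in\mathcal S$. Because $\phi$ vanishes to third order at $0$, the jets $j^3\tilde\gamma_\varepsilon(t_1)$ and $j^3\gamma(t_1)$ agree. Since the table is unchanged near $t_2,\dots,t_n$, the orbit $\mathcal O$ persists, together with $DT^n$ at $(t_1,s_1)$. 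Hence $(t_1,s_1)$ remains elliptic and non-resonant up to order 4 for every $\varepsilon$.

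Only the terms $L(t_n,t_{n+1})$ and $L(t_1,t_2)$ of $\mathbf L$ involve $\tilde\gamma_\varepsilon$ near $t_1$. Because the intermediate points are critical, the envelope theorem gives
\[
\partial_\varepsilon\mathbf L_\varepsilon|_{\varepsilon=0}
=\tfrac1{24}\big(x^4\,\omega(\nu(t_1+x),\gamma(t_2))+X^4\,\omega(\gamma(t_n),\nu(t_1+X))\big)+O(x^5+X^5).
\]
We therefore expect $\partial_\varepsilon\mathbf L_\varepsilon|_{\varepsilon=0}=\tfrac1{24}(C_1x^4+C_2X^4)+O(|x|^5+|X|^5)$ with $C_1=\omega(\nu(t_1),\gamma(t_2))$ and $C_2=\omega(\gamma(t_n),\nu(t_1))$. (The $O$ term should also absorb the dependence of $t_2,t_n$ on $(x,X)$.)

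\textbf{Step 3: matching Theorem~\ref{theorem BG}, the main obstacle.} Theorem~\ref{theorem BG} requires equal coefficients $C$ for $x^4$ and $X^4$. We need $C_1=C_2\neq0$; this is where I expect the main work. Write $\nu(t_1)$ as a multiple of $J\gamma'(t_1)$. Then
\[
C_1-C_2\;\propto\;\omega\big(J\gamma'(t_1),\gamma(t_2)+\gamma(t_n)\big),
\]
while by the billiard law $\gamma(t_2)-\gamma(t_n)$ is parallel to $\gamma'(t_1)$. By Proposition~\ref{properties}(3) we may choose the origin $O$, so we place $O$ on the line through $\gamma(t_1)$ parallel to $\gamma'(t_1)$, which fixes the common value. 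This should give $C_1=C_2=\omega(J\gamma'(t_1),\gamma(t_2)-\gamma(t_1))$, which is nonzero by strict convexity. If the origin choice does not align the two coefficients cleanly, I would fall back on using different quartic amplitudes on the two sides, or on an affine normalization (Proposition~\ref{properties}(4)) that symmetrizes the picture.

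With $C\neq0$ and $\partial_{12}\mathbf L_0(0,0)\neq0$, Theorem~\ref{theorem BG} yields $\varepsilon_0>0$ such that for every $0<|\varepsilon|<\varepsilon_0$ the first Birkhoff coefficient at $(t_1,s_1)$ is nonzero. Moser's twist theorem then gives stability.
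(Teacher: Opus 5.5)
Your Steps 1--2 follow the paper's proof: the same quartic normal bump at $t_1$, the same envelope computation giving $\partial_\varepsilon\mathbf L_\varepsilon|_{\varepsilon=0}=\tfrac1{24}(C_1x^4+C_2X^4)+O(|x|^5+|X|^5)$, and then Theorem~\ref{theorem BG} plus Moser. However, Step 3, which you rightly flag as the crux, is resolved incorrectly.

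Measuring positions from the origin $O$, you have $C_1=\det(\nu(t_1),\gamma(t_2)-O)$ and $C_2=\det(\gamma(t_n)-O,\nu(t_1))$. Hence
\[
C_1+C_2=\det\bigl(\nu(t_1),\gamma(t_2)-\gamma(t_n)\bigr),\qquad C_1-C_2=\det\bigl(\nu(t_1),\gamma(t_2)+\gamma(t_n)-2O\bigr).
\]
So $C_1=C_2$ exactly when $O$ lies on the line through the midpoint $m=\tfrac12(\gamma(t_2)+\gamma(t_n))$ in the direction $\nu(t_1)$. The tangent line at $\gamma(t_1)$ is the wrong line: it meets the correct one in a single point only.

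Because $C_1+C_2$ does not depend on $O$, the common value is always $C=\tfrac12\det(\nu(t_1),\gamma(t_2)-\gamma(t_n))$. The value $\omega(J\gamma'(t_1),\gamma(t_2)-\gamma(t_1))$ you wrote is $C_1$ for $O=\gamma(t_1)$. At that origin, $C_1\neq C_2$ unless $\gamma(t_1)$ happens to lie on the normal line through $m$.

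Also, $C\neq0$ comes from the symplectic billiard law, not from strict convexity. The vector $\gamma(t_2)-\gamma(t_n)$ is a nonzero multiple of $\gamma'(t_1)$ (nonzero because $n\ge3$ and the orbit is in general position), so it is transverse to $\nu(t_1)$.

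The repair is the paper's choice: take $O=m\in\mathrm{int}(\Omega)$, so that $\gamma(t_2)+\gamma(t_n)=0$ and $C_1=C_2=\tfrac12\omega(\nu(t_1),\gamma(t_2)-\gamma(t_n))\neq0$. Your fallback of putting different quartic amplitudes on the two sides is not available. Both the $x^4$ and $X^4$ terms come from the same bump at the same boundary point, so their ratio is $C_1/C_2$ for every choice of $\phi$.

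A minor point: $\partial_{12}\mathbf L(0,0)=-1/(DT^n_\gamma)_{12}$, not a multiple of $(DT^n_\gamma)_{12}$. It is still nonzero once non-self-conjugacy makes the generating function exist.
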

\begin{proof} Let $I$ be a small interval in $[0,2\pi)$ such that $t_1 \in I$ and $t_i \notin I$ for every $i \ne 1$. Let $\mu:\mathbb{S} \to \mathbb{R}$ be a $C^{\infty}$-function satisfying:
\begin{enumerate}
\item $\mu(t) = 0$ for every $t \notin I$.
\item $\mu(t_1) = \mu'(t_1) = 0 = \mu''(t_1) = \mu'''(t_1)$ and $\mu^{(4)}(t_1) = 1$.
\item $\|\mu\|_k$ small enough to guarantee strictly positive curvature. 
\end{enumerate}
We consider the following $\varepsilon$-family of $C^k$-normal $\varepsilon$-perturbations of $\gamma$:
$$\tilde{\gamma}_\varepsilon(t) := \gamma(t) + \varepsilon \mu(t){\textbf{n}}(t)\, .$$
Condition 2. guarantees that $(t_1,s_1)$ is a non-resonant up to order 4 elliptic $n$-periodic point also for $T_{\tilde{\gamma}_\varepsilon}$. In particular, by condition 2. we have that:
\begin{equation} \label{sviluppo gamma}
\tilde{\gamma}_\varepsilon(t) = \gamma(t) + \varepsilon \frac{(t-t_1)^4}{4!}{\bf{n}}(t_1) + O(|t-t_1|^5)\, .
\end{equation}
Since $\gamma \in \mathcal{T}_1 \cap \mathcal{T}_5$, Lemma \ref{62} applies assuring that, given $t_1', t_{n+1}' \in I$, for any $\varepsilon > 0$ there exists $(t_j^\varepsilon)_{j=2}^{n-1}$ such that every $t_j^\varepsilon \notin I$ and $(t_1',t_2^\varepsilon,\ldots,t_{n-1}^\varepsilon,t_n')$ gives an orbit segment for $T_{\tilde{\gamma}_\varepsilon}$, that is:
~\newline
\begin{equation} \label{piece of orbit}
\begin{cases}
\omega(\tilde{\gamma}'_\varepsilon(t_2^\varepsilon),\tilde{\gamma}_\varepsilon(t_3^\varepsilon) - \tilde{\gamma}_\varepsilon(t'_1)) = 0 \\
\omega(\tilde{\gamma}'_\varepsilon(t_j^\varepsilon),\tilde{\gamma}_\varepsilon(t_{j+1}^\varepsilon) - \tilde{\gamma}_\varepsilon(t_{j-1}^\varepsilon)) = 0, \qquad j=3,\ldots,n-2 \\
\omega(\tilde{\gamma}'_\varepsilon(t_{n-1}^\varepsilon),\tilde{\gamma}_\varepsilon(t'_n) - \tilde{\gamma}_\varepsilon(t_{n-2}^\varepsilon)) = 0
\end{cases}
\end{equation}
Let now use condition 1. to write the corresponding action, which is:
$$\omega_\varepsilon(t_1',t_2^\varepsilon,\ldots,t_{n-1}^\varepsilon,t_n') := \omega(\tilde{\gamma}_\varepsilon(t'_1),\gamma(t^\varepsilon_2)) + \sum_{j=2}^{n-1} \omega(\gamma(t^\varepsilon_j),\gamma(t^\varepsilon_{j+1})) + \omega(\gamma(t^\varepsilon_{n-1}),\tilde{\gamma}_\varepsilon(t'_n))\, .$$
In order to apply Theorem \ref{theorem BG} --assuring the stability of $(t_1,s_1)$-- we need to compute
$$\frac{\partial}{\partial \varepsilon}\omega_\varepsilon(t_1',t_2^\varepsilon,\ldots,t_{n-1}^\varepsilon,t_n')|_{\varepsilon = 0} =
\left[ \omega \left(\frac{\partial}{\partial \varepsilon} \tilde{{\gamma}}_\varepsilon(t_1'),\gamma(t_2^\varepsilon)\right) + \omega \left(\gamma(t_{n-1}^\varepsilon),\frac{\partial}{\partial \varepsilon} \tilde{{\gamma}}_\varepsilon(t_n')\right)\right]|_{\varepsilon = 0}$$
We stress that, in the equality above, all other terms cancel by (\ref{piece of orbit}). \\
Then, by (\ref{sviluppo gamma}), we get:
\begin{eqnarray*}
\omega \left(\frac{\partial}{\partial \varepsilon} \tilde{{\gamma}}_\varepsilon(t_1'),\gamma(t_2^\varepsilon)\right) &=& \frac{(t_1'-t_1)^4}{4!}\omega({\bf{n}}(t_1),\gamma(t^\varepsilon_2)) + O(|t_1'-t_1|^5) \\
&=& \frac{(t_1'-t_1)^4}{4!}\omega({\bf{n}}(t_1),\gamma(t_2)) + O((|t_1'-t_1| + |t_n' - t_1|)^5)\, ,
\end{eqnarray*}
where, in the last equality, we have used $\gamma(t^\varepsilon_2) = \gamma(t_2) + O(|t_1'-t_1| + |t_n' - t_1|)$. \\
Similarly:
\begin{eqnarray*}
\omega \left(\gamma(t_{n-1}^\varepsilon),\frac{\partial}{\partial \varepsilon} \tilde{{\gamma}}_\varepsilon(t_n')\right) &=& \frac{(t_n'-t_1)^4}{4!}\omega(\gamma(t^\varepsilon_{n-1}),{\bf{n}}(t_1)) + O(|t_n'-t_1|^5) \\
&=& \frac{(t_n'-t_1)^4}{4!}\omega(\gamma(t_n),{\bf{n}}(t_1)) + O((|t_1'-t_1| + |t_n' - t_1|)^5)\, .
\end{eqnarray*}
In conclusion, we have that
$$\frac{\partial}{\partial \varepsilon}\omega_\varepsilon(t_1',t_2^\varepsilon,\ldots,t_{n-1}^\varepsilon,t_n')|_{\varepsilon = 0} = A \frac{(t_1'-t_1)^4}{4!} + B \frac{(t_n'-t_1)^4}{4!} + O((|t_1'-t_1| + |t_n' - t_1|)^5)$$
where
$$A = A(t_1,t_2) = \omega({\bf{n}}(t_1),\gamma(t_2)) \quad \text{and} \quad B = B(t_1,t_n) = \omega(\gamma(t_n),{\bf{n}}(t_1))\, .$$
Since the symplectic billiard map does not depend on the choice of the origin --see point 3. of Proposition \ref{properties}-- let $O$ in the interior of the table be such that $\gamma(t_2) + \gamma(t_n) = 0$. This gives $A = B$, in fact:
$$A = \omega({\bf{n}}(t_1),\gamma(t_2)) = \omega(\overbrace{\gamma(t_n) + \gamma(t_2)}^{=0} - \gamma(t_2),{\bf{n}}(t_1)) = B\, .$$
With this choice:
$$\frac{\partial}{\partial \varepsilon}\omega_\varepsilon(t_1',t_2^\varepsilon,\ldots,t_{n-1}^\varepsilon,t_n')|_{\varepsilon = 0} = A \left( \frac{(t_1'-t_1)^4 + (t_n'-t_1)^4}{4!} \right) + O((|t_1'-t_1| + |t_n' - t_1|)^5)\, .$$
Notice that $A \ne 0$ since $2A = A + B = \omega({\bf{n}}(t_1),\gamma(t_2)-\gamma(t_n)) \ne 0$. Consequently, we can apply Theorem \ref{theorem BG} assuring that $(t_1, s_1)$ is a non-resonant up to order 4 elliptic $n$-periodic point
for $T_{\tilde{\gamma}_\varepsilon}$ with non-zero Birkhoff coefficient. In particular, by \cite[Theorem 2.13]{M73}, $(t_1, s_1)$ is stable. 
\end{proof}

\section{Transversality and positive topological entropy}\label{sec:transversality}

In this section we prove a perturbative result ensuring that heteroclinic
intersections between hyperbolic periodic orbits can be made transverse by an arbitrarily small normal perturbation of the symplectic billiard table. Similar results have been proved for classical billiards, e.g. \cite{Donnay2005,DCOKPC07} and --more recently-- \cite{FLORIO} in the analytic category. \\
~\newline
\noindent Let $\gamma \in \mathcal{S}$. Recall that the stable and unstable manifolds of a hyperbolic periodic point $x \in \mathcal{P}_\gamma$ are defined respectively by
\begin{eqnarray*}
W^{s}(x) := \{z\in \mathcal{P}_\gamma \colon d(T_\gamma^{n}(z),T_\gamma^{n}(x)) \to 0 \text{ as } n\to +\infty\}
\end{eqnarray*}
and
\begin{eqnarray*}
W^u(x) := \{z\in \mathcal{P}_\gamma \colon d(T_\gamma^{-n}(z),T_\gamma^{-n}(x))\to 0 \text{ as } n\to +\infty\}\, ,
\end{eqnarray*}
where $d$ denotes the distance function in $\mathcal{P}_\gamma$. Moreover, if $x \ne y$ are hyperbolic periodic points of $T_\gamma$, $z\in W^s(x)\cap W^u(y)$ is a heteroclinic point. The previous intersection is said to be transverse if $T_z W^s(x) \oplus T_z W^u(y) = \mathbb{R}^2$. By the stable manifold theorem, $W^s(x)$ and $W^u(x)$ are $C^1$-immersed submanifolds (curves in our case). In particular, in a sufficiently small neighborhood of a heteroclinic point $z$, the local stable and unstable manifolds are graphs of $C^1$-functions. In our specific case,  using the coordinates $(t,s)$, the local branches of the invariant manifolds can be written as
graphs $s = \varphi^{s,u}(t)$. Thus, the intersection is transverse if the derivatives of the corresponding
functions are different at the intersection point. The following proposition shows that a non-transverse heteroclinic intersection can be made transverse by an arbitrary small normal perturbation of the boundary.
\begin{proposition}\label{prop:transversality}
For $\gamma \in \mathcal{T}_1$, let $x, y$ be hyperbolic periodic points for $T_\gamma$ such that $W^s(x)\cap W^u(y) \ne \emptyset$. Then there exist a heteroclinic (homoclinic, if $x = y$) point $z$ of $T_{\gamma}$ and $\gamma_\varepsilon$, arbitrarily small $C^k$-normal $\varepsilon$-perturbation of $\gamma$, $k \ge 2$, such that:
\begin{enumerate}
\item $x$ and $y$ are hyperbolic periodic points for $T_{\gamma_\varepsilon}$,
\item $z$ is a heteroclinic (homoclinic, if $x = y$) point of $T_{\gamma_\varepsilon}$,
\item $T_z W^s(x) \oplus T_z W^u(y)=\mathbb{R}^2$ for $T_{\gamma_\varepsilon}$.
\end{enumerate}
\end{proposition}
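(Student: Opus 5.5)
The plan is to follow the classical scheme of Donnay and of Dias Carneiro–Oliffson Kamphorst–Pinto-de-Carvalho. A heteroclinic point is perturbed only near one bounce point that is not visited by the orbits of $x$ and $y$, and the perturbation is chosen to move the local unstable branch relative to the local stable one.

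\textbf{Step 1: choice of $z$.} Let $\mathcal{O}_x,\mathcal{O}_y$ be the orbits of $x,y$; by Corollary~\ref{finite and nondeg} and $\gamma\in\mathcal{T}_1$, both are finite sets in general position. Take $z_0\in W^s(x)\cap W^u(y)$ and write $T_\gamma^m(z_0)=(\tau_m,\sigma_m)$. Since $z_0$ is not periodic, the sequence of bounce parameters $\tau_m$ accumulates only on the parameters of $\mathcal{O}_x$ as $m\to+\infty$ and on those of $\mathcal{O}_y$ as $m\to-\infty$. Hence we can choose an iterate $z=T_\gamma^{m_0}(z_0)=(t_0,s_0)$ such that the point $\gamma(t_0)$:
\begin{itemize}
\item is not a bounce point of $\mathcal{O}_x\cup\mathcal{O}_y$;
\item is visited by the orbit of $z$ only at time $0$, i.e.\ $\tau_m\neq t_0$ for $m\neq 0$;
\item is not an accumulation point of $\{\tau_m\}$.
\end{itemize}
The accumulation set is finite and $t_0$ is chosen away from it, so the third condition holds. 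Then there is a small interval $I\ni t_0$ meeting no other $\tau_m$ and no point of $\mathcal{O}_x\cup\mathcal{O}_y$. Any normal perturbation $\gamma_\varepsilon=\gamma+\varepsilon\mu\,\mathbf{n}$ with $\operatorname{supp}\mu\subset I$ therefore leaves $x$, $y$ and their derivatives unchanged. It also leaves unchanged the local stable manifold of $x$ and the local unstable manifold of $y$ near $T_\gamma(z)$ and $T_\gamma^{-1}(z)$ respectively, since these are determined by forward (resp.\ backward) orbits that never enter $I$. This gives items 1 and 2, provided we impose $\mu(t_0)=\mu'(t_0)=0$. Indeed, by the generating function relation \eqref{GF}, the condition $\omega(\gamma'(t_0),\gamma(\tau_1)-\gamma(\tau_{-1}))=0$ then persists, so the segment $(\tau_{-1},t_0,\tau_1)$ remains an orbit segment for $\gamma_\varepsilon$.

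\textbf{Step 2: variational computation of the splitting.} Near $z$, parametrize $W^u(y)$ by the incoming point $t'\in I$ and $W^s(x)$ by the same parameter. Equivalently, work with the twist-map picture: $W^u_{loc}$ just before the bounce is a fixed curve of pairs $(\tau_{-1}(t'),t')$, and $W^s_{loc}$ just after is a fixed curve $(t',\tau_1(t'))$. For the perturbed table, the image of the unstable curve under $T_{\gamma_\varepsilon}$ at the bounce $t'$ has $s$-coordinate $L_2^\varepsilon(\tau_{-1}(t'),t')$, while the stable curve has $s$-coordinate $-L_1^\varepsilon(t',\tau_1(t'))$ in the $(t,s)$ chart. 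Their difference is
\[
\Delta_\varepsilon(t')=\omega\big(\gamma_\varepsilon'(t'),\gamma(\tau_1(t'))-\gamma(\tau_{-1}(t'))\big).
\]
Differentiating in $\varepsilon$ at $\varepsilon=0$ gives
\[
\partial_\varepsilon\Delta_\varepsilon(t')=\omega\big((\mu\mathbf{n})'(t'),\gamma(\tau_1)-\gamma(\tau_{-1})\big).
\]
Choosing $\mu(t_0)=\mu'(t_0)=0$ and $\mu''(t_0)=1$ gives $\partial_{t'}\partial_\varepsilon\Delta_\varepsilon(t_0)=\omega(\mathbf{n}(t_0),\gamma(\tau_1)-\gamma(\tau_{-1}))$. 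This is nonzero because $\gamma(\tau_1)-\gamma(\tau_{-1})$ is parallel to $\gamma'(t_0)$ and nonzero, as in the argument that $C_{j-1}\ne0$ in Theorem~\ref{Franks}. Hence $\partial_{t'}\Delta_\varepsilon(t_0)\neq0$ for all small $\varepsilon\ne0$, which is the statement that the slopes of the stable and unstable graphs differ at $z$, i.e.\ item 3.

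\textbf{Main obstacle.} The delicate step is justifying that the invariant-manifold branches entering and leaving the bounce at $t_0$ are genuinely unaffected by the perturbation. One must make sure the whole orbit of $z$ except time $0$, together with the germs of $W^u(y)$ and $W^s(x)$ along it, avoids $I$ for all $t'$ near $t_0$. This requires the accumulation analysis in Step 1 and possibly shrinking $I$ and the neighborhood of $z$. One also needs the $C^k$ smallness of $\varepsilon\mu$, which preserves strong convexity and hence hyperbolicity of $x,y$. For $k\ge2$ the latter is automatic because their derivatives are untouched.
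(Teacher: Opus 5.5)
Your argument is essentially the paper's proof. It uses the same normal perturbation $\gamma_\varepsilon=\gamma+\varepsilon\mu\mathbf{n}$ with $\mu(t_0)=\mu'(t_0)=0$, $\mu''(t_0)=1$, supported in an interval that meets the heteroclinic orbit and the orbits of $x,y$ only at $t_0$, and the same non-degeneracy $\omega(\mathbf{n}(t_0),\gamma(\tau_1)-\gamma(\tau_{-1}))\neq 0$ from the symplectic billiard law. The paper follows the slopes of $T_zW^s$ and $T_zW^u$ through the projective action of $DT_{\gamma_\varepsilon}^{-1}$ at $T_\gamma(z)$ and of $DT_{\gamma_\varepsilon}$ at $T_\gamma^{-1}(z)$. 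Your $\partial_{t'}\Delta_\varepsilon(t_0)$ captures the same first-order variation $\delta L_{11}(t_0,\tau_1)+\delta L_{22}(\tau_{-1},t_0)$ in a single expression.

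Two minor caveats. First, the branches $(\tau_{-1}(t'),t')$ and $(t',\tau_1(t'))$ are not literally fixed in pair coordinates: they move by $O(\varepsilon|t'-t_0|^2)$. This does not affect $\partial_{t'}\partial_\varepsilon\Delta_\varepsilon(t_0)$, and in the $(t,s)$ chart $W^u$ near $T_\gamma^{-1}(z)$ and $W^s$ near $T_\gamma(z)$ are genuinely unchanged. Second, the claim that some iterate bounces at a point visited only once is asserted rather than derived from your accumulation argument. The paper does the same, and its appeal to $\gamma\in\mathcal{T}_1$ strictly covers only periodic orbits.
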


\begin{proof}
Choose $z \in W^s(x)\cap W^u(y)$ such that the periodic orbits of $x$ and $y$ do not pass through the same point corresponding to $z$ on the boundary. Denote by $\mathcal{O}(z)=(z_j)_{j\in\mathbb{Z}}$ the orbit of the heteroclinic point $z_1:=z$ under $T_\gamma$, and write $z_j=(t_j,s_j)$. Since $\gamma \in \mathcal{T}_1$, there is an interval $I \subset [0,2\pi)$ such that $t_1 \in I$ and $t_j \notin I$ for $j \ne 1$. Moreover, by hypothesis, we may also assume that $I$ does not contain any point corresponding to the periodic orbits of $x$ and $y$. \\
We proceed to perform a perturbation supported within $I$, as in the proof of Theorem \ref{marzo}. Let $\mu:\mathbb{S} \to \mathbb{R}$ be a $C^{\infty}$-function satisfying:
\begin{enumerate}
\item $\mu(t) = 0$ for every $t \notin I$.
\item $\mu(t_1) = 0 = \mu'(t_1)$ and $\mu''(t_1) = 1$.
\item $\|\mu\|_k$ small enough to guarantee strictly positive curvature. 
\end{enumerate}
We consider the following $\varepsilon$-family of $C^k$-normal $\varepsilon$-perturbations of $\gamma$: $$\gamma_\varepsilon(t) := \gamma(t) + \varepsilon \mu(t){\textbf{n}}(t)\, .$$ 
We notice that, by second order contact at $t_1$, the periodic orbits $\mathcal{O}(x)$ and $\mathcal{O}(y)$ persist, are hyperbolic, and $(z_n)_{n\in\mathbb{Z}}$ remains a heteroclinic orbit for $T_{\gamma_\varepsilon}$.
Now, by the invariance of the stable manifold at the point $z$, we have:
$$
T_z W^s_{T_{\gamma_\varepsilon}}
(x) = DT_{\gamma_\varepsilon}^{-1}(T_{\gamma_\varepsilon}(z)) T_{T_{\gamma_\varepsilon}(z)} W^s_{T_{\gamma_\varepsilon}}
(T_{\gamma_\varepsilon}(x)) \, .
$$
Since $T_{\gamma_\varepsilon}(z)= T_{\gamma}(z)$ and $T_{\gamma_\varepsilon}(x)= T_{\gamma}(x)$, we have:
$$
T_{T_{\gamma_\varepsilon}(z)} W^s_{T_{\gamma_\varepsilon}}
(T_{\gamma_\varepsilon}(x)) = T_{T_{\gamma}(z)} W^s_{T_{\gamma}}
(T_{\gamma}(x)) = DT_\gamma(z) T_z W^s_{T_\gamma}
(x)\, ,
$$
so that, we may write
\begin{equation}\label{eq TWs}
T_z W^s_{T_{\gamma_\varepsilon}}
(x) = DT_{\gamma_\varepsilon}^{-1}(T_{\gamma}(z)) DT_\gamma(z) T_z W^s_{T_\gamma}
(x)\, .
\end{equation}
Similarly, for the unstable manifold at $z$, we have:
\begin{equation}\label{eq TWu}
T_z W^u_{T_{\gamma_\varepsilon}}(y) =DT_{\gamma_\varepsilon}(T^{-1}_{\gamma}(z)) DT_{\gamma}^{-1}(z) T_z W^u_{T_{\gamma}}(y)\, .
\end{equation}
In order to compare the tangent spaces $T_z W^s_{T_{\gamma_\varepsilon}}(x)$ and $T_z W^u_{T_{\gamma_\varepsilon}}(y)$, we describe how the derivative $DT_\gamma$ acts on directions via its induced projective action. A direction in $\mathbb{R}^2$ can be represented by its slope $m \in \overline{\mathbb{R}}$, that is, by the vector $(1,m)$. Under this identification, by formula \eqref{differential}, the action of $DT_\gamma$ on directions is described by its projective map:
$$
\widehat{DT_\gamma}(t,s)(m) = L_{22} - \frac{L_{12}^2}{L_{11}+m},\quad m\in \overline{\mathbb{R}}\, .
$$
Similarly:
$$
\widehat{DT_\gamma^{-1}}(t,s)(m) = -L_{11} + \frac{L_{12}^2}{L_{22}-m},\quad m\in \overline{\mathbb{R}}\, .
$$ 
Therefore, denoting by $m^{s}_\varepsilon$ (resp. $m^{u}_\varepsilon$) the slope of $T_z W^s_{T_{\gamma_\varepsilon}}(x)$ (resp. $T_z W^u_{T_{\gamma_\varepsilon}}(y)$), by \eqref{eq TWs} and \eqref{eq TWu} we have:
\begin{align*}
m^{s}_\varepsilon &= -\omega(\gamma_\varepsilon''(t_1),\gamma(t_2)) +\omega(\gamma''(t_1),\gamma(t_2)) + m^s_0 \\
m^{u}_\varepsilon &= \omega(\gamma(t_{0}),\gamma_\varepsilon''(t_1)) -\omega(\gamma(t_{0}),\gamma''(t_1)) + m^u_0\, .
\end{align*}
Thus,
$$
\frac{d}{d\varepsilon} (m_\varepsilon^s-m_\varepsilon^u)|_{\varepsilon = 0} = - \omega({\bf{n}}(t_1),\gamma(t_2)) - \omega(\gamma(t_{0}),{\bf{n}}(t_1)) = \omega( \gamma(t_2) - \gamma(t_{0}),{\bf{n}}(t_1)) \, ,
$$
which is always non-zero because (by the symplectic billiard law) $\gamma(t_2)-\gamma(t_{0}) \perp {\bf{n}}(t_1)$. This shows that, for $\varepsilon > 0$ sufficiently small, $m^{s}_\varepsilon\neq m^{u}_\varepsilon$, i.e. $T_z W^s(x) \oplus T_z W^u(y)=\mathbb{R}^2$ for $T_{\gamma_\varepsilon}$.
\end{proof}



\noindent For $n\geq3$, let now $\mathcal{R}_n \subset \mathcal{S}$ be the set of $\gamma \in \mathcal{S}$ such that:
\begin{enumerate}
\item[$(a)$] $\text{Per}_n(T_\gamma) := \{x\in \mathcal{P}_\gamma \colon T^n_\gamma (x) = x\}$ is finite and all its points are non-degenerate; 
\item[$(b)$] given any two periodic points $x,y\in \text{Per}_n(T_\gamma)$,  the unstable manifold $W^u(x)$ and the stable manifold $W^s(y)$ either do not intersect,  or else intersect at some transverse heteroclinic point;
\item[$(c)$] every elliptic periodic point $x \in \text{Per}_n(T_\gamma)$ is non-resonant up to order 4 and has non-zero first Birkhoff coefficient. In particular --as a consequence of  Moser’s twist theorem-- the map $T^n_\gamma$ has an invariant curve surrounding $x$ and the region bounded by the invariant curve is also invariant.
\end{enumerate}

\begin{proposition}\label{generic Rn}
For every $n\geq3$,  the set $\mathcal{R}_n$ is $C^5$-open and $C^\infty$-dense in $\mathcal{S}$. Consequently:
the set $\mathcal{R} := \bigcap_{n\geq3} \mathcal{R}_n$
is residual in $\mathcal{S}$.
\end{proposition}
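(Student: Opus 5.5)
We prove openness and density separately, following the classical Kupka--Smale scheme for billiards (cf. \cite{DCOKPC07}). Residuality of $\mathcal{R}$ is then automatic, since $C^5$-open sets are also $C^\infty$-open and $\mathcal{S}$ is Baire.

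\emph{Openness.} Fix $\gamma\in\mathcal{R}_n$. By the compactness argument in Proposition~\ref{si sapeva gia}, every $n$-periodic orbit has a point in a fixed compact subcylinder. This bound is uniform for tables $C^2$-close to $\gamma$. Non-degeneracy lets the implicit function theorem continue each point of $\mathrm{Per}_n(T_\gamma)$ to a nearby $\tilde\gamma$. A limit argument, using the uniform compactness, shows that no new $n$-periodic points appear for $\tilde\gamma$ close enough. Hence $(a)$ is $C^2$-open.

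Hyperbolic points stay hyperbolic and elliptic points stay elliptic. The non-resonance conditions $q\theta\notin2\pi\mathbb{Z}$, $q\le 4$, are open. The first Birkhoff coefficient depends continuously on the $4$-jet of $T^n_\gamma$ at the point, hence on the $5$-jet of $\gamma$ (since $T_\gamma$ is $C^{k-1}$). This is why the $C^5$ topology appears, and it makes $(c)$ $C^5$-open.

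For $(b)$ we use that there are finitely many pairs $(x,y)$. We also use that $\mathcal{P}_\gamma$ is a surface on which $T_\gamma$ preserves area. Each non-empty intersection $W^u(x)\cap W^s(y)$ is then determined by compact fundamental domains of the local manifolds. These fundamental domains depend continuously in $C^1$ on the table, so transversality on compact pieces persists. The delicate point is to exclude non-transverse intersections appearing from infinity, or intersections appearing where there were none before. We handle this as in the classical case: transverse intersections on a compact fundamental domain generate all intersections up to iteration, and the $\lambda$-lemma controls the remaining pieces.

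\emph{Density.} Let $\gamma\in\mathcal{S}$ and a $C^\infty$ neighbourhood be given. First we move $\gamma$ into $\mathcal{T}_1\cap\mathcal{T}_3\cap\mathcal{T}_4\cap\mathcal{T}_5$, which is residual, hence dense. Then $(a)$ holds by Corollary~\ref{finite and nondeg}, and the finitely many elliptic points of period $n$ are non-resonant.

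Next we apply Theorem~\ref{marzo} successively to each elliptic orbit. Each perturbation is supported near one bounce point $t_1$, in an interval avoiding all other period-$n$ orbits. This is possible by Theorem~\ref{T2}. Because the perturbation has contact of order $4$ at $t_1$, it keeps all periodic orbits and their linearizations unchanged. We obtain $(c)$ without destroying $(a)$.

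Finally, for each pair $(x,y)$ with a non-transverse intersection we use Proposition~\ref{prop:transversality}. The perturbation has second order contact, so the previously established properties are preserved. Properties $(a)$ and $(c)$ are open, so they survive the small perturbation.

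\emph{Main obstacle.} The hardest step is making \emph{all} heteroclinic intersections transverse. Proposition~\ref{prop:transversality} only makes the intersection transverse at one chosen point $z$. We would work with a compact fundamental domain $D\subset W^u_{loc}(x)$ and perturb near finitely many orbits, chosen so that the intersection of $D$ with $W^s_{loc}(y)$ becomes transverse, i.e. finite and transverse. Every intersection point has an orbit meeting $D$, so this gives $(b)$.

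To make the non-transverse set in $D$ finite, we first use a generic one-parameter family of perturbations supported near the boundary points of $D$, as in \cite{DCOKPC07}, together with a Sard-type argument. Only after that do we apply Proposition~\ref{prop:transversality} point by point. If Sard fails here because of loss of smoothness, the fallback is a parametric transversality argument over a finite-dimensional family of bumps $\mu_i$.
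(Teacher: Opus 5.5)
Your treatment of $(a)$ and $(c)$ is essentially the paper's argument: openness via the implicit function theorem and $C^5$-continuity of the first Birkhoff coefficient, and density via Corollary~\ref{finite and nondeg}, Theorem~\ref{nonresonant} and Theorem~\ref{marzo} applied orbit by orbit, with supports avoiding the other period-$n$ orbits. You are also more careful than the paper in first moving into $\mathcal{T}_1\cap\mathcal{T}_5$, which Theorem~\ref{marzo} and Proposition~\ref{prop:transversality} require.

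The problem is $(b)$, and it starts with a misreading. Property $(b)$ does not ask that every point of $W^u(x)\cap W^s(y)$ be transverse. It only asks that, whenever this intersection is non-empty, it contain \emph{some} transverse heteroclinic point. That is exactly what Proposition~\ref{prop:transversality} produces. So the paper's proof of $(b)$ is just your penultimate step. For each of the finitely many pairs, pick one heteroclinic point $z$ whose bounce point is not on the orbits of $x$ and $y$; this is possible since there are infinitely many heteroclinic points. Apply the proposition once, use disjoint supports, and invoke persistence of transverse crossings for openness \cite{Robinson70}. The paper's closing phrase ``every heteroclinic intersection \dots\ is transverse'' has to be read in this weak sense. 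Your ``main obstacle'' is not part of the statement.

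The arguments you give for the strong reading would also fail, so as written they are a genuine flaw rather than harmless extra work.

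\emph{Openness.} Take a compact fundamental domain $D\subset W^u_{\mathrm{loc}}(x)$. The set $D\cap W^s(y)$ is in general infinite: for $x=y$, a single transverse homoclinic point already yields infinitely many homoclinic orbits, through the horseshoe and the $\lambda$-lemma. Its points lie on pieces $T_\gamma^{-k}(W^s_{\mathrm{loc}}(y))$ with $k$ unbounded. The $C^1$-dependence of the invariant manifolds on the table is only uniform on pieces of bounded length. So nothing prevents arbitrarily small perturbations from creating tangencies between long pieces. This is why the Kupka--Smale theorem truncates to pieces of length at most $N$ and obtains only residuality, not openness; the $\lambda$-lemma does not repair this.

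\emph{Density.} The same point breaks your reduction. The orbit of an intersection point does meet $D$, but at that iterate the point is generally not in $W^s_{\mathrm{loc}}(y)$. Making $D\cap W^s_{\mathrm{loc}}(y)$ finite and transverse therefore leaves infinitely many intersections uncontrolled. The Sard/parametric-transversality step meant to handle them is only sketched, with a fallback.

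Replace all of this by the weak form of $(b)$.
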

\begin{proof}
Fix $n\ge 3$. That $\mathcal R_n$ is $C^5$-open follows from the continuity of the map $\gamma \mapsto T_\gamma$, together with the fact that --upon parametrizing $\gamma$ via its support function-- the symplectic billiard map $T_\gamma$ is a $C^\infty$ map on the cylinder $\mathbb S \times [0,\pi]$, and the persistence results for non-degenerate periodic points, transverse heteroclinic intersections, and elliptic points with non-vanishing first Birkhoff coefficient in conservative dynamical systems (see, for example, \cite[Theorem 1B, Theorem 3, Theorem 9]{Robinson70}). \\
\noindent It remains to prove that $\mathcal R_n$ is $C^\infty$-dense. Let $\gamma\in\mathcal S$ and let $U$ be a $C^\infty$-neighbourhood of $\gamma$. By Theorem \ref{Nondeg} and Corollary \ref{finite and nondeg}, after an arbitrarily small perturbation inside $U$, we may assume that all $n$-periodic points are non-degenerate and there are only finitely many periodic points. Hence, we get property (a). By Theorem \ref{nonresonant}, we may further assume that every elliptic $n$-periodic point is non-resonant (in particular, non-resonant up to order 4). Next, applying Theorem \ref{marzo} to each elliptic $n$-periodic orbit, and using the finiteness of $\mathrm{Per}_n(T_\gamma)$, we obtain an arbitrarily small perturbation inside $U$ for which every elliptic $n$-periodic point has non-zero first Birkhoff coefficient. Hence, property (c) holds. \\
\noindent To obtain property (b), we use Proposition \ref{prop:transversality}. Since there are only finitely many $n$-periodic points, only finitely many pairs of stable and unstable manifolds have to be considered. For each such pair of stable and unstable manifolds $W^s(x)$ and $W^u(y)$, either they do not intersect, or else they intersect at some heteroclinic point $z$. Since there are infinitely many heteroclinic points, we may assume without loss of generality that the periodic points do not pass through the same point corresponding to $z$ on the boundary of the billiard table. Therefore, by Proposition \ref{prop:transversality}, we obtain an arbitrarily small perturbation of $\gamma$ inside $U$ such that the intersection of perturbed stable and unstable manifolds at the heteroclinic point becomes transverse. Repeating this procedure for each of the finitely many pairs of hyperbolic $n$-periodic points, and choosing the perturbations successively with sufficiently small disjoint supports, we obtain a perturbation of $\gamma$ inside $U$ for which every heteroclinic intersection between hyperbolic $n$-periodic points is transverse. Hence, property (b) holds. \\
\noindent Since all of the above perturbations can be arbitrarily small and performed successively with disjoint supports, we obtain a billiard table $\tilde\gamma\in U$ that simultaneously satisfies (a)-(c). Therefore, $\mathcal R_n$ is $C^\infty$-dense in $\mathcal S$.
\end{proof}
\noindent The genericity of the positive topological entropy is consequence of next Theorem \ref{transverse homoclinic}, the proof of which requires the following result on existence of smooth invariant curves close to the boundaries of the phase space. This result has already been proved in \cite[Theorem 3]{AT}; we present a (slightly different) proof for completeness.
\begin{proposition} \label{tipo KAM}
For every $\gamma \in \mathcal{S}$, arbitrary close to the boundaries of the phase space $P_\gamma$, there exist smooth invariant curves for the symplectic billiard map $T_\gamma$.
\end{proposition}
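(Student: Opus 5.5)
We follow the classical Lazutkin strategy: near each boundary component of $\mathcal{P}_\gamma$ we write $T_\gamma$ in coordinates where it becomes a small perturbation of an integrable twist map, and then apply Moser's invariant curve theorem. By Proposition~\ref{properties}, $T_\gamma$ extends continuously to the closure of $\hat{\mathcal{P}}_\gamma$. The lower boundary $s=\psi_1(t)$ corresponds to the diagonal $t_2=t_1$, where the extended map fixes every point. The upper boundary corresponds to $t_2=t_1^*$; there the extension is $(t,t^*)\mapsto(t^*,t)$, so $T^2_\gamma$ is the identity on it. By point~\ref{punto 4 trsnf affini} of Proposition~\ref{properties}, it suffices to treat the diagonal boundary and then apply the same argument to $T^2_\gamma$ near the other boundary. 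Alternatively, one can use the reversing involution $(t_1,t_2)\mapsto(t_2^*,t_1^*)$-type symmetry of \cite{AT}, which exchanges the two ends.

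\textbf{Step 1: asymptotic expansion near the diagonal.} For $t_2=t_1+\delta$, we expand the generating function $L(t_1,t_2)=\omega(\gamma(t_1),\gamma(t_2))$ in powers of $\delta$ using the Taylor expansion of $\gamma$. We may reparametrize $\gamma$ conveniently, for instance by affine arclength, which is natural since the map is affinely invariant. Modulo terms that vanish on the diagonal and are irrelevant to the dynamics, the leading behaviour of the area functional is $\tfrac{1}{12}\,\omega(\gamma',\gamma''')\delta^3$-type, and in affine arclength this becomes a constant times $\delta^3$ plus higher-order terms involving the affine curvature. From this expansion we obtain $T_\gamma$ in the form
\[
t_2=t_1+\delta+O(\delta^3),\qquad \delta_2=\delta+O(\delta^3),
\]
where $\delta$ is a suitable power-type function of the distance $s-\psi_1(t)$.

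\textbf{Step 2: Lazutkin-type change of coordinates.} Next we introduce coordinates $(x,y)$, with $x$ a normalized affine-arclength parameter and $y\sim\delta$ rescaled. After finitely many steps of a normal-form procedure we aim to write the map as
\[
(x,y)\mapsto\bigl(x+y+O(y^{N}),\ y+O(y^{N+1})\bigr)
\]
with $N\ge 3$ (large $N$ needs $\gamma$ smooth, which holds since $\gamma\in\mathcal{S}$). The change of coordinates is chosen so that the map is area preserving, hence exact symplectic, and twist in $(x,y)$. We then rescale $y=\epsilon\eta$ with $\eta\in[1,2]$, which puts the map into the form of a small perturbation of the integrable twist map $(x,\eta)\mapsto(x+\epsilon\eta,\eta)$. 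Composing an appropriate power of the map, or simply applying Moser's theorem with twist parameter $\epsilon$, we obtain that for all small $\epsilon$ there exist invariant curves that are graphs over $x$ with Diophantine rotation numbers. These curves accumulate at $y=0$, that is, at the boundary $s=\psi_1(t)$.

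\textbf{Step 3 and main obstacle.} Repeating the argument for $T_\gamma^2$ near $s=\psi_2(t)$ gives invariant curves for $T_\gamma^2$. The image of such a curve under $T_\gamma$ is again a curve near the same boundary. Choosing the curve with a single rotation number, the uniqueness of invariant graphs with irrational rotation number yields an invariant curve for $T_\gamma$ itself, or at least an invariant union that bounds an invariant region.

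The main obstacle is Step 2. One has to carry out the expansion of the area generating function to sufficiently high order and verify that the coefficients can be normalized using the affine curvature, exactly as Lazutkin did with $\kappa^{2/3}$. This requires checking that the strong convexity assumption keeps the twist non-degenerate, and that the remainder terms are small enough in the $C^k$ norm required by Moser's theorem.
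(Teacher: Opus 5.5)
Your treatment of the diagonal end follows the Lazutkin/affine-arclength route of Albers--Tabachnikov. The paper simply cites that result, so this part is acceptable in outline. There are two slips. First, the diagonal is the end $s=\psi_2(t)$, not $\psi_1(t)$, because $s=-L_1(t_1,t_2)$ decreases in $t_2$. Second, the segment area is $\tfrac1{12}\omega(\gamma',\gamma'')\delta^3$; with $\gamma'''$ it would vanish identically in affine arclength.

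The genuine gap is the other end, $t_2=t_1^*$. This is exactly where the paper's proof does its work, and your proposal never treats it. Neither reduction you offer is valid:
\begin{itemize}
\item Affine maps commute with $T_\gamma$ and preserve both the diagonal and the pairing $t\mapsto t^*$. So point~4 of Proposition~\ref{properties} cannot carry one end to the other.
\item The involution $(t_1,t_2)\mapsto(t_2^*,t_1^*)$ sends $(t,t)\mapsto(t^*,t^*)$ and $(t,t^*)\mapsto(t,t^*)$, so it preserves each end rather than exchanging them.
\end{itemize}
There is also no reason to expect a symmetry exchanging the ends: near the diagonal, orbits creep along $\Gamma$, while near $t_2=t_1^*$ they bounce back and forth across the table. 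So ``repeat the argument for $T_\gamma^2$'' is not a proof. Your Step~1 expands $\omega(\gamma(t_1),\gamma(t_1+\delta))$, whereas near the second end the relevant equation involves an intermediate point near $t_1^*$. You have not shown even that $T_\gamma$ extends smoothly, with non-degenerate twist, across that end. This is not automatic, since in the $(t,s)$ coordinates the twist $L_{12}(t,t^*)=\omega(\gamma'(t),\gamma'(t^*))$ vanishes there.

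The missing ingredient is the local computation the paper carries out:
\begin{itemize}
\item Parametrize by the tangent angle, $\gamma'(\theta)=\rho(\theta)\mathbf{e}_\theta$, and take $\phi=\theta_2-\theta_1\in(0,\pi)$ as the second coordinate.
\item Solve $\omega(\gamma'(\theta_2),\gamma(\theta_3)-\gamma(\theta_1))=0$ for $\theta_3$ near $\theta_1$ (mod $2\pi$) after dividing by $\theta_3-\theta_1$. The divided difference removes the trivial root $\theta_3=\theta_1$, which the true solution merges with as $\phi\to\pi$.
\item The implicit function theorem then applies with $\partial_{\theta_3}f=\rho/2$ and $\partial_\varepsilon f=\rho$, giving $T_\gamma(\theta_1,\pi-\varepsilon)=(\theta_1+\pi-\varepsilon,\ \pi-\varepsilon+O(\varepsilon^2))$.
\end{itemize}
Thus $T_\gamma$ itself is a twist map, smooth up to the invariant circle $\phi=\pi$; the half-turn is just a constant in the angle. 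An invariant-curve theorem of R\"ussmann type is then applied directly to $T_\gamma$.

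Working with $T_\gamma$ rather than $T_\gamma^2$ also disposes of your Step~3, which is itself shaky. ``Uniqueness of invariant graphs with irrational rotation number'' is not a fact you can invoke without argument. Your fallback, the $T_\gamma$-invariant set $C\cup T_\gamma(C)$ bounding an invariant region, would suffice for the confinement argument in Theorem~\ref{transverse homoclinic}. It does not, however, produce the smooth $T_\gamma$-invariant curves the statement asserts.
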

\begin{proof} Choose as parameter the angle formed by the tangent direction with the vector $(1,0)$. With this choice, 
we have $\gamma'(\theta)=\rho(\theta)\mathbf{e}_\theta$, where $\rho(\theta)$ denotes the radius of curvature at the point $\gamma(\theta)$ and $\mathbf{e}_\theta$ is the unit tangent vector at $\gamma(\theta)$. Following previous convention, we will identify $\mathbb{S} \simeq \mathbb{R}/2\pi\mathbb{Z}$.
Using $\theta$ as the parameter $t$, when describing the phase space $\mathcal{P}_\gamma$, instead of using $s_1=-L_1(\theta_1,\theta_2)$ we use $\phi_1=\theta_2-\theta_1$.
 We obtain $0<\phi_1<\pi$, and the (open) phase space in these coordinates is the cylinder $\mathbb{S} \times (0,\pi)$. We now compute the Taylor expansion of $T_{\gamma}$ near the boundary components corresponding to $\phi_1=0$ and $\phi_1=\pi$. Since the behavior near $\phi_1=0$ is already well understood --see e.g. \cite[Theorem 3]{AT}-- we study $T_\gamma(\theta_1,\pi-\varepsilon)$ close to the other boundary $\phi_1=\pi$. Let $\varepsilon>0$ sufficiently small. By definition:
\[
T(\theta_1,\pi-\varepsilon)=(\theta_2,\phi_2)=(\theta_1+\pi-\varepsilon,\theta_3-\theta_2)\, ,
\]
where $\theta_3$ is determined by the condition
\[
\omega(\gamma'(\theta_2),\gamma(\theta_3)-\gamma(\theta_1))=0\, .
\]
Recall that as $\varepsilon\to 0^+$ we have $T(\theta_1,\pi-\varepsilon)\to (\theta_1+\pi,\pi^-)$, hence $\theta_3\to(\theta_1+2\pi)^-$. We therefore solve the defining equation for $\theta_3$ in a neighborhood of $\theta_1$, and then add $2\pi$ at the end. \\
\noindent We rewrite the equation as
\[
\omega(\gamma'(\theta_1+\pi-\varepsilon),\gamma(\theta_3)-\gamma(\theta_1))=0\, .
\]
To make the equation nondegenerate in $\theta_3$, we divide by $\theta_3-\theta_1$ and obtain
\[
\omega\!\left(\gamma'(\theta_1+\pi-\varepsilon),\frac{\gamma(\theta_3)-\gamma(\theta_1)}{\theta_3-\theta_1}\right)=0\, .
\]
Define:
\[
g(\theta_1,\theta_3):=\frac{\gamma(\theta_3)-\gamma(\theta_1)}{\theta_3-\theta_1}\, 
\]
and extend this by continuity via $g(\theta_1,\theta_1) = \gamma'(\theta_1)$.
Thus the equation for $\theta_3$ can be written as
\begin{equation}\label{theta3}
    f(\theta_1,\varepsilon,\theta_3):=\omega(\mathbf{e}_{\theta_1-\varepsilon},g(\theta_1,\theta_3))=0\, .
\end{equation}
As $\theta_3\to\theta_1$, we have
\[
g(\theta_1,\theta_3)=\gamma'(\theta_1)+\tfrac12\gamma''(\theta_1)(\theta_3-\theta_1)+O((\theta_3-\theta_1)^2),
\]
from which it follows that
\[
\partial_{\theta_3} f(\theta_1,0,\theta_1)
= \tfrac12\, \omega\bigl(\mathbf{e}_{\theta_1},\gamma''(\theta_1)\bigr)
= \tfrac12\, \omega\bigl(\mathbf{e}_{\theta_1},\rho(\theta_1)\mathbf{e}_{\theta_1+\tfrac{\pi}{2}}\bigr)
= \tfrac{\rho(\theta_1)}{2}\, .
\]
By the Implicit Function theorem, the relation \eqref{theta3} defines a 
smooth function $\theta_3(\theta_1,\varepsilon)$. Furthermore,
\[
\partial_\varepsilon f(\theta_1,0,\theta_1) = \rho(\theta_1)\, ,
\]
hence
\[
\partial_\varepsilon \theta_3(\theta_1,0)=-2\, .
\]
Therefore, applying Taylor’s theorem and then adding $2\pi$ to take into account the periodicity, we obtain
\[
\theta_3=\theta_1+2\pi-2\varepsilon+O(\varepsilon^2)\, ,
\]
so that
\[
\phi_2=\theta_3-\theta_2=\pi-\varepsilon+O(\varepsilon^2)\, .
\]
Consequently,
\[
T(\theta_1,\pi-\varepsilon)=\bigl(\theta_1+\pi-\varepsilon,\;\pi-\varepsilon+O(\varepsilon^2)\bigr)\, ,
\]
and 
--applying \cite[Theorem 2]{Russmann}-- the proof is
concluded.
\end{proof} 
\begin{remark}
We notice that, as an application of \cite[Theorem 2]{Russmann}, the previous proposition is valid as long as the regularity of the boundary is $C^k$, with $k > 5$.
\end{remark}
\begin{theorem}\label{transverse homoclinic}
For every $\gamma \in \mathcal{R}$, every hyperbolic periodic point of $T_\gamma$ has a transverse homoclinic point.
\end{theorem}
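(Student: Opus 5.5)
Fix $\gamma \in \mathcal{R}$ and a hyperbolic periodic point $x$ of $T_\gamma$ of period $n$; since $\gamma\in\mathcal{R}_n$, property $(b)$ makes it enough to show $W^u(x)\cap W^s(x)\neq\emptyset$ (for $n\le 2$ replace $n$ by a multiple $\ge 3$ and use $\mathcal{R}_{kn}$). Transversality is then automatic. I will follow the classical Pixton/Robinson argument for generic area-preserving twist maps on an annulus, as in the Birkhoff billiard case \cite{DCOKPC07}.

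First I restrict to a compact invariant annulus. By Proposition~\ref{tipo KAM} there are smooth invariant curves $\mathcal{C}_0,\mathcal{C}_\pi$ arbitrarily close to the two boundary components of $\mathcal{P}_\gamma$. I choose them so that the finite orbit of $x$ lies in the open annulus $\mathcal{A}$ between them. Then $\mathcal{A}$ is $T_\gamma$-invariant, has finite $ds\wedge dt$-area, and $T_\gamma^n$ preserves each boundary curve. The branches of $W^u(x)$ and $W^s(x)$ stay inside $\mathcal{A}$.

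Next I take a branch $\ell^u$ of $W^u(x)$ and its closure $K=\overline{\ell^u}$. Using $(a)$ and $(c)$, I show the closures of branches are large. Suppose $\ell^u$ never meets $W^s$ of any point of $\mathrm{Per}_n(T_\gamma)\cap\mathcal{A}$. Area preservation and finite area (Poincaré recurrence) force $K$ to accumulate on itself. Mather-type prime-end arguments, as used by Pixton and Oliveira, then give one of two outcomes. Either $K$ contains a periodic point, which is impossible without a connection to its stable manifold by $(b)$ and the hyperbolicity/ellipticity dichotomy of $(a)$, $(c)$. Or the complement of $K$ contains an invariant region whose prime-end rotation number is rational. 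Elliptic points have KAM curves by $(c)$, which bound invariant disks. Such disks cannot intersect $\ell^u$ and can only be bounded by curves avoiding hyperbolic points. This is the standard contradiction.

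The concrete route I would take is the one in Robinson's/Oliveira's theorem: \emph{for a $C^r$ area-preserving diffeomorphism of a compact annulus (or sphere after collapsing boundaries) whose periodic points are all nondegenerate, elliptic ones Moser-stable, and all heteroclinic intersections transverse, every hyperbolic point has a transverse homoclinic point.} I need $T_\gamma|_{\mathcal{A}}$ to satisfy these hypotheses for the relevant period. I would reduce to this by collapsing each boundary curve $\mathcal{C}_0,\mathcal{C}_\pi$ to a point to get a sphere map, and check the hypotheses via $\mathcal{R}=\bigcap_n\mathcal{R}_n$. Invariance of the KAM curves under $T_\gamma$ itself, with $T_\gamma$ twist, gives the needed irrational-rotation boundary behaviour, so no new periodic points appear at the collapsed points.

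The main obstacle is the topological step: excluding the case where $\overline{\ell^u}$ bounds an invariant region not reaching $W^s(x)$. I would handle it with Mather's theorem on prime ends. Such a region's boundary prime-end rotation number must be irrational, since rational values produce periodic points that are ruled out by genericity. Its boundary is then accumulated both by $\ell^u$ and by a stable branch of $x$, because by area preservation the stable branch can accumulate on the same set. This yields an intersection $W^u(x)\cap W^s(x)\neq\emptyset$. By property $(b)$ of $\mathcal{R}_n$, that intersection is transverse, which concludes.
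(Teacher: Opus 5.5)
Your proof has the same architecture as the paper's. Proposition~\ref{tipo KAM} traps the invariant manifolds of $x$ in a compact invariant annulus. A genus-zero theorem for Kupka--Smale area-preserving maps with Moser-stable elliptic points then gives $W^u(x)\cap W^s(x)\neq\emptyset$, and $(b)$ supplies transversality.

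You differ from the paper in the black box. The paper runs the Xia--Zhang argument, which was written for convex billiard maps on an annulus, directly on the annulus, with no surgery on the phase space:
every branch is recurrent or part of a chain of saddle connections \cite[Prop.~3.3]{XZ14}; saddle connections are excluded by $(b)$; recurrence produces an adjacent accumulating pair of stable and unstable branches (Prop.~4.2); and such a pair must intersect (Thm.~4.3).

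You instead collapse $\mathcal{C}_0,\mathcal{C}_\pi$ and invoke the Pixton/Robinson sphere theorem. That can be made to work, but not as you state it. The collapsed points \emph{are} new fixed points, of a homeomorphism rather than a diffeomorphism, so ``no new periodic points appear'' is false and the sphere theorem does not apply verbatim. You need these points to behave like Moser-stable elliptic points, i.e.\ to have arbitrarily small invariant disk neighbourhoods. For a collapsed curve this means choosing $\mathcal{C}_0,\mathcal{C}_\pi$ accumulated from inside $\mathcal{A}$ by further invariant curves, or capping off with a disk on which the map is a rotation. You must then check that the sphere argument uses nothing more at those points.

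The part you write out yourself, however, is not a proof and is wrong in its details. Since $x\in K=\overline{\ell^u}$, the alternative ``$K$ contains a periodic point'' cannot be the impossible one as you state it. Genericity does not exclude periodic points on the boundary of an invariant domain with rational prime-end rotation number; it excludes the saddle connections they would force.

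Above all, ``$\ell^u$ and a stable branch of $x$ accumulate on the same set, hence they intersect'' is exactly the nontrivial content of the theorem. You need two things:
\begin{itemize}
\item that the branches of $x$ accumulate on \emph{each other} in an adjacent configuration (Mather's equal-closures result, or recurrence plus Prop.~4.2 of \cite{XZ14});
\item a genuine crossing argument turning that accumulation into an intersection (Thm.~4.3 of \cite{XZ14}).
\end{itemize}
``By area preservation the stable branch can accumulate on the same set'' supplies neither. If you import that step from the literature, your argument is on the same footing as the paper's. As written, the step you yourself flag as the main obstacle is asserted, not proved.
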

\begin{proof}
Let $\gamma\in \mathcal{R}$ and $x \in \mathcal{P}_\gamma$ be a hyperbolic periodic point of $T_\gamma$. We first notice that the stable and unstable manifolds of $x$ are contained in a compact region of the phase space. Indeed, $x$ is at a positive distance from the boundaries and --as a consequence of Proposition \ref{tipo KAM}-- there exist invariant curves accumulating on the boundaries. Since the stable and unstable manifolds of $x$ cannot cross any of these curves, they must
stay inside the region bounded by two invariant curves. In particular, they stay away from the boundaries. From this point on, the arguments of Z. Xia and P. Zhang~\cite{XZ14} apply verbatim. More precisely, Proposition 3.3 in \cite{XZ14} implies that every branch of the stable or unstable manifold of $x$ is either recurrent (i.e. either there exists $y\in W^u(x) $ such that $y \in \omega(T_\gamma,y)$ or there exists $z\in W^s(x)$ such that $z \in \alpha(T_\gamma,z)$) or it forms a path of saddle connections (i.e. branches of stable and unstable manifold coincide). This last case is excluded by the generic property $(b)$ of $\mathcal{R}$. Then, Proposition 4.2 in \cite{XZ14} yields the existence of an adjacent accumulating pair of stable and unstable branches. Applying Theorem 4.3 in \cite{XZ14}, we conclude that stable and unstable manifold intersect. Finally --again by the generic transversality property $(b)$ in $\mathcal R$-- $T_\gamma$ has a transverse homoclinic point associated to $x$.  
\end{proof}

\noindent Since a transverse homoclinic point of a hyperbolic periodic point gives rise, by the
Birkhoff--Smale homoclinic theorem, to a hyperbolic invariant set conjugate to a Bernoulli
shift on two symbols (Smale horseshoe), we obtain the following corollary.

\begin{corollary}\label{positive entropy}
For every $\gamma \in \mathcal{R}$, the symplectic billiard map $T_\gamma$ has positive topological entropy.
\end{corollary}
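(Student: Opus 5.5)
The plan is to combine Theorem~\ref{transverse homoclinic} with the Birkhoff--Smale homoclinic theorem. The one thing that must be checked first is that every $\gamma\in\mathcal{R}$ actually has a hyperbolic periodic point. Once such a point exists, Theorem~\ref{transverse homoclinic} gives a transverse homoclinic point, and the horseshoe argument gives positive entropy.

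\textbf{Step 1: existence of a hyperbolic periodic point.} Fix $\gamma\in\mathcal{R}$. By the variational theory of Albers and Tabachnikov (or Aubry--Mather/Birkhoff theory for the twist map $T_\gamma$), for each rotation number $1/n$ with $n\ge3$ there are at least two Birkhoff $n$-periodic orbits. One maximizes the action $L_\sigma$ on configurations of cyclically ordered points. The other is of minimax type and is obtained by the mountain-pass argument between consecutive maximizers. By property $(a)$ of $\mathcal{R}_n$ these orbits are non-degenerate, so $DT^n_\gamma$ at each of them is either elliptic or hyperbolic (possibly with negative eigenvalues). The standard relation between the Morse index of the action at a non-degenerate critical point and the trace of the monodromy (the discrete analogue of the MacKay--Meiss formula $\det\mathcal{H}\propto 2-\mathrm{tr}\,DT^n_\gamma$, read off from the tridiagonal matrix \eqref{matrice}) shows that the sign of $\det\mathcal{H}$ differs between the maximizing and minimax orbits. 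Hence one of them has $|\mathrm{tr}|>2$, i.e.\ it is hyperbolic. If the trace sign computation is delicate, the fallback is the Poincaré--Birkhoff index count: on the invariant annulus between two of the curves of Proposition~\ref{tipo KAM}, the indices of the non-degenerate $n$-periodic points of a given rotation type sum to $0$. Elliptic points have index $+1$, so index $-1$ points, which are hyperbolic with positive eigenvalues, must also exist.

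\textbf{Step 2: transverse homoclinic point and horseshoe.} Let $x$ be a hyperbolic periodic point of period $n$. By Theorem~\ref{transverse homoclinic} it has a transverse homoclinic point $z$. The Birkhoff--Smale theorem then yields $N\ge1$ and a compact set $\Lambda$ invariant under $T^{N}_\gamma$, with $T^N_\gamma|_\Lambda$ topologically conjugate to the full shift on two symbols. This gives
\[
h_{\mathrm{top}}(T_\gamma)\ \ge\ \frac{1}{N}\,h_{\mathrm{top}}(T^N_\gamma|_\Lambda)=\frac{\log 2}{N}>0 .
\]
The phase space $\mathcal{P}_\gamma$ is open, so we restrict $T_\gamma$ to the compact invariant annulus bounded by two invariant curves from Proposition~\ref{tipo KAM}, on which entropy is defined and which contains $\Lambda$.

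The main obstacle is Step~1, namely guaranteeing a hyperbolic orbit rather than only elliptic ones. I would first try the index/trace argument for the minimax Birkhoff orbit, using non-degeneracy from $\mathcal{R}_n$.
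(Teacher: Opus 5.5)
Your proposal is correct. Step 2 matches the paper's reduction (Theorem~\ref{transverse homoclinic} followed by the Birkhoff--Smale horseshoe), so the two arguments differ only in how a hyperbolic periodic point is produced. The paper argues by contradiction: assuming all periodic points are non-hyperbolic and the entropy vanishes, it takes an elliptic point and uses property $(c)$ with Moser's twist theorem to get an invariant disc. It then applies Angenent's criterion to obtain invariant circles of rational rotation number and rules these out via non-degeneracy. You instead get the hyperbolic point directly from the variational structure, using only property $(a)$ and the existence of maximal-area inscribed $n$-gons. This avoids Birkhoff normal forms, Angenent's theorem and the paper's dichotomy for rational invariant sets, and is shorter and more self-contained. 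You can also drop the minimax orbit, and with it the Morse-index-one property of non-degenerate mountain-pass points that your sign comparison tacitly needs. For the periodic Jacobi matrix \eqref{matrice} one has $\det\mathcal{H}=(-1)^n\bigl(\prod_{i=1}^n l_i\bigr)\bigl(\mathrm{tr}\,DT^n_\gamma-2\bigr)$ with $l_i=L_{12}(t_{i-1},t_i)>0$. At a non-degenerate maximum of the area $\mathcal{H}$ is negative definite, so $\det\mathcal{H}$ has sign $(-1)^n$ and $\mathrm{tr}\,DT^n_\gamma>2$. Thus the maximal-area Birkhoff orbit is itself hyperbolic with positive eigenvalues. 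This is the analogue of the fact that non-degenerate minimizers of twist maps are hyperbolic; maxima play that role here because $L_{12}>0$. Your Lefschetz fallback also works, and needs no splitting by rotation class. $T^n_\gamma$ on an invariant annulus bounded by irrational invariant circles is homotopic to the identity and has no fixed points on the boundary, so the indices of its finitely many fixed points sum to $\chi=0$. Elliptic and reflection-hyperbolic points have index $+1$ and fixed points exist, so a fixed point of index $-1$ (i.e.\ with $\mathrm{tr}>2$) must occur. The paper's route stays within local dynamical tools near an elliptic point and does not need the determinant--trace identity, but it uses more of the generic structure of $\mathcal{R}$ and heavier external results.
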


\begin{proof}
First of all,  notice that,  by Birkhoff theorem, $T_\gamma$ has an infinite number of periodic points. 
By Theorem~\ref{transverse homoclinic},  we just need to show that, among the infinite number of periodic points,  there is one that is hyperbolic.  Suppose,  by contradiction,  that all periodic points of $T_\gamma$ are non-hyperbolic and that $T_\gamma$ has zero topological entropy. Because $\gamma\in \mathcal R$, $T_{\gamma}$ has a stable elliptic periodic point $x$ with non-zero Birkhoff coefficient. Let $g := T_{\gamma}^n$ where $n \geq 3$ is the period of $x$. By Moser's twist theorem~\cite[Theorem 2.11]{M73}, in a neighborhood of $x$, there exists a Cantor family of invariant curves surrounding $x$. Choosing one such curve $\Upsilon$, the region it bounds is invariant under $g$. In suitable coordinates, this region, with the fixed point $x$ of $g$ removed, can be identified with an annulus on which $g$ acts as an area-preserving twist homeomorphism, preserving the boundary components. Let $\rho$ denote the rotation number of $g$ on $\Upsilon$; the rotation interval of $g$ on the annulus is then $[0,\rho]$. By a criterion due to Angenent~\cite[Theorem A]{Angenent1992}, if $T_\gamma$ has zero topological entropy, then $g$ has an essential invariant curve with rotation number $\alpha$ for every $\alpha \in [0,\rho]$. Fix a rational rotation number $\alpha = p/q$ in this interval. The invariant set with rotation number $\alpha$ is either a curve consisting entirely of $q$-periodic points, or a finite union of hyperbolic periodic points together with heteroclinic connections between them. The first case cannot clearly occur, since all periodic points on the curve would be degenerate, contradicting the fact that periodic points of $T_\gamma$ are non-degenerate. Therefore, we are left with the second case: $T_\gamma$ has a hyperbolic periodic point. This is also a contradiction and concludes the proof.
\end{proof}

\bibliographystyle{plain}
\bibliography{biblio.bib}

\end{document}